\numberwithin{equation}{section}
\theoremstyle{plain}
\newtheorem{theorem}{Theorem}[section]
\newtheorem{lemma}[theorem]{Lemma}
\newtheorem{proposition}[theorem]{Proposition}
\newcommand{\inth}{\textstyle \int}
\theoremstyle{definition}
\newtheorem{definition}[theorem]{Definition}
\newtheorem{example}[theorem]{Example}
\newtheorem{remark}[theorem]{Remark}
\let\c@equation\c@theorem  
\newcommand{\rank}{\operatorname{rank}}
\DeclareMathOperator{\hdet}{hdet}
\DeclareMathOperator{\Ext}{Ext} \DeclareMathOperator{\Tor}{Tor}
\DeclareMathOperator{\gr}{gr}
\DeclareMathOperator{\Aut}{Aut}
\DeclareMathOperator{\AutSL}{SLA}
\DeclareMathOperator{\GKdim}{GKdim}
\newcommand{\cal}{\mathcal}
\begin{document}

\title[Invariant theory on down-up algebras]
{Invariant theory of finite group actions \\
on down-up algebras}

\author{E. Kirkman, J. Kuzmanovich and J.J. Zhang}

\address{Kirkman: Department of Mathematics,
P. O. Box 7388, Wake Forest University, Winston-Salem, NC 27109}

\email{kirkman@wfu.edu}

\address{Kuzmanovich: Department of Mathematics,
P. O. Box 7388, Wake Forest University, Winston-Salem, NC 27109}

\email{kuz@wfu.edu}

\address{zhang: Department of Mathematics, Box 354350,
University of Washington, Seattle, Washington 98195, USA}

\email{zhang@math.washington.edu}

\begin{abstract}
We study Artin-Schelter
Gorenstein fixed subrings of some Artin-Schelter regular
algebras of dimension 2 and 3 under a finite group action
and prove a noncommutative version of the Kac-Watanabe and Gordeev
theorem for these algebras.
\end{abstract}

\subjclass[2010]{16W22, 16E65, 16S38}




\keywords{Artin-Schelter regular algebra, Gorenstein algebra,
complete intersection, group action, trace, Hilbert series,
fixed subring}


\maketitle

\setcounter{section}{-1}
\section{introduction}
\label{zzsec0}

This paper continues our project of extending classical invariant 
theory to a noncommutative context.  Throughout, let $A$ be an 
Artin-Schelter regular (AS regular, for short) algebra over an 
algebraically closed field $k$ of characteristic zero, let $\Aut(A)$ 
be the group of graded automorphisms of $A$, and let $H$ be a 
finite subgroup of $\Aut(A)$. Several fundamental results in 
classical invariant theory have been
generalized to this noncommutative setting; for example, see
\cite[Theorem 1.1]{KKZ3} for a version of the Shephard-Todd-Chevalley
Theorem, and \cite[Theorem 0.2]{KKZ2} for a version of Watanabe's
theorem.

Our interest here is in studying AS Gorenstein fixed rings $A^H$ of 
$A$.  We have chosen to study the parametrized family $A(\alpha, \beta)$ 
of $\mathbb{N}$-graded down-up algebras.  Down-up algebras were 
introduced by Benkart and Roby \cite{BR} in 1998 as a tool to study 
the structure of certain posets. Since then various aspects of these 
algebras have been studied extensively, for example, \cite{BW, KK, KMP}. 
Noetherian graded down-up algebras form a class of AS regular algebras 
of global dimension three that are generated in degree 1 by two 
elements, with two cubic relations; these algebras are not Koszul, but 
they are (3)-Koszul.  Their graded automorphism groups, which depend 
upon the parameters $\alpha$ and $\beta$, were computed in \cite{KK}, 
and are sufficiently rich to provide many non-trivial examples (e.g. 
in two cases the automorphism group is the entire group $GL_2(k)$).

To accomplish our study of down-up algebras, we first study AS 
Gorenstein fixed subrings of AS regular algebras of global dimension two. 
In Section \ref{zzsec2} we prove the following extension of a 
commutative result \cite[Proposition 9.4]{St2} by essentially 
classifying all AS Gorenstein fixed subrings of AS regular algebras of 
dimension two.

\begin{theorem}
\label{zzthm0.1}
Let $A$ be a noetherian AS regular algebra of global dimension two
that is generated in degree one. Let $H$ be a finite subgroup of
$\Aut(A)$. Then the following are equivalent.
\begin{enumerate}
\item
$A^H$ is AS Gorenstein.
\item
$A^H$ is isomorphic to $C/(\Omega)$, where $C$ is a noetherian AS
regular algebra of global dimension three, and $\Omega$ is a
regular normal element of $C$. {\rm{(}}Such an algebra $C/(\Omega)$ is
called a hypersurface.{\rm{)}}
\end{enumerate}
\end{theorem}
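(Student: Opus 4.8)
\emph{Strategy and the easy direction.} I would first reduce to a concrete list of algebras $A$: a noetherian AS regular algebra of global dimension two generated in degree one is, up to isomorphism, either a quantum plane or the Jordan plane, and in either case $\Aut(A)$ embeds into $\GL(A_1) = \GL_2(k)$. The implication (2) $\Rightarrow$ (1) is the easy direction: if $C$ is AS regular of global dimension three and $\Omega \in C$ is a regular normal element of positive degree, then a Rees-type change-of-rings argument shows that $C/(\Omega)$ is AS Gorenstein of dimension two, and I would simply cite this standard fact. Thus the real content of the theorem lies in the forward implication (1) $\Rightarrow$ (2).

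\emph{The Hilbert series and the $SL$-condition.} For (1) $\Rightarrow$ (2) the first step is to compute the Hilbert series of $A^H$. Using the graded trace (Molien) formula for AS regular algebras, for each $h \in H$ one has $\tr(h,A,t) = 1/\det(1 - t\,h|_{A_1})$, whence
\[
H_{A^H}(t) = \frac{1}{|H|}\sum_{h \in H} \frac{1}{\det(1 - t\,h|_{A_1})},
\]
which is \emph{formally identical} to the classical Molien series of the commutative invariant ring. Next I would invoke the noncommutative Watanabe theorem \cite[Theorem 0.2]{KKZ2} together with the noncommutative Shephard--Todd--Chevalley theorem \cite[Theorem 1.1]{KKZ3}: after factoring out the subgroup generated by the reflections of $H$ (which replaces $A$ only by another AS regular algebra), the AS Gorenstein hypothesis on $A^H$ forces $\hdet h = 1$ for every $h \in H$, that is, $H \subseteq \ker\hdet$. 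Since $\hdet h = \det(h|_{A_1})$ in global dimension two, this says $H$ acts on $A_1$ with determinant one, so $H_{A^H}(t)$ agrees with the Hilbert series of a classical Kleinian (ADE) hypersurface and is palindromic, reflecting the Gorenstein symmetry.

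\emph{Constructing the hypersurface.} It remains to produce the presentation $A^H \cong C/(\Omega)$. From the Hilbert series above I would read off that $A^H$ is minimally generated by three homogeneous elements and carries a single essential defining relation $\Omega$ beyond its commutation relations. Collecting those commutation relations into a connected graded algebra $C$ on three generators, the plan is to show that $C$ is AS regular of global dimension three --- by matching its Hilbert series to $1/\prod_i (1 - t^{d_i})$ and checking that it appears on the classified list of such algebras --- and that $\Omega$ is a regular normal element of $C$ with $A^H = C/(\Omega)$. \textbf{The main obstacle} is exactly this last step: the coincidence of Hilbert series with the commutative hypersurface does not by itself yield the noncommutative hypersurface structure, so one must exhibit $C$ and $\Omega$ explicitly. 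I expect this to require running through the finite subgroups $H \subseteq \Aut(A) \subseteq \GL_2(k)$ with trivial homological determinant --- whose classification is available from \cite{KK} --- to pin down the generators and relations of $A^H$, and to identify the overlying AS regular algebra $C$ and the normal element $\Omega$ in each case.
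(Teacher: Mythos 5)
Your overall skeleton --- Rees's lemma for (2) $\Rightarrow$ (1), reduction of $A$ to the quantum and Jordan planes, a Watanabe-type reduction via the reflection subgroup, and then an explicit case-by-case construction of $C$ and $\Omega$ --- matches the paper's. But your reduction step contains a genuine error. What \cite[Theorem 0.2]{KKZ2} gives (the paper's Lemma \ref{zzlem2.2}) is that $A^H$ is AS Gorenstein if and only if the \emph{induced action of $H/R$ on $A^R$} has trivial homological determinant, where $R$ is the subgroup generated by reflections. It does \emph{not} follow that $\hdet h=1$ for every $h\in H$: a reflection such as $x\mapsto -x$, $y\mapsto y$ on $k[x,y]$ has $\hdet=-1$, yet the fixed ring $k[x^2,y]$ is AS regular, hence AS Gorenstein. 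So the claim $H\subseteq\ker\hdet$ is false, and with it the conclusion that $H_{A^H}(t)$ is the Molien series of a Kleinian singularity. The object one must actually analyze is $B=A^R$ with the $H/R$-action; $B$ is again AS regular of dimension two but is in general \emph{not} generated in degree one (its generators carry weights), so the identity $\hdet=\det|_{A_1}$ and the appeal to Klein's $SL_2(k)$ classification no longer apply as stated --- handling the weighted case, where automorphisms can take the triangular form $X\mapsto aX$, $Y\mapsto bY+cX^v$, is precisely the content of the paper's Lemmas \ref{zzlem2.4} and \ref{zzlem2.5}.

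The construction step, which you correctly flag as the main obstacle, is also not a bookkeeping exercise that can be ``read off'' from the Hilbert series. The assertion that $A^H$ is minimally generated by three homogeneous elements with one essential relation is unjustified and sometimes false (e.g.\ $A^H=k[x^2,y]$ above needs two generators and no relation; see also Remark \ref{zzrem2.6}, where two generators suffice). The paper's proof instead runs through the four isomorphism classes $k_J[x,y]$, $k_q[x,y]$ with $q\neq\pm1$, $k[x,y]$ and $k_{-1}[x,y]$, writes down explicit generators and relations for $A^H$ in each case, exhibits the AS regular algebra $C$ (as an iterated Ore extension or skew polynomial ring) together with the normal element $\Omega$, and confirms $A^H\cong C/(\Omega)$ by comparing Hilbert series across a constructed surjection; for $k_{-1}[x,y]$ it further needs the classification from \cite{CKWZ} rather than Klein's. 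Until both the corrected reduction and this explicit case analysis are carried out, the proposal does not constitute a proof of (1) $\Rightarrow$ (2).
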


For down-up algebras, two fundamental results of classical invariant theory,
the Shephard-Todd-Chevalley Theorem and Watanabe's Theorem, hold in the
following way.

\begin{proposition}
\label{zzpro0.2} Let $A$ be a noetherian graded down-up algebra
and let $\{1\}\neq H$ be a finite subgroup of $\Aut(A)$.
\begin{enumerate}
\item \cite[Proposition 6.4]{KKZ1}
$A$ has no quasi-reflections of finite order. As a consequence,
$A^H$ is not AS regular, and all finite groups of graded automorphisms
are ``small".
\item
\cite[Corollary 4.11]{KKZ2}
$A^H$ is AS Gorenstein if and only if the homological determinant
of the $H$-action is trivial.
\end{enumerate}
\end{proposition}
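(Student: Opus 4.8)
The plan is to obtain both statements by assembling the two cited inputs, \cite[Proposition 6.4]{KKZ1} and \cite[Corollary 4.11]{KKZ2}, so that the actual work reduces to two things: recording the trace computation that produces the no-quasi-reflection statement, and checking that the hypotheses needed to invoke the noncommutative Shephard--Todd--Chevalley theorem \cite[Theorem 1.1]{KKZ3} and the noncommutative Watanabe theorem \cite[Theorem 0.2]{KKZ2} hold for a noetherian graded down-up algebra (noetherianity, AS regularity of global dimension $3$, and the Cohen--Macaulay property, all of which are standard for $A=A(\alpha,\beta)$).

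For part (1), I would begin from the description of $\Aut(A)$ in \cite{KK}: a graded automorphism $g$ is determined by $g|_{A_1}\in GL(A_1)\cong GL_2(k)$. Fix $g$ of finite order; since $\ch k=0$, diagonalize $g|_{A_1}$ with eigenvalues $\lambda,\mu$. The minimal free resolution of the trivial module, $0\to A(-4)\to A(-3)^2\to A(-1)^2\to A\to k\to 0$, can be chosen $g$-equivariantly, and its $g$-equivariant Euler characteristic yields the trace series
\[
\operatorname{Tr}_A(g,t)\;=\;\frac{1}{(1-\lambda t)(1-\mu t)(1-\lambda\mu\,t^{2})},
\]
which specializes to $H_A(t)=\frac{1}{(1-t)^2(1-t^2)}$ at $g=\id$. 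Since $g$ is a quasi-reflection exactly when $\operatorname{Tr}_A(g,t)$ has a pole of order $\GKdim A-1=2$ at $t=1$, I then observe that the three denominator factors vanish at $t=1$ precisely when $\lambda=1$, $\mu=1$, and $\lambda\mu=1$, and that any two of these conditions force the third. Hence the number of vanishing factors, and so the order of the pole at $t=1$, lies in $\{0,1,3\}$ but is never $2$: there are no quasi-reflections of finite order. The consequences are then formal. As $H\neq\{1\}$ contains no quasi-reflection, it is not generated by quasi-reflections, so \cite[Theorem 1.1]{KKZ3} gives that $A^H$ is not AS regular, and ``small'' is simply a restatement that $H$ contains no quasi-reflection.

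For part (2), the direction ``$\hdet|_H$ trivial $\Rightarrow A^H$ is AS Gorenstein'' is the noncommutative Watanabe theorem \cite[Theorem 0.2]{KKZ2}, applicable since $A$ is noetherian AS regular and Cohen--Macaulay. The converse is exactly where part (1) is used: in the absence of quasi-reflections the Watanabe implication sharpens to an equivalence, which is the content of \cite[Corollary 4.11]{KKZ2}, so $A^H$ being AS Gorenstein forces $\hdet|_H$ to be trivial.

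The main obstacle, and essentially the only step needing care, is justifying the trace-series formula uniformly across the family $A(\alpha,\beta)$: one must verify that the minimal resolution of $k$ is $g$-equivariant and correctly read off the weight of $g$ on each term, in particular that the top term transforms by $(\lambda\mu)^2$ (this weight also computes $\hdet g$, up to the chosen convention, which is what feeds into part (2)). For the parameter values in which $\Aut(A)$ is as large as $GL_2(k)$ this must hold for an arbitrary finite-order $g$; but finiteness of order in characteristic zero guarantees diagonalizability of $g|_{A_1}$, so the computation above covers every case and no separate treatment of unipotent elements is required.
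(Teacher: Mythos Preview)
The paper does not supply its own proof of this proposition; both parts are quoted directly from the cited references \cite[Proposition 6.4]{KKZ1} and \cite[Corollary 4.11]{KKZ2}, and the paper later records the trace formula and $\hdet g=(\det g)^2$ separately as Lemma~\ref{zzlem7.3}(1,2) (again by citation to \cite{KK}). Your reconstruction is correct and is exactly the argument those references contain: the $g$-equivariant minimal resolution $0\to A(-4)\to A(-3)^2\to A(-1)^2\to A\to k\to 0$ gives the denominator $(1-\lambda t)(1-\mu t)(1-\lambda\mu t^2)$, and your ``any two force the third'' observation is precisely the obstruction to a pole of order $2$ at $t=1$.

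One small citation issue: for the step ``$H$ contains no quasi-reflection $\Rightarrow A^H$ is not AS regular'' you invoke \cite[Theorem 1.1]{KKZ3}, but that theorem is stated for skew polynomial rings $k_{p_{ij}}[x_1,\dots,x_n]$, not for down-up algebras. The direction you need (if $A^G$ is AS regular then $G$ is generated by quasi-reflections) holds more generally and is \cite[Proposition 2.5(b)]{KKZ3}; this is the reference the present paper itself uses for that implication (see the proof of Proposition~\ref{zzpro6.10}). With that substitution your argument is complete.
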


In studying AS Gorenstein invariant subrings we are interested in a 
noncommutative version of the classical
result of Kac-Watanabe \cite{KW} and Gordeev \cite{G1} that states that,
if the fixed subring $k[x_1,\cdots,x_n]^H$ (for any finite subgroup
$H \subset GL_n(k)$) is a complete intersection, then $H$ is
generated by bireflections (i.e., elements $g\in GL_n(k)$ such that
$\rank (g-I)\leq 2$). In Section \ref{zzsec5} we will provide 
suitable definitions of complete intersection and bireflection in 
the noncommutative setting. In dimension 2, by Theorem \ref{zzthm0.1} 
all AS Gorenstein invariant subrings are hypersurfaces, and all
automorphisms of finite order are trivially bireflections, and hence 
the first interesting case of the Kac-Watanabe/Gordeev theorem is in 
dimension 3, so that it is natural to investigate generalizations of 
this theorem for down-up algebras. We prove the following version of 
Kac-Watanabe/Gordeev theorem for noetherian graded down-up algebras.

\begin{theorem}
\label{zzthm0.3} Let $A$ be a connected graded noetherian down-up
algebra and $H$ be a finite subgroup of $\Aut(A)$. Then the
following are equivalent.
\begin{enumerate}
\item[(C1)]
$A^H$ is a complete intersection {\rm{(}}of type GK{\rm{)}}.
\item[(C2)]
$A^H$ is cyclotomic Gorenstein and $H$ is generated by
bireflections.
\item[(C3)]
$A^H$ is cyclotomic Gorenstein.
\end{enumerate}
\end{theorem}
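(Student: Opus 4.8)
The plan is to prove the equivalences by establishing the chain (C1)$\Rightarrow$(C2)$\Rightarrow$(C3)$\Rightarrow$(C1), leveraging the concrete classification of noetherian graded down-up algebras together with the explicit description of their graded automorphism groups from \cite{KK}. Since these down-up algebras are AS regular of global dimension three, generated in degree one with two cubic relations, the Hilbert series of $A$ is $(1-t)^{-2}(1-t^2)^{-1}$ up to the standard normalization, and the fixed ring $A^H$ has Hilbert series computable via a Molien-type formula $H_{A^H}(t)=\frac{1}{|H|}\sum_{g\in H}\frac{1}{\det(I-t\,g|_{A_1})\cdot(\text{cubic correction})}$. First I would make the notion of "cyclotomic Gorenstein" and "complete intersection (of type GK)" precise, following the definitions promised for Section~\ref{zzsec5}: cyclotomic Gorenstein should mean AS Gorenstein with a Hilbert series that is a ratio of products of cyclotomic polynomials, and a complete intersection of type GK should mean that $A^H\cong C/(\Omega_1,\dots,\Omega_r)$ for an AS regular $C$ and a regular sequence of normal elements, with the GK-dimension dropping by exactly one at each step.

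For (C1)$\Rightarrow$(C2), I would first observe that being a complete intersection forces $A^H$ to be AS Gorenstein, and by computing the Hilbert series of a complete intersection as a product of factors $(1-t^{d_i})$ over $(1-t^{e_j})$ coming from the regular sequence and the ambient regular algebra, the Hilbert series is manifestly a ratio of products of cyclotomic polynomials, giving cyclotomic Gorenstein. The harder half of (C1)$\Rightarrow$(C2) is the bireflection conclusion: here I would combine Proposition~\ref{zzpro0.2}(2), which identifies AS Gorenstein-ness of $A^H$ with triviality of the homological determinant, with a case-by-case analysis of the finite subgroups $H\subset\Aut(A)$ for each family of down-up algebras. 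Using the explicit automorphism groups from \cite{KK}, one computes $\rank(g-I)$ on the degree-one piece $A_1$ for generators $g$ and checks that the complete intersection condition (via the Hilbert series constraint) is incompatible with $H$ containing an element that is not a bireflection.

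The implication (C2)$\Rightarrow$(C3) is immediate, since it is just a weakening that forgets the bireflection hypothesis. The substantive content is therefore the converse (C3)$\Rightarrow$(C1): starting only from the assumption that $A^H$ is cyclotomic Gorenstein, I would need to exhibit $A^H$ as a genuine complete intersection. The strategy is to use the cyclotomic Hilbert series as a rigid numerical constraint that, combined with the classification of AS regular algebras $C$ of the appropriate global dimension, pins down the possible $A^H$ up to a short list; for each entry on that list I would then construct an explicit AS regular cover $C$ and a regular normal sequence $\Omega_1,\dots,\Omega_r$ realizing $A^H=C/(\Omega_1,\dots,\Omega_r)$. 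In the global dimension two analogue this is exactly the hypersurface conclusion of Theorem~\ref{zzthm0.1}, which I would use as a template; in dimension three the potential obstruction is that the cyclotomic condition alone might a priori allow Hilbert series that do not factor as a complete intersection, so the main obstacle is showing that no such "fake" cyclotomic Gorenstein invariant subrings occur.

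I expect the hardest step to be the direction (C3)$\Rightarrow$(C1), and within it the verification that cyclotomic Gorenstein-ness genuinely forces a complete intersection structure rather than merely a cyclotomic Hilbert series. This is where the special geometry of down-up algebras must be exploited: the enumeration of finite subgroups $H$ and their homological determinants must be complete enough that, after discarding the cases where $A^H$ fails to be AS Gorenstein, every remaining cyclotomic case can be matched with an explicit complete intersection presentation. The bookkeeping of Hilbert series across the several parameter families of $A(\alpha,\beta)$, together with the interaction between the cubic relations and the group action on $A_1$, is likely to be the most delicate and computation-heavy part of the argument.
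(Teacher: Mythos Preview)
Your proposal has a definitional error that derails the strategy. You take ``complete intersection of type GK'' to mean a quotient $C/(\Omega_1,\dots,\Omega_r)$ of an AS regular algebra by a regular normal sequence; in the paper that is the definition of a \emph{classical} complete intersection (cci). A gci is defined instead by the condition that the Ext-algebra $\Ext^*_{A^H}(k,k)$ has finite GK-dimension (Definition~\ref{zzdef5.1}). These notions are not known to coincide in the noncommutative setting, and the paper does not attempt to produce explicit AS regular covers for every $A^H$; it only needs to bound the growth of $\Ext^*(k,k)$. Consequently your plan for (C3)$\Rightarrow$(C1), namely ``construct an explicit AS regular cover $C$ and a regular normal sequence realizing $A^H$'', aims at something both stronger and different from what is required, and for many of the groups involved (e.g.\ the binary dihedral groups acting on $A(\alpha,-1)$) no such presentation is produced in the paper.

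Beyond the definition, two structural ingredients are absent from your outline. First, the engine of the paper's argument is a filtration technique (Proposition~\ref{zzpro0.4}, Lemma~\ref{zzlem6.5}): one chooses an $H$-stable filtration $F$ so that $\gr_F A$ is a skew polynomial ring, and then the May spectral sequence transfers the gci property from $(\gr_F A)^{\widehat H}$ back to $A^H$. This reduction is what makes the problem tractable; a direct attack on $A^H$ via Hilbert-series numerology, as you suggest, does not by itself control $\Ext^*(k,k)$. Second, the logical skeleton of the paper is not ``every cyclotomic Gorenstein $A^H$ is matched with a presentation'' but rather a dichotomy over the classified list of groups $Q_1,\dots,Q_8$ and the finite subgroups of $SL_2(k)$: for $Q_3$ (with $n>1$) and $Q_4$ one computes the Hilbert series and shows it is \emph{not} cyclotomic (Lemmas~\ref{zzlem8.4}, \ref{zzlem8.7}, \ref{zzlem8.6}), so those cases are eliminated from (C3); for every remaining group one proves $A^H$ is a gci via the filtration reduction. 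Your sketch does not isolate the non-cyclotomic cases, and without that step (C3)$\Rightarrow$(C1) would in fact be false for those $H$ were $A^H$ cyclotomic. Finally, note that ``bireflection'' here is defined through the pole order of the trace function (Definition~\ref{zzdef5.6}, Lemma~\ref{zzlem7.3}(3)), not via $\rank(g-I)$ on $A_1$; for down-up algebras the criterion is $\det g=1$ or $g$ a classical reflection on $A_1$.
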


In general, it would be interesting to understand the
connection between three very different concepts
$$\text{``complete intersection'', \; ``cyclotomic
Gorenstein'' \; and \; ``bireflection''}$$ for a general
noncommutative AS regular algebra. It was proved in \cite{KKZ4} that
condition (C1) implies condition (C3) for $A^H$, where $A$
is any noetherian AS regular algebra, so it is a natural question
to ask if condition (C1) is equivalent to (C3) for algebras of the
form $A^H$, where $A$ is a noetherian AS regular algebra.
 In \cite{St1} R. Stanley asked when a commutative cyclotomic Gorenstein
ring is a complete intersection, and he gave an example that shows 
that this is not always the case \cite[Example 3.9]{St1}. The answer 
to Stanley's question is not yet well understood, even for cyclic 
groups, even when $A$ is a commutative polynomial ring  (see \cite{G3}). 
If $A$ is a commutative polynomial ring, then condition (C1) implies
(C2) by Kac-Watanabe/Gordeev \cite{KW, G1}. Another natural
question, which is a motivating question for this paper, is whether
the Kac-Watanabe/Gordeev theorem holds for general noncommutative
AS regular algebras.  In some cases, the fixed subrings that
appear in Theorem \ref{zzthm0.3} are presented explicitly
as (classical) complete intersections [Examples \ref{zzex8.2}
and \ref{zzex8.3}].

To prove Theorem \ref{zzthm0.3}, we first find the finite groups $H$ 
that act on some graded down-up algebra  $A:=A(\alpha, \beta)$ with 
trivial homological determinant; 
the set of all graded automorphisms with trivial homological determinant
is denoted by $SLA(A)$, as they can be considered 
as analogs of the elements in the special linear group $SL_n(k)$. 
In fact, for two families of down-up algebras all finite subgroups 
$SL_2(k)$ appear in the classification, but as the ``homological 
determinant of $g$" for $g$ a graded automorphism of a down-up 
algebra is $(\text{det } g)^2$, there are additional subgroups of 
$GL_2(k)$ in $SLA(A)$.  For all values of $\alpha$ and $\beta$, there 
are diagonal automorphisms of $A$ of the form
$$g = \begin{pmatrix}
a & 0 \\
0 & b
\end{pmatrix},$$
and we let $O := k^\times \times k^\times$ denote the group of all 
elements of that form, where $k^\times$ denotes $k\setminus \{0\}$. 
For some values of the parameters, these are the only kinds of graded 
automorphisms of $A(\alpha, \beta)$. For the parameterized family 
$A= A(\alpha, -1)$, there are groups of automorphisms of $A$ whose 
elements are either in $O$, or are of the form
$$g = \begin{pmatrix}
0 & a \\
b & 0
\end{pmatrix},$$
and we let $U$ denote the group of all elements that are of the form 
above, or in $O$.  We show that the families of finite subgroups of 
$O$ and $U$ that occur in $SLA(A)$, arise as involutions of type ADE 
singularities of $k[x,y]$. These families of finite groups are
classified in Section \ref{zzsec3}, and the eight families 
$Q_i, i= 1, \dots, 8$, that arise appear in Table 3 at the end of 
that section.  In Section \ref{zzsec4} we find the corresponding 
finite subgroups of $O$ for $k_q[x,y]$.

Our goal is to determine which of the AS Gorenstein subrings $A^H$ 
are ``complete intersections", but even for groups of small order 
it is difficult to obtain an explicit presentation of $A^H$.
Hence, an important ingredient in the proof of Theorem \ref{zzthm0.3} 
is lifting the various properties in (C1-C3) from the associated 
graded ring $\gr_F A$ to $A$, where $F$ is a
filtration of $A$ (definitions are given in Section \ref{zzsec6}).
For down-up algebras we often are able to choose a filtration so 
that $\gr_F A$ is an algebra where the computation is much easier.
The following results are proved in Section \ref{zzsec6}.

\begin{proposition}
\label{zzpro0.4} Let $A$ be a connected graded AS regular algebra and $H$ a
finite subgroup of $\Aut(A)$. Suppose $F$ is a graded $H$-stable
filtration of $A$ such that the associated Rees ring is noetherian
and that the associated graded algebra $\gr_F A$ is an AS 
Cohen-Macaulay algebra.
Let $\widehat{H}$ be the subgroup of $\Aut(\gr_F A)$ induced from $H$.
Then the following properties hold.
\begin{enumerate}
\item
If the fixed subring $(\gr_F A)^{\widehat{H}}$ is a complete
intersection of type GK, then so is $A^H$.
\item
Assume $\gr_F A$ is a domain. The fixed subring $(\gr_F A)^{\widehat{H}}$
is cyclotomic Gorenstein if and only if $A^H$ is.
\item
The group $\widehat{H}$ is generated by bireflections if and only
if $H$ is.
\end{enumerate}
\end{proposition}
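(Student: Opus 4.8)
The unifying device throughout is the Reynolds operator. Since $\ch k = 0$ and $H$ is finite, $\pi := \frac{1}{|H|}\sum_{g \in H} g$ is an $H$-equivariant projection of $A$ onto $A^H$. Because $F$ is $H$-stable, $\pi$ preserves each $F_nA$, so it is a filtered map and induces on associated graded rings the Reynolds operator $\frac{1}{|H|}\sum_{g\in H}\widehat g$ of $\widehat H$. As $\pi$ is a split filtered projection, passing to symbols identifies $\gr_F(A^H)$ with the image of the induced Reynolds operator, giving a canonical isomorphism of connected graded algebras
\[
\gr_F(A^H) \;\cong\; (\gr_F A)^{\widehat H}.
\]
In particular the two sides have the same Hilbert series, so $H_{A^H}(t) = H_{(\gr_F A)^{\widehat H}}(t)$. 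First I would record two consequences of this set-up: the assignment $g \mapsto \widehat g$ is a group homomorphism $H \to \widehat H$ (shown to be an isomorphism in the next paragraph), and, under the domain hypothesis of part (2), the standard symbol argument shows that $\gr_F A$ a domain forces $A$ a domain, hence so are the subrings $A^H$ and $(\gr_F A)^{\widehat H}$.

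For part (3) I would argue through eigenvalues. Every $g \in H$ has finite order and $\ch k = 0$, so $g$ is semisimple; restricted to the finite-dimensional space $A_n$ it is diagonalizable. Choosing a basis of $A_n$ adapted to the filtration $F$, the matrix of $g$ is block upper-triangular with diagonal blocks equal to the action of $\widehat g$ on the graded pieces of $(\gr_F A)_n$; hence $g|_{A_n}$ and $\widehat g|_{(\gr_F A)_n}$ have the same eigenvalue multiset, and both are diagonalizable. Consequently $\widehat g = \id$ forces every eigenvalue of $g$ to equal $1$ and thus $g = \id$, giving the injectivity (hence bijectivity) of $g \mapsto \widehat g$ claimed above. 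Moreover $\rank(g - I) = \dim A_n - (\text{multiplicity of the eigenvalue }1)$ agrees with the corresponding quantity for $\widehat g$ on each graded piece, in particular on the degree-one generating space. Since the bireflection condition of Section \ref{zzsec5} is read off from the eigenvalues of the action on the generating space, $g$ is a bireflection if and only if $\widehat g$ is; under the isomorphism $H \cong \widehat H$ this shows $\widehat H$ is generated by bireflections precisely when $H$ is.

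For part (2) the plan is to reduce the Gorenstein property entirely to the Hilbert series. Both $A^H$ and $(\gr_F A)^{\widehat H}$ are fixed rings, under a finite group in characteristic zero, of AS Cohen--Macaulay algebras ($A$ is AS regular, and $\gr_F A$ is AS Cohen--Macaulay by hypothesis); since the ambient ring is a finitely generated module over the fixed ring and the Reynolds operator splits the inclusion, each fixed ring is again AS Cohen--Macaulay. By the first paragraph they are AS Cohen--Macaulay domains with identical Hilbert series. I would then invoke the graded analogue of Stanley's criterion: a connected graded noetherian AS Cohen--Macaulay domain is AS Gorenstein if and only if its Hilbert series satisfies the Gorenstein functional equation $H(t^{-1}) = (-1)^{d} t^{a} H(t)$ with $d = \GKdim$. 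Because the two Hilbert series coincide, $A^H$ is AS Gorenstein if and only if $(\gr_F A)^{\widehat H}$ is --- and this is exactly where the domain hypothesis of part (2) is used. Finally, the cyclotomic condition, namely that the numerator of the Hilbert series in lowest terms is a product of cyclotomic polynomials, depends only on the Hilbert series, so it too transfers. Combining these observations yields the asserted equivalence of cyclotomic Gorensteinness.

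For part (1) I would work with the complete-intersection-of-type-GK condition, which is governed by the finiteness of the GK-dimension of the Yoneda (Ext) algebra $E(R) := \boplus_i \Ext^i_R(k,k)$. The goal is the one-directional lifting: if $(\gr_F A)^{\widehat H} = \gr_F(A^H)$ has finite $\GKdim E(-)$, then so does $A^H$. The plan is to propagate the filtration to a filtration on the complex computing $E(A^H)$ (a bar- or Koszul-type resolution of $k$), producing a spectral sequence whose $E_1$-page is assembled from $E(\gr_F(A^H))$ and which converges to $E(A^H)$; since passing to associated graded can only collapse structure, this yields the inequality $\GKdim E(A^H) \le \GKdim E(\gr_F(A^H))$, so finiteness lifts. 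The noetherianity of the Rees ring is what guarantees that the relevant complexes are finitely generated and that the comparison behaves well. I expect this filtered-to-graded comparison of Ext algebras to be the main obstacle: controlling the GK-dimension of a noncommutative Yoneda algebra under degeneration to the associated graded is considerably more delicate than the Hilbert-series and eigenvalue arguments used for parts (2) and (3), and it is the step that genuinely requires the machinery developed in Section \ref{zzsec6}.
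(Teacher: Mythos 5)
Your proposal is correct and follows essentially the same route as the paper: the Reynolds-operator identification $\gr_{F'}(A^H)\cong(\gr_F A)^{\widehat H}$ is the paper's Lemma \ref{zzlem6.7}(3), the degree-by-degree preservation of traces and Hilbert series is Proposition \ref{zzpro6.8}, the reduction of AS Gorensteinness to the Hilbert series via Stanley's criterion for AS Cohen--Macaulay domains is Proposition \ref{zzpro6.8}(6), and the filtered bar complex/May spectral sequence giving $\GKdim\Ext^*_{A^H}(k,k)\leq\GKdim\Ext^*_{\gr_{F'}A^H}(k,k)$ is Proposition \ref{zzpro6.4} and Lemma \ref{zzlem6.5}(3). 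The one slip is in part (3): the paper's notion of bireflection (Definition \ref{zzdef5.6}) is read off from the full trace series $Tr_A(g,t)$, not from the eigenvalues of $g$ on the degree-one generating space; however, your block-triangular comparison already shows the eigenvalue multisets of $g|_{A_n}$ and $\widehat g|_{(\gr_F A)_n}$ agree for every $n$, hence $Tr_A(g,t)=Tr_{\gr_F A}(\widehat g,t)$, which is exactly what the trace-based definition requires, so the conclusion stands once that last sentence is rephrased.
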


Properties of down-up algebras and their invariants are given in
Section \ref{zzsec7}, and in Section \ref{zzsec8} we determine which of the
$A^{Q_i}$ are complete intersections of GK-type for some  
values of the parameters $\alpha$ and $\beta$ (see Table 4 in
Section 8).  In Lemma \ref{zzlem7.3} we note
that the AS Gorenstein and cyclotomic properties of $A^H$ depend on the 
eigenvalues of elements of $H$, and not on the parameters $\alpha$ 
and $\beta$.  The proof of Theorem \ref{zzthm0.3} concludes in Section 
\ref{zzsec9}, where the remaining cases, including $A(\alpha, -1)$ and  
$A(0,1)$ (that have non-diagonal automorphisms), are completed;
the filtered-graded results allow us to consider only finite subgroups of
$SL_2(k)$ and the groups $Q_i$.

Throughout, an element of a graded algebra  usually means a homogeneous 
element. For simplicity, the base field $k$ is assumed to be 
algebraically closed of characteristic zero, though some
definitions and basic properties do not require it.
This paper is closely related to \cite{KKZ4}, in which several
different definitions of ``noncommutative complete intersection''
are proposed.

\section{Preliminaries}
\label{zzsec1}

Most of the notation and
the terminology follows that of \cite{JoZ, KKZ1, KKZ4}. For example, 
$\Aut(A)$ denotes the group of graded
algebra automorphisms of $A$. For $g\in \Aut(A)$, $Tr_A(g,t)$ denotes
the trace function of $g$ (as defined in \cite[p. 6335]{KKZ1}).

We refer to \cite{KL} for the definition of Gelfand-Kirillov
dimension and its basic properties. However, for our purpose, we
will use the following simpler definition.

\begin{definition}
\label{zzdef1.1}
Let $A$ be a connected graded algebra. The {\it Gelfand-Kirillov
dimension} (or {\it GK-dimension} for short) of $A$ is defined
to be
\begin{equation}
\label{E1.1.1}\tag{E1.1.1}
\GKdim A=\limsup_{n\to\infty}
\frac{\log (\sum_{i=0}^n \dim A_i)}{\log n}.
\end{equation}
\end{definition}
This definition agrees with the original definition given in \cite{KL}
when $A$ is finitely generated, but may differ otherwise. Note that
most of our algebras will be noetherian, hence finitely generated.

We recall the definition of AS regularity, see also
\cite[Definition 1.5]{KKZ1}.

\begin{definition}
\label{zzdef1.2}
Let $A$ be a connected graded algebra.
We call $A$ {\it Artin-Schelter Gorenstein} (or {\it AS Gorenstein}
for short) of dimension $d$ if the following conditions hold:
\begin{enumerate}
\item[(a)]
$A$ has injective dimension $d<\infty$ on the left and
on the right,
\item[(b)]
$\Ext^i_A(_Ak,_AA)=\Ext^i_{A}(k_A,A_A)=0$ for all $i\neq d$, and
\item[(c)]
$\Ext^d_A(_Ak,_AA)\cong \Ext^d_{A}(k_A,A_A)\cong k(l)$ for some $l$ (where
$l$ is called the {\it AS index}).
\end{enumerate}
If, in addition,
\begin{enumerate}
\item[(d)]
$A$ has finite global dimension, and
\item[(e)]
$A$ has finite Gelfand-Kirillov dimension,
\end{enumerate}
then $A$ is called {\it Artin-Schelter regular}
(or {\it AS regular} for short) of dimension $d$.
\end{definition}

The homological determinant $\hdet: \Aut(A)\to k^\times$
was introduced in \cite{JoZ}. The following lemma
will be used frequently.

\begin{lemma}
\label{zzlem1.3}\cite[Lemma 2.6]{JoZ}
Let $A$ be AS Gorenstein of injective dimension $d$ and let
$g\in \Aut(A)$. Suppose $g$ is $k$-rational in the sense of
\cite[Definitions 1.3]{JoZ} {\rm{(}}e.g., if $A$ is AS regular, 
or $A$ is PI{\rm{)}}. Then the trace of $g$ is of the form
\begin{equation}
\label{E1.3.1}\tag{E1.3.1}
Tr_A(g,t)=(-1)^d (\hdet g)^{-1} t^l+{\text{lower degree terms}},
\end{equation}
when we express $Tr_A(g,t)$ as a Laurent series in $t^{-1}$.
\end{lemma}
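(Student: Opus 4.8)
The plan is to pass to local cohomology, where the homological determinant is defined, and to recover the leading Laurent coefficient from a one-dimensional socle. Write $\fm$ for the graded maximal ideal of $A$, let $H^i_{\fm}(-)$ denote graded local cohomology, and for a graded vector space $V=\bigoplus_j V_j$ carrying a $g$-action set $Tr_V(g,t)=\sum_j \operatorname{tr}(g\mid V_j)\,t^j$. First I would use the AS Gorenstein hypothesis of Definition \ref{zzdef1.2}, in its graded local-duality form, to record that $H^i_{\fm}(A)=0$ for $i\neq d$ while $H^d_{\fm}(A)$ is a shift of the graded Matlis dual of $A$, so that $\dim H^d_{\fm}(A)_{\,l-n}=\dim A_n$ for all $n$. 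Thus $H^d_{\fm}(A)$ is concentrated in degrees $\le l$ and its top piece $H^d_{\fm}(A)_{\,l}$ is one-dimensional; this top piece is precisely the socle on which $\hdet$ is defined in \cite{JoZ}, and the functorial action of $g$ on it is multiplication by $(\hdet g)^{-1}$.

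The heart of the argument is the $g$-equivariant Euler-characteristic identity
\[
Tr_A(g,t)=(-1)^d\,Tr_{H^d_{\fm}(A)}(g,t),
\]
read as an equality of rational functions in $t$, the left side being the expansion at $t=0$ and the right side the expansion at $t=\infty$. I would obtain it by applying the derived torsion functor $R\Gamma_{\fm}$ $g$-equivariantly: the balanced dualizing complex of $A$ together with graded local duality identifies $R\Gamma_{\fm}(A)$ with $H^d_{\fm}(A)[-d]$, and matching the alternating sums of graded $g$-traces on $A$ and on $R\Gamma_{\fm}(A)$ gives the identity, the sign $(-1)^d$ coming from the single cohomological shift. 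The hypothesis that $g$ is $k$-rational is exactly what guarantees that $g$ acts with well-defined traces on the (finite-dimensional) graded pieces and that the two sides are the two expansions of one rational function; I would fix the signs and the degree shift on a test algebra such as $A=k[x]$ or $k[x,y]$, where both sides are computed directly.

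With the identity in hand, \eqref{E1.3.1} is immediate. Since $H^d_{\fm}(A)$ vanishes in degrees $>l$ and has one-dimensional top piece in degree $l$, the Laurent series $Tr_{H^d_{\fm}(A)}(g,t)$ has highest term $\operatorname{tr}(g\mid H^d_{\fm}(A)_l)\,t^l=(\hdet g)^{-1}t^l$, all other terms being of strictly lower degree in $t$; multiplying by $(-1)^d$ gives $Tr_A(g,t)=(-1)^d(\hdet g)^{-1}t^l+(\text{lower degree terms})$. Equivalently one may package the same input as the functional equation $Tr_A(g,t)=(-1)^d(\hdet g)^{-1}t^{l}\,Tr_A(g^{-1},t^{-1})$ and read off the leading term from $Tr_A(g^{-1},t^{-1})=1+O(t^{-1})$.

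I expect the main obstacle to be the equivariant Euler-characteristic identity itself: one must promote the vanishing and duality statements for $A$ to genuinely $g$-equivariant statements in the derived category of graded modules, and justify that the formal alternating sum of traces on local cohomology really equals the rational function $Tr_A(g,t)$ rather than merely matching coefficients — which is where $k$-rationality and the noetherian hypothesis do the work. Once this is available the identification of the socle scalar with $(\hdet g)^{-1}$ is definitional and the extraction is routine. In the AS regular case of primary interest here the dualizing-complex machinery can be bypassed: a finite $g$-equivariant minimal free resolution of $k$ exists, its self-duality forces the top term to be a single copy of $A(-s)$ with $s=-l$ on whose rank-one generator $g$ acts by $\hdet g$, and the resulting equivariant $K$-polynomial $p(g,t)$ gives $Tr_A(g,t)=1/p(g,t)$ with leading behaviour $(-1)^d(\hdet g)^{-1}t^l$, providing a consistency check on the signs.
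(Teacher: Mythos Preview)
The paper does not prove this lemma at all; it is quoted verbatim from \cite[Lemma~2.6]{JoZ}, and Remark~\ref{zzrem1.4} explicitly says that the local-cohomology definition of $\hdet$ from \cite{JoZ} will not be reviewed. So there is no proof in the present paper to compare your attempt against.

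Your sketch is nonetheless correct and is essentially a reconstruction of the argument in \cite{JoZ}. The homological determinant is defined there through the $g$-action on the one-dimensional socle of $H^d_{\fm}(A)$, and the equality $Tr_A(g,t)=(-1)^d\,Tr_{H^d_{\fm}(A)}(g,t)$ of the two Laurent expansions of a single rational function is precisely what is established in that reference, with $k$-rationality playing the role you describe. Your alternative packaging as the functional equation $Tr_A(g,t)=(-1)^d(\hdet g)^{-1}t^{l}\,Tr_A(g^{-1},t^{-1})$ is in fact closer to how \cite[Lemma~2.6]{JoZ} is actually stated. One small caution: the sign of $l$ depends on the shift convention for $k(l)$, and the convention visible in the proof of Lemma~\ref{zzlem1.6} (where $M[n]_i=M_{i-n}$) makes $l$ negative for AS regular algebras generated in positive degree; with that convention your claim that the top graded piece of $H^d_{\fm}(A)$ sits in degree $l$ is consistent, but it is worth checking on $k[x]$ as you suggest.
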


\begin{remark}
\label{zzrem1.4}
In this paper, one can view \eqref{E1.3.1} as a definition of
homological determinant, or as a way of computing the homological
determinant. Hence we will not review the original definition of the
homological determinant in \cite{JoZ}, which uses local cohomology.
Let $G$ be a subgroup of $\Aut(A)$.
We say the homological determinant $\hdet$ of a $G$-action on $A$
is trivial if $\hdet g=1$ for all $g\in G$.

We make the
following general hypotheses for the rest of the paper:
\begin{enumerate}
\item[(H1.4.1)]
{\bf Every finitely generated graded $A$-module is rational over $k$
in the sense of \cite[Definition 1.4]{JoZ}, and}
\item[(H1.4.2)]
{\bf Every $g\in \Aut(A)$ is $k$-rational in the sense of
\cite[Definition 1.3]{JoZ}.}
\end{enumerate}
Similar assumptions were made in \cite[Lemma 4.7]{KKZ2}.
By \cite{JoZ} every finitely generated graded
module over $A$ is rational over $k$ and every $g\in \Aut(A)$ is
$k$-rational if one of the following holds:
\begin{enumerate}
\item
$A$ is noetherian PI;
\item
$A$ is a factor ring of a noetherian AS regular algebra;
\item
$A$ is a fixed subring of a noetherian AS regular algebra under
a finite group action.
\end{enumerate}
 All the algebras
in our main theorems satisfy the rationality conditions given in
\cite[Definitions 1.3 and 1.4]{JoZ}, and for this reason
we can freely use results in \cite{JoZ}.
\end{remark}

The following definition is needed in the next section.

\begin{definition}
\label{zzdef1.5} A connected graded algebra $A$ is called a
{\it hypersurface} if $A\cong C/(\Omega)$ where $C$ is a
noetherian AS regular algebra and $\Omega$ is a regular normal
element in $C$.
\end{definition}

The traces of some special automorphisms of a hypersurface can be
easily computed using the following lemma.

\begin{lemma}
\label{zzlem1.6} Let $C$ be a connected graded algebra with a
regular normal element $\Omega\in C$. Let $g\in \Aut(C)$ such that
$g(\Omega)= \lambda \Omega$ for some $\lambda\in k^\times$, and let
$h$ be the induced automorphism of $g$ on $\overline{C}:=C/(\Omega)$. Then
\begin{enumerate}
\item
$$Tr_C(g,t)= \frac{Tr_{\overline{C}}(h, t)}{1-\lambda t^{\deg \Omega}}.$$
\item
Suppose that there is a sequence of elements $\{x_1, \cdots, x_n\}$
in $C$ such that, for each $i$,
\begin{enumerate}
\item
 the image of $x_i$ in the factor ring
$C/(x_1,\cdots,x_{i-1})$ is a regular normal element,
\item
$C/(x_1,\cdots,x_n)=k$, and
\item
for each $i$, $g(x_i)=\lambda_i x_i$ for some $\lambda_i\in k^\times$.
\end{enumerate}
Then $$Tr_C(g,t)=\frac{1}{(1-\lambda_1 t^{\deg x_1})\cdots
(1-\lambda_n t^{\deg x_n})}.$$
As a consequence, $\hdet g=\prod_{i=1}^n \lambda_i$.
\item
Suppose that $C$ is as in part (2). Then 
$$Tr_{\overline{C}}(h,t)=\frac{1-\lambda
t^{\deg \Omega}}{(1-\lambda_1 t^{\deg x_1})\cdots (1-\lambda_n t^{\deg
x_n})}.$$
As a consequence, $\hdet h=\lambda^{-1}\prod_{i=1}^n \lambda_i$.
\end{enumerate}
\end{lemma}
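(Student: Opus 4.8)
The plan is to deduce parts (2) and (3) from part (1), so I would prove (1) first by a short exact sequence argument. Writing $d := \deg \Omega$, regularity of $\Omega$ makes left multiplication $c \mapsto \Omega c$ injective with image the two-sided ideal $(\Omega) = \Omega C = C\Omega$, giving a short exact sequence of graded vector spaces
$$0 \longrightarrow C(-d) \xrightarrow{\ \cdot\, \Omega\ } C \longrightarrow \overline{C} \longrightarrow 0 .$$
The crux is to track the $g$-action degree by degree. Since $g(C_{n-d}) = C_{n-d}$ and $g(\Omega c) = g(\Omega) g(c) = \lambda\, \Omega\, g(c)$, the subspace $\Omega C_{n-d} \subseteq C_n$ is $g$-stable and $g$ acts on $C_n / \Omega C_{n-d} = \overline{C}_n$ precisely as $h$. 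Under the isomorphism $C_{n-d} \xrightarrow{\ \sim\ } \Omega C_{n-d}$, $v \mapsto \Omega v$, the restriction $g|_{\Omega C_{n-d}}$ is conjugate to $\lambda\, g|_{C_{n-d}}$, so $\operatorname{tr}(g|_{\Omega C_{n-d}}) = \lambda \operatorname{tr}(g|_{C_{n-d}})$. Additivity of the trace then gives, in every degree,
$$\operatorname{tr}(g|_{C_n}) = \lambda\, \operatorname{tr}(g|_{C_{n-d}}) + \operatorname{tr}(h|_{\overline{C}_n}) .$$
Multiplying by $t^n$ and summing over $n$ yields $Tr_C(g,t) = \lambda t^d\, Tr_C(g,t) + Tr_{\overline{C}}(h,t)$, which rearranges to the formula in (1).

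For part (2) I would iterate (1) along the regular sequence. Put $C^{(0)} = C$ and $C^{(i)} = C^{(i-1)}/(\bar x_i)$, with $g^{(i)}$ the induced automorphism; by (b) we reach $C^{(n)} = k$, on which $g^{(n)}$ is the identity, so $Tr_k(g^{(n)}, t) = 1$. At each stage $\bar x_i$ is regular normal in $C^{(i-1)}$ by (a) and $g^{(i-1)}(\bar x_i) = \lambda_i \bar x_i$ by (c), so (1) gives $Tr_{C^{(i-1)}}(g^{(i-1)},t) = Tr_{C^{(i)}}(g^{(i)},t)/(1 - \lambda_i t^{\deg x_i})$; telescoping from $i = n$ down to $i = 1$ produces the asserted product. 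For the homological determinant I would expand this rational function as a Laurent series in $t^{-1}$: each factor $1/(1 - \lambda_i t^{\deg x_i})$ has top term $-\lambda_i^{-1} t^{-\deg x_i}$, so the product has top term $(-1)^n (\prod_i \lambda_i)^{-1} t^{-\sum_i \deg x_i}$. Since the same regular normal sequence exhibits $C$ as AS regular of injective dimension $n$ (each quotient has finite global dimension, so Rees's change-of-rings applies at each step), comparison with the normal form $(-1)^n (\hdet g)^{-1} t^l + \cdots$ of Lemma \ref{zzlem1.3} yields $\hdet g = \prod_{i=1}^n \lambda_i$.

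Part (3) then follows by combining (1) and (2): $Tr_{\overline{C}}(h,t) = (1 - \lambda t^{\deg \Omega})\, Tr_C(g,t)$ is exactly the claimed expression. As $\overline{C} = C/(\Omega)$ is a hypersurface, it is AS Gorenstein of injective dimension $n-1$, so Lemma \ref{zzlem1.3} applies again; the extra numerator factor contributes a leading $-\lambda t^{\deg \Omega}$, so the top term of $Tr_{\overline{C}}(h,t)$ is $(-1)^{n-1} \lambda (\prod_i \lambda_i)^{-1} t^{\deg \Omega - \sum_i \deg x_i}$, whence $\hdet h = \lambda^{-1} \prod_{i=1}^n \lambda_i$.

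I expect the only genuine subtlety to be the equivariance bookkeeping in part (1) — confirming that $g$ restricted to $\Omega C_{n-d}$ is scaled by exactly $\lambda$ relative to its action on $C_{n-d}$, which is what injects the factor $1 - \lambda t^{\deg \Omega}$ into the trace identity. Everything else is a formal manipulation of trace series together with the leading-term reading of the homological determinant supplied by Lemma \ref{zzlem1.3}.
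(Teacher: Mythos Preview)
Your proof is correct and follows essentially the same approach as the paper's: a short exact sequence argument for (1), iteration along the regular normal sequence for (2), and a direct combination for (3), with Lemma~\ref{zzlem1.3} supplying the $\hdet$ readings. The paper uses right multiplication $r_\Omega$ rather than your left multiplication, and is considerably terser (it omits the equivariance bookkeeping you spell out and simply says ``induction on $n$''), but the underlying argument is the same.
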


\begin{proof} (1) Denote the degree $n$ shift of a graded vector space
$M$ by $M[n]$. Applying $g$ to the
short exact sequence
$$0\to C[\deg \Omega]\xrightarrow{r_\Omega} C\to C/(\Omega)\to 0,$$
where $r_{\Omega}$ denotes the left $C$-module homomorphism defined by
the right multiplication by $\Omega$,
we obtain that 
$Tr_C(g,t)-Tr_C(g,t)\lambda t^{\deg \Omega}=Tr_{\overline{C}}(h,t)$.
The assertion follows.

(2) This follows from part (1) and induction on $n$. The second assertion
follows by Lemma \ref{zzlem1.3}.

(3) The first assertion follows from parts (1) and (2), and the
second assertion follows from Lemma \ref{zzlem1.3}.
\end{proof}

A noncommutative version of ``reflection'' was introduced in
\cite{KKZ1}.

\begin{definition}
\label{zzdef1.7} Let $A$ be a noetherian connected graded algebra
with $\GKdim A = n$. An element $g\in \Aut(A)$ is called a {\it
quasi-reflection} if the trace of $g$, $Tr_A(g,t)$, is of the form
$$\frac{p(t)}{(1-t)^{n-1}q(t)}$$
where $p(t), q(t)$ are integral polynomials with $p(1)q(1)\neq 0$.
\end{definition}

We often omit the prefix ``quasi-'' if no confusion occurs. If $A$
is AS regular then $p(t) = 1$.

Let $G\subset \Aut(A)$. The fixed subring of $A$ under the
$G$-action is defined to be
$$A^G:=\{x\in A: g(x)=x,\; {\text{for all $g\in G$}}\}.$$
If $G$ is generated by a single element $g$, we write $A^g$ for $A^G$.

\section{Proof of Theorem \ref{zzthm0.1}}
\label{zzsec2}
In this section a hypersurface means
$C/(\Omega)$, where $C$ is a noetherian AS regular algebra of dimension
three and $\Omega$ is a regular normal element in $C$. In this section 
$G$ is a finite subgroup of $\Aut(A)$.

\begin{proof}[Proof of Theorem \ref{zzthm0.1}]
(2) $\Longrightarrow$ (1)
Let $A^G$ be a hypersurface $C/(\Omega)$.
By Rees's lemma \cite[Proposition 3.4(b)]{Le}, $C/(\Omega)$
is AS Gorenstein.

(1) $\Longrightarrow$ (2)
Conversely, we assume that $A^G$ is AS Gorenstein, where $A$ is
an AS regular algebra of dimension two that is generated in degree
1, and $G$ is a finite subgroup of $\Aut(A)$. We want to show that
$A^G$ is a hypersurface.

Since $k$ is algebraically closed, every AS regular algebra $A$ of
dimension two generated in degree 1 is isomorphic to either
$$k_q[x,y]:=k\langle x,y\rangle/(yx-qxy), \quad {\text{or}}
\quad k_J[x,y]:= k\langle x,y\rangle/(yx-xy-x^2)$$
where $q\in k^\times$.
We analyze fixed subrings $A^G$ in each case. Part of the
analysis in this proof will be used in the proof of Theorem
\ref{zzthm0.3} in later sections.

\medskip

\noindent
{\bf Case 2.1:} $A=k_J[x,y]$. It follows by an easy computation that
every graded algebra automorphism $g$ of $A$ is of the form
$$g: x\to ax, \quad y\to ay+bx$$
for some $a\in k^{\times}$ and $b\in k$. If
$g$ is of finite order, then $g$ is of the form
$$\rho_a: x\to ax, \quad y\to ay$$
for some $a\in k^\times$. The trace of $\rho_a$ is
$$Tr_A(\rho_a,t)=\frac{1}{(1-at)^2}.$$
Hence $\Aut(A)$ does not contain any reflections [Definition \ref{zzdef1.7}].
By \cite[Corollary 4.11]{KKZ2}, for every finite subgroup $G\subset
\Aut(A)$, $A^G$ is AS Gorenstein if and only if the homological
determinant of $G$ is trivial. By Lemma \ref{zzlem1.3}, the
homological determinant of $\rho_a$ is given by
$$\hdet \rho_a= a^2.$$
When the homological determinant is trivial, $a^2=1$. This means that
$a=1$ or $a=-1$. When $G$ is non-trivial, $G=\langle Id_A, \rho_{-1}
\rangle$. The fixed subring $A^G$ is the second Veronese subalgebera
$A^{(2)}$ which is generated by
$$x^2, \quad y^2, \quad xy.$$
By Molien's theorem,
$$H_{A^G}(t)=\frac{1}{2}\biggl(\frac{1}{(1-t)^2}+\frac{1}{(1+t)^2}\biggr)=
\frac{1-t^4}{(1-t^2)^3}.$$
Let $X:=x^2, \; Y:=y^2$ and $Z:=xy$ with $\deg X=\deg Y=\deg Z=2$. It is
straightforward to check that $X, Y$ and $Z \in A^G$ satisfy the 
following relations:
\begin{align}
\label{E2.0.1}\tag{E2.0.1}
Z X&=XZ+2X^2,\\
\label{E2.0.2}\tag{E2.0.2}
Y X&=XY+4XZ+6 X^2,\\
\label{E2.0.3}\tag{E2.0.3}
Y Z&=(Z+2X)Y+2XZ, \quad {\text{and}},\\
\label{E2.0.4}\tag{E2.0.4}
0&= XY+XZ-Z^2.
\end{align}
The first three relations \eqref{E2.0.1}-\eqref{E2.0.3} define an
iterated Ore extension
$$C:=k[X][Z;\delta_1][Y;\sigma_2,\delta_2],$$
for appropriate automorphism $\sigma_2$ and derivations $\delta_1,
\delta_2$, so that $C$ is a noetherian AS regular algebra of dimension 3
with Hilbert series:
$$H_C(t) = \frac{1}{(1-t^2)^3}.$$
Further, by a computation using \eqref{E2.0.1}-\eqref{E2.0.3} one checks that
$\Omega:=XY+XZ-Z^2$ is a regular central element of $C$.
Forming the factor algebra $C/(\Omega)$, we compute Hilbert series:
$$H_{C/(\Omega)}(t) = \frac{1-t^4}{(1-t^2)^3} = H_{A^G}(t).$$
Therefore $A^G\cong C/(\Omega)$, which is a hypersurface. This
completes Case 2.1.

For any noetherian AS Gorenstein algebra $A$ define
$$\AutSL(A)=\{g\in \Aut(A): \hdet g=1\}$$
and
$$\AutSL_{-1}(A)=\{g\in \Aut(A): \hdet g=1\; {\text{or}}\; -1\}.$$
We record the  analysis of $\Aut(k_J[x,y])$ in the following proposition.

\begin{proposition}
\label{zzpro2.1} Let $A=k_J[x,y]$. If $g\in \AutSL(A)$ has finite
order, then $g$  is either $Id_A$ or $\rho_{-1}$.  If $g\in
\AutSL_{-1}(A)$ has finite order, then $g=Id_A$, $\rho_{-1}$,
$\rho_i$, or $\rho_{-i}$ where $i^2=-1$.
\end{proposition}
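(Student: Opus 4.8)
The plan is to use the explicit description of the finite-order graded automorphisms of $A = k_J[x,y]$ already established in Case 2.1, together with the homological determinant formula from Lemma \ref{zzlem1.3}. The key observation is that the computation in Case 2.1 shows that every graded automorphism of $A$ has the form $g:x\mapsto ax,\ y\mapsto ay+bx$ with $a\in k^\times$, $b\in k$, and that every such $g$ of finite order is in fact of the diagonal form $\rho_a: x\mapsto ax,\ y\mapsto ay$ for some $a\in k^\times$. So the first step is to recall this reduction, so that we need only analyze the diagonal automorphisms $\rho_a$.

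Next I would compute the homological determinant of $\rho_a$. From the trace formula $Tr_A(\rho_a,t)=1/(1-at)^2$ given in Case 2.1 and Lemma \ref{zzlem1.3}, we read off $\hdet \rho_a = a^2$. For $g=\rho_a$ to have finite order, $a$ must be a root of unity. Now I impose the two conditions separately: if $\rho_a\in \AutSL(A)$ then $a^2 = \hdet\rho_a = 1$, forcing $a=\pm 1$, i.e. $g=Id_A$ or $g=\rho_{-1}$; if $\rho_a\in \AutSL_{-1}(A)$ then $a^2 = \hdet\rho_a \in\{1,-1\}$, so $a^2=1$ gives $a=\pm 1$ as before, while $a^2=-1$ gives $a=\pm i$, yielding the additional automorphisms $\rho_i$ and $\rho_{-i}$. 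This exhausts all possibilities and matches the claimed lists.

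The only genuine content beyond this bookkeeping is justifying that a finite-order automorphism must be diagonal, i.e. that $b=0$. The point is that if $a$ is a primitive $m$-th root of unity, iterating $g$ shows that $g^m:x\mapsto x,\ y\mapsto y + (\text{some scalar})\,x$, and in characteristic zero that scalar is $b$ times a nonzero factor (a geometric-type sum $a^{m-1}+a^{m-2}+\cdots+1$ when $a\neq 1$, or $mb$ when $a=1$); hence $g^m = Id_A$ forces $b=0$ unless already $a=1$ and $m\cdot b =0$, again giving $b=0$. Since this diagonalization is exactly the reduction carried out in Case 2.1, I would simply cite that computation rather than redo it. I do not expect a serious obstacle here: the entire proposition is a direct reading-off of $\hdet\rho_a=a^2$ against the two defining conditions on $\AutSL(A)$ and $\AutSL_{-1}(A)$, with the finite-order hypothesis pinning $a$ to a root of unity. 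The main (very minor) subtlety is being careful that the finite-order condition genuinely excludes the nondiagonal automorphisms, which is handled by the characteristic-zero argument above.
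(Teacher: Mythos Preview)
Your proposal is correct and follows exactly the paper's approach: the proposition is stated immediately after Case 2.1 precisely as a record of that analysis, so citing Case 2.1 for the reduction to $\rho_a$ and then reading off $\hdet\rho_a=a^2$ is the intended argument.

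One small correction to your side sketch: iterating $g:x\mapsto ax,\ y\mapsto ay+bx$ gives $g^m(y)=a^m y + m\,a^{m-1} b\, x$ (this is a Jordan block, so the coefficient comes from the binomial expansion of $(aI+bN)^m$ with $N^2=0$), not a geometric sum. The geometric sum $1+a+\cdots+a^{m-1}$ would \emph{vanish} for $a$ a primitive $m$-th root of unity and hence would not force $b=0$; the correct coefficient $m\,a^{m-1}b$ does force $b=0$ in characteristic zero, which is what you need.
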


The following result \cite[Theorem 0.2]{KKZ2}
will be used in the remaining cases.

\begin{lemma}\cite{KKZ2}
\label{zzlem2.2}
Let $A$ be a skew polynomial ring $k_{p_{ij}}[x_1,\cdots,x_n]$, and
let $G$ be a finite subgroup of $\Aut(A)$. Then $A^G$ is AS Gorenstein
if and only if the $\hdet$ of the induced action of $G/R$ on $A^R$
is trivial, where $R$ is the subgroup of $G$ generated by all
reflections in $G$.
\end{lemma}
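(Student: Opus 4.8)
The plan is to combine two ingredients: the noncommutative Shephard--Todd--Chevalley (STC) theorem for skew polynomial rings, which lets me peel off the reflections, and the Jørgensen--Zhang local-cohomology description of fixed rings, which converts ``AS Gorenstein'' into a statement about a single semi-invariant module. First I would record that the reflections in $G$ are closed under conjugation: for $h\in G$ one has $Tr_A(hgh^{-1},t)=Tr_A(g,t)$, since $h$ acts invertibly on each graded piece $A_i$ and the ordinary trace is conjugation invariant, so the form in Definition \ref{zzdef1.7} is preserved. Hence $R$, the subgroup generated by all reflections, is normal in $G$, and since every element of $R$ fixes $A^R$ pointwise, $G$ acts on $A^R$ through a genuine $G/R$-action. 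Because $A$ is a skew polynomial ring, the STC theorem for such algebras gives that $A^R$ is again noetherian AS regular, and -- crucially -- that the induced $G/R$-action on $A^R$ contains no reflections (extracting this reflection-freeness of the quotient from the STC analysis is the structural input that makes the theorem work). As $A^G=(A^R)^{G/R}$, this makes ``$\hdet$ of $G/R$ on $A^R$'' meaningful via Lemma \ref{zzlem1.3} and reduces everything to the reflection-free case: for $B:=A^R$ noetherian AS regular and $K:=G/R$ finite acting on $B$ \emph{without reflections}, show that $B^K$ is AS Gorenstein if and only if the $\hdet$ of the $K$-action is trivial.

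For the reduced statement I would use that $B^K$ is always AS Cohen--Macaulay with a balanced dualizing complex, and that its canonical (top local cohomology) module is computed from that of $B$. Since $B$ is AS regular, $H^{\GKdim B}_{\fm}(B)$ is free of rank one up to shift, and by the very definition of the homological determinant $K$ acts on it by the natural action twisted by the character $\hdet^{-1}$; consequently the canonical module of $B^K$ is, up to a degree shift, the module $B^{K,\hdet^{-1}}=\{b\in B : g(b)=(\hdet g)^{-1}b\ \text{for all}\ g\in K\}$ of relative $\hdet^{-1}$-semi-invariants. The ``if'' direction is then immediate and needs no reflection hypothesis: when $\hdet$ is trivial this module is $B^K$ itself, so the canonical module is free of rank one and $B^K$ is AS Gorenstein. (This is the Watanabe direction, and one could even apply it to $B=A^R$ and $K=G/R$ directly.)

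The ``only if'' direction is the main obstacle. The concrete engine is the trace/Molien computation together with the \emph{equivariant} Gorenstein reciprocity that refines Lemma \ref{zzlem1.3}, namely the rational-function identity
\[ Tr_B(g,t^{-1}) = (-1)^{\GKdim B}\,t^{m}\,(\hdet g)^{-1}\,Tr_B(g^{-1},t) \]
for a fixed shift $m$ (specializing to the usual Hilbert-series reciprocity at $g=\mathrm{id}$). Applying this to $H_{B^K}(t)=\tfrac1{|K|}\sum_{g\in K}Tr_B(g,t)$ and reindexing $g\mapsto g^{-1}$ gives $H_{B^K}(t^{-1})=(-1)^{\GKdim B}t^{m}\cdot\tfrac1{|K|}\sum_{g}(\hdet g)\,Tr_B(g,t)$, whose right-hand sum is exactly the Hilbert series of $B^{K,\hdet^{-1}}$. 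Comparing with the Gorenstein reciprocity of $H_{B^K}(t)$ forces $B^{K,\hdet^{-1}}$ and $B^K$ to have proportional Hilbert series.

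The delicate point -- and what I expect to be the hard part -- is upgrading this numerical coincidence to the conclusion $\hdet\equiv 1$. I would argue that $B^K$ AS Gorenstein forces its canonical module, hence $B^{K,\hdet^{-1}}$, to be free of rank one, generated by a single homogeneous semi-invariant $f$ with $g(f)=(\hdet g)^{-1}f$; it then remains to show that the absence of reflections in the $K$-action on $B$ forces $\deg f=0$, so that $f$ is a scalar and $\hdet$ is trivial. In classical invariant theory this is the statement that, after removing pseudo-reflections, there are no nonconstant relative invariants dividing the canonical generator, equivalently that the character-to-divisor-class map is injective for a reflection-free action on a factorial ring. Establishing the analogous purity/branch-locus statement for skew polynomial rings -- controlling relative semi-invariants without the crutch of unique factorization of polynomials -- is precisely where the real work lies, and is exactly what distinguishes the reflection-free case from the general one.
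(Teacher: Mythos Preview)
The paper does not give its own proof of this lemma: it is stated with the citation \cite{KKZ2} (specifically \cite[Theorem 0.2]{KKZ2}) and used as a black box in the analysis of Case 2.2 onward. So there is no in-paper argument to compare your proposal against.

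Your outline is the right architecture and matches how such results are actually proved in \cite{KKZ2}: peel off reflections via the noncommutative STC theorem for $k_{p_{ij}}[x_1,\dots,x_n]$ (this is \cite{KKZ3}), reduce to a reflection-free action on an AS regular algebra, and then invoke the Watanabe-type criterion. The ``if'' direction you sketch is fine and is exactly \cite[Theorem 3.3]{JoZ}. However, your proposal is not a proof: you explicitly flag that the ``only if'' direction hinges on showing that a reflection-free action admits no nonconstant homogeneous $\hdet^{-1}$-semi-invariant generating the canonical module, and you do not supply that argument. That is indeed the substantive content, and it is not a formality---the classical proof uses the ramification/divisor description of relative invariants over a factorial base, which is not directly available here. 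In \cite{KKZ2} this step is handled by a careful analysis (in the Hopf-action setting) of the canonical module of $B^K$ and of how reflections are characterized via traces; your Molien/reciprocity computation showing that $H_{B^{K,\hdet^{-1}}}(t)$ and $H_{B^K}(t)$ agree up to a shift is correct and useful, but by itself it does not force the shift to be zero. Until you close that gap (e.g.\ by proving that a nontrivial character on a reflection-free $K$ cannot arise as the character of a rank-one graded $B^K$-summand of $B$ in degree $>0$), the proposal remains a plan rather than a proof.
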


\medskip
\noindent
{\bf Case 2.2}: $A=k_q[x,y]$ where $q\in k^\times$ and $q\neq \pm 1$.
In this case,  $\Aut(A)=k^\times \times k^\times$, and for any
$g\in \Aut(A)$, there are $a,b\in k^\times$ such that $g(x)=a x$ and
$g(y)=by$. The homological determinant of $g$ is $ab$ [Lemma
\ref{zzlem1.6}(2)]. The following proposition is
now clear.

\begin{proposition}
\label{zzpro2.3} Let $A=k_q[x,y]$ where $q\neq \pm 1$. Then
$$\AutSL(A)=\{g\in \Aut(A)\mid g(x)=a x, g(y)=a^{-1} y,\;
{\text{where}}\; a\in
k^\times\}$$
and
$$\AutSL_{-1}(A)=\{g\in \Aut(A)\mid g(x)=a x, g(y)
=\pm a^{-1} y, \; {\text{where}}\; a\in k^\times\}.$$
\end{proposition}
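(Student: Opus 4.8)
The plan is to read off both descriptions directly from the two facts recorded immediately above the statement: for $q\neq\pm1$ every graded automorphism of $A=k_q[x,y]$ is diagonal, so $g(x)=ax$ and $g(y)=by$ with $a,b\in k^\times$; and $\hdet g=ab$. Granting these, each of the two equalities reduces to solving a single scalar equation, so the proof is a direct unwinding of the definitions of $\AutSL(A)$ and $\AutSL_{-1}(A)$.

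First I would justify the formula $\hdet g=ab$. Apply Lemma \ref{zzlem1.6}(2) to the sequence $x_1=x,\;x_2=y$: the relation $yx=qxy$ shows that $x$ is a regular normal element of $A$, that the image of $y$ is a regular normal element of $A/(x)\cong k[y]$, and that $A/(x,y)=k$; since $g(x)=ax$ and $g(y)=by$, the lemma gives $\hdet g=ab$ with $\lambda_1=a$, $\lambda_2=b$. Equivalently, one computes $Tr_A(g,t)=\dfrac{1}{(1-at)(1-bt)}$ and extracts $\hdet g=ab$ from its leading term in $t^{-1}$ via Lemma \ref{zzlem1.3}.

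With this in hand the two assertions are immediate. By definition $g\in\AutSL(A)$ iff $\hdet g=1$, i.e. iff $ab=1$, equivalently $b=a^{-1}$, which is the first displayed description. Likewise $g\in\AutSL_{-1}(A)$ iff $\hdet g\in\{1,-1\}$, i.e. iff $ab=\pm1$, equivalently $b=\pm a^{-1}$, which is the second. There is no genuine obstacle here: all the content sits in the input that $\Aut(A)=k^\times\times k^\times$ when $q\neq\pm1$, and that hypothesis on $q$ is exactly what excludes the off-diagonal ``swap'' automorphisms that would otherwise appear (as they do when $q=\pm1$, cf. the $U$-type automorphisms arising for $A(\alpha,-1)$). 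Since the structure of $\Aut(A)$ is already established in the text preceding the statement, the proposition follows at once.
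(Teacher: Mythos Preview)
Your proposal is correct and follows exactly the approach in the paper: the text immediately preceding the proposition records that $\Aut(A)=k^\times\times k^\times$ for $q\neq\pm1$ and that $\hdet g=ab$ by Lemma~\ref{zzlem1.6}(2), after which the paper simply declares the proposition ``now clear.'' Your write-up merely fills in the routine verification of the hypotheses of Lemma~\ref{zzlem1.6}(2), so there is no substantive difference.
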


If $g\in \Aut(A)$ is of finite order, then $g(x)=a_1 x$ and $g(y)=b_1 y$
where $a_1,b_1$ are roots of unity. Every reflection of $k_q[x,y]$ 
for $q \neq \pm 1$ is of the
form $g: x\to x, y\to b_2y$, for some $b_2$, or $g: x\to a_2x, y\to y$,
for some $a_2$. Let $G$ be a finite subgroup of $\Aut(A)$ such that
$A^G$ is AS Gorenstein. Let $R$ be the subgroup of $G$ generated by
reflections in $G$ (note that $R$ is normal in $G$). Then $A^R$ is 
the subring of $A$ generated by
$x^n$ and $y^m$ for some $n,m>0$. The induced $G/R$-action, for any
$\bar{g}\in G/R$, on $B:= A^R$ is of the form
$$\bar{g}: x^n\to a x^n, \quad y^m\to b y^m$$
for some roots of unity $a,b$. The homological determinant of $\bar{g}$
is $ab$ (by Lemma \ref{zzlem1.6}(2)). By Lemma \ref{zzlem2.2}, the
homological determinant of $\bar{g}$ is 1. Hence $b=a^{-1}$. This
implies that $G/R$ is a cyclic group generated by some automorphism
$\phi: x^n\to a x^n, y^m\to a^{-1} y^m$, where $a$ is a primitive $w$th
root of unity for some $w\geq 1$. Thus $(A^R)^{G/R}$ is generated by
$X:=(x^n)^w,\; Y:=x^ny^m$, and $Z:=(y^m)^w$. Then
$$A^G=(A^R)^{G/R}\cong k_{p_{ij}}[X,Y,Z]/(XZ-q^{-nm{w \choose 2}}Y^w),$$
where $p_{12}=q^{nmw}$, $p_{13}=q^{nmw^2}$ and $p_{23}=q^{nmw}$. Note
that $C:=k_{p_{ij}}[X,Y,Z]$ is noetherian and AS regular of dimension
3 and $\Omega:= XZ-q^{-nm{w \choose 2}}Y^w$ is a regular normal
element of $C$. Therefore $A^G\cong C/(\Omega)$, which is a hypersurface.

\medskip
\noindent
{\bf Case 2.3:} $A=k[x,y]$.
 Here $\deg x=\deg y=1$ by assumption. In this
case $\AutSL(A)=SL_2({k})$, and some subgroups of $\AutSL_{-1}(A)$ 
will be discussed in the next section.

We need the following lemma.

\begin{lemma}
\label{zzlem2.4} Let $B=k[X,Y]$ where $\deg X=d_1>0$ and $\deg
Y=d_2>0$. If $G$ is a finite subgroup of $\Aut(B)$ with trivial
homological determinant, then $B^G$ is a {\rm{(}}commutative{\rm{)}} 
hypersurface.
\end{lemma}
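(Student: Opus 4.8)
The plan is to analyze the graded automorphism group of the weighted polynomial ring $B=k[X,Y]$ and to split into two cases according to whether $d_1=d_2$ or $d_1\neq d_2$. In both cases I will exhibit $B^G$ as $k[U,V,W]/(\Omega)$ with $k[U,V,W]$ a (weighted) commutative polynomial ring, necessarily noetherian and AS regular of dimension three, and $\Omega$ a regular element, which is a hypersurface in the sense of Definition \ref{zzdef1.5}. Throughout I will use that, for a diagonalizable $g$ with $g(X)=aX$ and $g(Y)=bY$, Lemma \ref{zzlem1.6}(2) gives $\hdet g=ab$, and that for a linear action on the standard graded $k[X,Y]$ Lemma \ref{zzlem1.3} identifies $\hdet g$ with $\det g$.

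\textbf{Case $d_1=d_2$.} Here $B$ is generated in a single degree, so every graded automorphism is linear and $G$ is a finite subgroup of $GL_2(k)$. Since $\hdet g=\det g$, the hypothesis that $\hdet$ is trivial on $G$ forces $G\subseteq SL_2(k)$. Then $B^G$ is the coordinate ring of the Kleinian (rational double point) singularity $k^2/G$, which is classically known to be a hypersurface of type $A$, $D$ or $E$; here I would invoke this classical result of Klein and du Val. Equivalently, one checks that such $B^G$ is a graded Gorenstein normal domain of Krull dimension $2$ and embedding dimension $3$, whence it is a codimension-one complete intersection, i.e.\ a hypersurface.

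\textbf{Case $d_1\neq d_2$.} Say $d_1<d_2$. Comparing degrees, any graded automorphism must send $X\mapsto aX$ with $a\in k^\times$, and $Y\mapsto bY+cX^m$ where $b\in k^\times$, $c\in k$, and $c=0$ unless $m:=d_2/d_1$ is a positive integer. The automorphisms with $a=b=1$ form a copy of the additive group $(k,+)$, which is torsion free in characteristic zero; hence the finite group $G$ injects into the torus of pairs $(a,b)$ and in particular is abelian. A twisted $1$-cocycle computation then shows that the ``off-diagonal'' data $g\mapsto c_g$ is a coboundary: since $|G|$ is invertible in $k$, the relevant $H^1$ vanishes, and one can choose $\lambda\in k$ so that $Y':=Y+\lambda X^m$ becomes a simultaneous eigenvector, $g(Y')=b_gY'$ for all $g\in G$. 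After this change of variables $G$ acts diagonally on $B=k[X,Y']$, so $\hdet g=a_gb_g$, and triviality of $\hdet$ gives $b_g=a_g^{-1}$ for every $g$. Thus $G$ embeds into $k^\times$ via $g\mapsto a_g$ and is cyclic, generated by some $g$ with $a_g=a$ a primitive $w$th root of unity. A monomial $X^i(Y')^j$ is $G$-invariant exactly when $i\equiv j\pmod w$, so $B^G$ is the semigroup algebra generated by $U:=X^w$, $V:=(Y')^w$ and $W:=XY'$, subject to the single relation $W^w=UV$. Hence $B^G\cong k[U,V,W]/(\Omega)$ with $\Omega=W^w-UV$ a regular central (hence normal) element of the weighted polynomial ring $k[U,V,W]$, exhibiting $B^G$ as a hypersurface.

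The step I expect to be the main obstacle is the subcase $d_1=d_2$ in which $G$ is a \emph{non-abelian} finite subgroup of $SL_2(k)$ (binary dihedral or binary polyhedral): there is no simultaneous diagonalization, and the hypersurface conclusion rests on the classical ADE classification of Kleinian singularities rather than on a direct monomial computation. The second, more routine, technical point is the cohomological vanishing used to diagonalize in the unequal-degree case; this is where characteristic zero is essential, since in positive characteristic the additive part need not be torsion free and the cocycle need not split.
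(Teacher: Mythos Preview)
Your proposal is correct and follows the same overall strategy as the paper: split on $d_1=d_2$ versus $d_1\neq d_2$, invoke the Klein/ADE classification in the equal-degree case, and diagonalize in the unequal-degree case to reduce to a cyclic group acting by $(a,a^{-1})$, yielding $B^G\cong k[U,V,W]/(UV-W^w)$.

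The one genuine methodological difference is in the diagonalization step. The paper argues directly: from $\hdet g=ab=1$ it first deduces that $b=1$ forces $c=0$ (finite order), so the off-diagonal entry is determined by $b$; this shows $G$ is abelian and hence cyclic, and then an explicit change of variable on a chosen generator $\phi$ kills $c$, handling the two subcases $b^{v+1}\neq 1$ and $b^{v+1}=1$ separately by hand. You instead package the diagonalization as vanishing of $H^1(G,k_\chi)$ for the one-dimensional module $k_\chi$ on which $G$ acts through the character $g\mapsto a_g^m b_g^{-1}$, using that $|G|$ is invertible in $k$. Your route is slightly slicker and avoids the case split, at the cost of invoking cohomology; the paper's route is entirely elementary and makes the role of finite order more visible. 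Both are fine, and both use characteristic zero in exactly the place you flagged.
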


\begin{proof} If $d_1=d_2$, the homological determinant agrees with
the usual determinant. By Klein's
classification of finite subgroups $G \subset SL_2({k})$,
\cite{Kl1, Kl2} (see \cite[Chapter 3: Theorem 6.17, p. 404]{Su} for 
general algebraically closed field) and the explicit description of 
the fixed subrings $A^G$, every $A^G$ is a hypersurface. All 
these fixed subrings $A^G$ will
be listed in Section \ref{zzsec3}.

Assume now that $d_1<d_2$. Every $g\in G$  must be of the form
\begin{equation}
\label{E2.4.1}\tag{E2.4.1}
g: X\to a X, \quad Y\to bY+ c X^v,
\end{equation}
where $a,b\in k^\times$, $c\in k$, and
$v=d_2/d_1$ (if it is an integer), and if $d_2/d_1$ is not an 
integer, then $c=0$. By Lemma
\ref{zzlem1.6}(2), $\hdet g=ab$, and since $\hdet$ is trivial by hypothesis,
$a=b^{-1}$. Hence $g: X\to b^{-1} X,\quad Y\to bY +cX^v$. If
$b=1$, then $c$ must be zero as $g$ is of finite order. This implies
that $c$ is uniquely determined by $b$. As a consequence, if $g,h\in
G$, then $ghg^{-1}h^{-1}$ is the identity, and hence $G$ is abelian, and
is determined by the restriction $G\mid_{kX}$. Since every
finite subgroup of $k^\times$ is cyclic, $G$ is generated by
a single automorphism $\phi\neq 1$. Write $\phi$ in the form of
\eqref{E2.4.1} (with $a=b^{-1}$).

If $b^{v+1}\neq 1$, $\phi(Y+c(b-b^{-v})^{-1}X^v)=
b(Y+c(b-b^{-v})^{-1}X^v)$. Replacing $Y$ by $Y+c(b-b^{-v})^{-1}X^v$,
we may assume that $c=0$. Next we consider the case when  $b^{v+1}=1$.
By induction, $\phi^s(Y)=b^s Y+s b^{s-1} cX^v$ for all $s\geq 1$.
So $\phi^{v+1}$ maps $X$ to $X$ and $Y$ to $Y+c(v+1)b^{-1}X^{v}$.
Since $\phi^{v+1}$ has finite order, $c=0$.
In both cases we may assume $c=0$ (after
choosing a new generator $Y$ if necessary). When $c=0$, $B^G$ is
isomorphic to $k[X^w, Y^w, XY]\cong k[X',Y',Z']/(X'Y'-(Z')^w)$,
where $w$ is the order of $b$. Hence $B^G$ is a hypersurface.
\end{proof}

To complete {\bf Case 2.3}, let $G$ be a finite subgroup of $\Aut(A)$
such that $A^G$ is Gorenstein. Then by
the classical Shephard-Todd-Chevalley Theorem $B:=A^R$ is a 
commutative polynomial
ring in two variables, $k[X,Y]$, with $\deg X$ and $\deg Y$ positive. 
By Lemma \ref{zzlem2.2},
the action of $G/R$ on $B$ has trivial homological determinant. By Lemma
\ref{zzlem2.4}, $A^G=B^{G/R}$ is a hypersurface.

\medskip
\noindent
{\bf Case 2.4:} $A=k_{-1}[x,y]$. By \cite[Theorem 5.5]{KKZ3} (also see
Lemma \ref{zzlem2.2}), $A^R$ is isomorphic to a skew polynomial ring
$k_{p}[X,Y]$ with
$\deg X>0$, $\deg Y>0$ and $p = \pm 1$. Since $A$ has PI-degree 2, 
the PI-degree
of $k_p[X,Y]$ is at most two. If the PI-degree of $A^R$ is 1, then $p=1$ and
$A^R$ is commutative, and by Lemma \ref{zzlem2.4}, $A^G=(A^R)^{G/R}$ is
a hypersurface. It remains to consider the case when the PI-degree is
$2$; then $p=-1$. The next lemma will be used to complete the proof 
in this case.

\begin{lemma}
\label{zzlem2.5} Let $B=k_{-1}[X,Y]$ where $\deg X>0$ and
$\deg Y>0$. If $G$ is a finite subgroup of $\Aut(B)$ with trivial
homological determinant, then $B^G$ is a hypersurface.
\end{lemma}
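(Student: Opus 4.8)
The plan is to mirror the structure of the commutative Lemma~\ref{zzlem2.4}: first pin down $\Aut(B)$, then compute $B^G$ explicitly, but now carrying along the sign coming from the relation $YX=-XY$. First I would classify the graded automorphisms of $B=k_{-1}[X,Y]$. Applying a candidate $g$ to the defining relation and comparing coefficients shows that the situation splits according to the degrees: when $\deg X\neq\deg Y$ every $g$ is diagonal, $g:X\to aX,\ Y\to eY$ (the anticommutation forces the coefficient of any off-diagonal term $X^{v}$, which was allowed in the commutative case, to vanish in characteristic zero), while when $\deg X=\deg Y$ the group $\Aut(B)$ also contains the ``swap'' maps $g:X\to bY,\ Y\to cX$. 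Using Lemma~\ref{zzlem1.6}(2) for the diagonal maps, and a direct trace computation for the swaps (one finds $Tr_B(g,t)=1/(1+bc\,t^{2\deg X})$, whence $\hdet g=bc$ by Lemma~\ref{zzlem1.3}), the hypothesis that $\hdet$ is trivial forces $ae=1$ in the diagonal case and $bc=1$ in the swap case.

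Next I would dispose of the purely diagonal situation, which is the only possibility when $\deg X\neq\deg Y$ and is also the starting point when the degrees agree. Exactly as in Lemma~\ref{zzlem2.4}, a finite group of diagonal maps with $ae=1$ is abelian with each generator of the form $\mathrm{diag}(a,a^{-1})$, $a$ a root of unity, hence cyclic, say generated by $\phi:X\to\zeta X,\ Y\to\zeta^{-1}Y$ with $\zeta$ a primitive $w$-th root of unity. Since the monomial $X^iY^j$ is $\phi$-fixed exactly when $i\equiv j\pmod w$, the fixed ring $B^{\langle\phi\rangle}$ is generated by $X':=X^w$, $Y':=Y^w$ and $Z':=XY$. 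A short calculation with $YX=-XY$ gives $Y'X'=(-1)^wX'Y'$, $Z'X'=(-1)^wX'Z'$, $Y'Z'=(-1)^wZ'Y'$ and $(Z')^w=(-1)^{\binom{w}{2}}X'Y'$, so that $B^{\langle\phi\rangle}\cong C/(\Omega)$ with $C=k_{p}[X',Y',Z']$ the skew polynomial ring of parameter $p=(-1)^w$ (a noetherian AS regular algebra of dimension three) and $\Omega=(Z')^w-(-1)^{\binom{w}{2}}X'Y'$ a regular normal element. A Hilbert-series comparison via Molien's theorem confirms that no further relations appear, so $B^G$ is a hypersurface in this case.

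It remains to treat $\deg X=\deg Y$ with $G$ containing a swap. Here I would set $G_0:=G\cap O$, the diagonal subgroup, which has index two in $G$ and to which the previous paragraph applies, so $B^{G_0}\cong C/(\Omega)$ as above. Since $bc=1$, any swap $\sigma\in G\setminus G_0$ satisfies $\sigma^2=\id$, and $\sigma$ induces on $B^{G_0}$ the involution $\bar\sigma:X'\mapsto\lambda Y',\ Y'\mapsto\lambda^{-1}X',\ Z'\mapsto-Z'$ for a scalar $\lambda=b^{w}$ that can be normalized to $1$ by rescaling $X',Y'$. Thus $B^G=\bigl(C/(\Omega)\bigr)^{\langle\bar\sigma\rangle}$, and the task is to present this $\mathbb{Z}/2$-invariant ring as a hypersurface.

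I expect this last step to be the main obstacle. One must produce invariant generators of $C^{\langle\bar\sigma\rangle}$ (for instance $X'+Y'$, $(Z')^2$, and $Z'(X'-Y')$, together with $X'Y'$ when $w$ is even), determine the single relation they satisfy after imposing $\Omega$, and verify---separately for $w$ even ($p=1$) and $w$ odd ($p=-1$)---that the resulting algebra is again of the form $C'/(\Omega')$ with $C'$ a noetherian AS regular algebra of dimension three and $\Omega'$ regular normal. The parity of $w$ governs whether $X'Y'$ is $\bar\sigma$-invariant (indeed $\bar\sigma(X'Y')=Y'X'=(-1)^wX'Y'$), and hence which generating set and relation occur; checking the regularity of the auxiliary algebra $C'$ and the normality of $\Omega'$ in each parity, guided by the ADE/involution picture described in the introduction, is the computational heart of the argument, with a final Molien count confirming that the exhibited presentation is complete.
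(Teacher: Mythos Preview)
Your overall strategy matches the paper's: split by whether $\deg X=\deg Y$, handle the diagonal subgroup as a cyclic group yielding $k_{p_{ij}}[X',Y',Z']/(X'Y'-(-1)^{\binom{w}{2}}(Z')^w)$ with $p=(-1)^w$, and in the equal-degree case treat $G$ as an index-two extension of its diagonal part by a swap. Your trace and $\hdet$ computations are correct, and your observation that in the unequal-degree case the anticommutation already forces $c=0$ is a clean shortcut.

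The gap is in your final paragraph. You correctly identify that the swap case is the crux, but you stop at a plan. Two comments on that plan:

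First, the generator $X'Y'$ you list ``when $w$ is even'' is redundant: the relation $(Z')^w=(-1)^{\binom{w}{2}}X'Y'$ already expresses it as $\pm X_3^{w/2}$ where $X_3=(Z')^2$. The paper's generating set is exactly $X_1:=X'+Y'$, $X_2:=(X'-Y')Z'$, $X_3:=(Z')^2$ in both parities.

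Second, the ``ADE/involution picture'' is a red herring here; that machinery concerns involutions on $k[t_1,t_2]^G$ for $G\subset SL_2(k)$, not on $k_{-1}[X,Y]$. The paper simply carries out the computation directly. For $w$ even one finds the commutative presentation
\[
B^G\cong k[X_1,X_2,X_3]\big/\bigl(X_2^2-X_1^2X_3+4(-1)^{\binom{w}{2}}X_3^{w/2+1}\bigr),
\]
while for $w$ odd one gets $B^G\cong C/(\Omega)$ with
\[
C=k\langle X_1,X_2,X_3\rangle\big/\bigl([X_3,X_1],\,[X_3,X_2],\,X_2X_1+X_1X_2+4(-1)^{\binom{w}{2}}X_3^{(w+1)/2}\bigr)
\]
(an iterated Ore extension, hence AS regular of dimension three) and $\Omega=X_2^2+X_1^2X_3$ regular normal. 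So your proposed generators and the parity split are right, but you need to actually exhibit $C'$ and $\Omega'$ and verify AS regularity and normality; the paper's explicit presentations above are the target, and a Molien count confirms completeness.
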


\begin{proof}
If $\deg X=\deg Y$, then, with respect to the basis $\{X,Y\}$,
$$\Aut(A)=\biggl\{\begin{pmatrix} a& 0\\ 0& b\end{pmatrix},
\begin{pmatrix} 0& c\\ d& 0\end{pmatrix}: a,b,c,d\in k^\times\biggr\}$$
 by \cite[Lemma 1.12]{KKZ5}.
It follows from the classification in \cite{CKWZ} that, if the homological
determinant of $G$-action is trivial, then $G$ is either
\begin{enumerate}
\item[(i)]
$C_n=\biggl\langle \begin{pmatrix} a& 0
\\0&a^{-1}\end{pmatrix}\biggr\rangle,$ where $a$ is a primitive $n$th root of
unity, or
\item[(ii)]
$D_{2n}=\biggl\langle \begin{pmatrix} a& 0
\\0&a^{-1}\end{pmatrix}, \begin{pmatrix} 0& 1
\\1&0\end{pmatrix}\biggr\rangle,$ where $a$ is a primitive $n$th root of unity.
\end{enumerate}\

Case (i): If $G=C_n$, then $B^G$ is generated by $X_1:=X^n, X_2:=Y^n$ and
$X_3:=XY$. Similar to the commutative case, $B^G$ is isomorphic to
$$k_{p_{ij}}[X_1,X_2,X_3]/ (X_1X_2-(-1)^{{n \choose 2}} X_3^n),$$
where $p_{ij}$ are 1 when $n$ is even, and $p_{ij}$ are $-1$
when $n$ is odd. So $B^G$ is a hypersurface.

Case (ii):
If $G=D_{2n}$, where $n$ is a positive even integer, then $B^G$ is
generated by
$$X_1:=X^n+Y^n, \; X_2:=(X^n-Y^n)XY, \quad {\text{and}}\quad
X_3:= (XY)^2,$$ and is isomorphic to $k[X_1, X_2,X_3]/(\Omega)$,
where $\Omega:=X_2^2-X_1^2 X_3+4(-1)^{{n \choose
2}}X_3^{\frac{n}{2}+1}$ is a regular element of $k[X_1, X_2,X_3]$.
Therefore $B^G$ is a commutative hypersurface.

If $G=D_{2n}$, where $n$ is a positive odd integer, then
$B^G$ is generated by
$$X_1:=X^n+Y^n, \; X_2:=(X^n-Y^n)XY, \quad {\text{and}}\quad
X_3:= (XY)^2,$$ and $B^G$ is isomorphic to $C/(\Omega)$, where
$$C=k\langle X_1, X_2,X_3\rangle/
([X_3,X_1]=[X_3,X_2]=0,X_2X_1+X_1X_2=-4(-1)^{{n\choose 2}}
X_3^{\frac{n+1}{2}})$$
and $\Omega:=X_2^2+X_1^2 X_3$.
Therefore $C$ is of the form
$$k[X_1,X_3][X_2; \sigma, \delta],$$
and so $C$
is noetherian, AS regular of dimension 3,
and $\Omega$ is a regular normal element of $C$. Hence
$B^G$ is a hypersurface.

Assume now that $\deg X=d_1<d_2=\deg Y$. Every $g\in G$ is of the
form \eqref{E2.4.1} and $v$ is an odd integer if $c\neq 0$
(otherwise $g(X)g(Y)\neq -g(Y)g(X)$). Using the argument in
the proof of Lemma \ref{zzlem2.4}, one can assume that $c=0$.
Then $B^G$ is isomorphic to $k_{p_{ij}}[X_1,X_2,X_3]/
(X_1X_2-(-1)^{{n \choose 2}}X_3^n)$, where $p_{ij}$ is either 1
or $-1$. (See the argument in case (i).) Therefore $B^G$ is a
hypersurface, completing the proof of Lemma \ref{zzlem2.5}.
\end{proof}

By combining all these cases, $A^G=B^{G/R}$ is a hypersurface, where
$B=A^R$. This finishes the proof of ``(1) $\Longrightarrow$ (2)'', and
therefore Theorem \ref{zzthm0.1} follows.
\end{proof}

\begin{remark}
\label{zzrem2.6} A special case of Lemma \ref{zzlem2.5}
is when $G=D_{2n}$ and $n=1$. Then
$G$ is generated by the matrix $\begin{pmatrix} 0& 1
\\1&0\end{pmatrix}$. The fixed subring $B^G$ is generated by
$X_1:=X+Y$ and $X_2:= (X-Y)XY$, as $X_3:=(XY)^2= -\frac{1}{4}(X_1X_2+X_2X_1)$
is generated by $X_1$ and $X_2$. In this case, $B^G\cong C/(\Omega)$
where $C$ is the down-up algebra $A(0,1)$
$$C:=k\langle X_1, X_2\rangle/(X_1^2X_2-X_2X_1^2,X_2^2X_1-X_1X_2^2),$$
and where $\Omega:=X_2^2-\frac{1}{4}X_1^2(X_1X_2+X_2X_1)$ is a regular
central element in $C$.

When $G=D_{2}$ and $\deg X=\deg Y=1$, the fixed subring $B^G$ is generated
by two elements of degrees $1$ and $3$, respectively, and hence the classical
Noether bound \cite{No}, $|G| = 2$, on the degrees of generators of $A^G$, 
fails.
\end{remark}

\section{Involutions on ADE singularities}
\label{zzsec3}
The action of $GL_2({k})$ on the vector space
$kt_1+kt_2$ extends naturally to an action of $GL_2(k)$ on the
commutative polynomial ring $k[t_1,t_2]$. The main goal of this
section is to find all finite subgroups
$H\subset GL_2({k})$ that satisfy the following
exact sequence
\begin{equation}
\label{E3.0.1}\tag{E3.0.1}
1\to H\cap SL_2({k})\to H\xrightarrow{\det} \{\pm 1\}\to 1,
\end{equation}
and to characterize the action of $g\in H\setminus G$,
where $G=H\cap SL_2({k})$, on the fixed subring
$k[t_1,t_2]^G$. This analysis is needed for the proof of the main
result Theorem \ref{zzthm0.3}.

Throughout this paper we will use the following matrices, where 
$\epsilon$ is a root of unity, so we record them for
future reference in the table below.


$$\begin{array}{|c|c|c|c|}
\hline
 & & & \\
{\mathbb I}:=\begin{pmatrix} 1& 0\\0&1\end{pmatrix} & 
s:= \begin{pmatrix} 0& 1\\1&0\end{pmatrix} & 
s_{1}:= \begin{pmatrix} 0& 1\\-1&0\end{pmatrix} & 
s_{2}:= \begin{pmatrix} 0& -1\\1&0\end{pmatrix}\\
 & & &\\
\hline
 & & & \\
 d_1:=\begin{pmatrix} -1& 0\\0&1\end{pmatrix} &
 d_2:= \begin{pmatrix} 1& 0\\0&-1\end{pmatrix} &
 c_{\epsilon}:= \begin{pmatrix} \epsilon& 0\\0& \epsilon^{-1}\end{pmatrix} &
c_{\epsilon,-}:= \begin{pmatrix} -\epsilon& 0\\0& \epsilon^{-1}\end{pmatrix}\\
  & & & \\
\hline
\end{array}$$

\begin{center}
Table 1: Elements of $GL_2(k)$
\end{center}

\medskip

Next we recall Klein's classification of the finite subgroups of 
$SL_2({k})$ (up to conjugation) \cite{Kl1, Kl2,Su}, and the generators 
that will be used in our work.  The generators for
the type $E$ groups will not be needed in this paper.

\begin{enumerate}
\item[($A_n$)]
The cyclic groups of order $n+1$, $C_{n+1}$, generated by $c_{\epsilon}$,
where $\epsilon$ is a primitive
$(n+1)$st root of unity, for $n\geq 1$.
\item[($D_n$)]
The binary dihedral groups $BD_{4(n-2)}$ of order $4(n-2)$ generated by
$s_1$ and $c_{\epsilon}$,
where $\epsilon$ is a primitive $2(n-2)$nd root of unity, for $n\geq 4$.
\item[($E_6$)]
The binary tetrahedral group $BT_{24}$ of order $24$.
\item[($E_7$)]
The binary octahedral group $BO_{48}$ of order $48$.
\item[($E_8$)]
The binary icosahedral group $BI_{120}$ of order $120$.
\end{enumerate}

The fixed subrings $k[t_1,t_2]^G$, where $G$ is a finite subgroup of
$SL_2({k})$, are described as ADE-type singularities, and are also
known as du Val singularities, Kleinian singularities, simple
surface singularities, or rational double points. For each finite
subgroup $G\subset SL_2({k})$, the fixed subring
$S:=k[t_1, t_2]^G$ is a hypersurface of the form
$k[x,y,z]/(f)$, namely, a commutative polynomial algebra generated by variables
$x,y,z$ modulo one relation $f$. We give the relation $f$ for each
type of group \cite[Theorem 6.18]{LW}:
\begin{enumerate}
\item[($A_n$)]
$f= x^2+y^2+z^{n+1}$ for $n\geq 1$ with $\deg x=n+1, \deg y=n+1$
and $\deg z=2$;
\item[($D_n$)]
$f=x^2+y^2z+z^{n-1}$ for $n\geq 4$ with $\deg x=2(n-1), \deg y=2(n-2)$
and $\deg z=4$.
\item[($E_6$)]
$f=x^2+y^3+z^4$ with $\deg x=12$, $\deg y=8$ and $\deg z=6$.
\item[($E_7$)]
$f=x^2+y^3+yz^3$ with $\deg x=18$, $\deg y=12$ and $\deg z=8$.
\item[($E_8$)]
$f=x^2+y^3+z^5$ with $\deg x=30$, $\deg y=20$ and $\deg z=12$.
\end{enumerate}

The first step in determining the action of $g\in H\setminus G$
is to compute its homological determinant on $k[t_1,t_2]^G$.

\begin{lemma}
\label{zzlem3.1}
Suppose $H$ is a finite subgroup that satisfies \eqref{E3.0.1}.
Let $g\in H\setminus G$, where $G=H\cap SL_2({k})$.
Then the homological determinant of $g$ on $k[t_1,t_2]^G$ is $-1$.
\end{lemma}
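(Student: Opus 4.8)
The plan is to compute the trace function $Tr_S(g,t)$ of $g$ acting on $S:=k[t_1,t_2]^G$ and to read off $\hdet g$ from its top-degree term via Lemma \ref{zzlem1.3}. First I observe that, because $H$ is a group containing $G=H\cap SL_2(k)$ as a normal subgroup (it is the kernel of $\det$ in \eqref{E3.0.1}), every $g\in H\setminus G$ normalizes $G$ and hence induces a graded automorphism of $S$. Moreover $S$ is one of the two-dimensional ADE hypersurfaces $k[x,y,z]/(f)$ listed above, so it is a commutative Gorenstein ring of Krull dimension two, and therefore AS Gorenstein of injective dimension $d=2$. Thus Lemma \ref{zzlem1.3} applies and yields
$$Tr_S(g,t)=(-1)^2(\hdet g)^{-1}t^l+\text{lower degree terms}=(\hdet g)^{-1}t^l+\text{lower degree terms},$$
when $Tr_S(g,t)$ is expanded as a Laurent series in $t^{-1}$. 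It therefore suffices to identify the top-degree term of $Tr_S(g,t)$.

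Next I would invoke the Molien/Reynolds formula for the trace of a normalizing automorphism on a fixed ring. The projection $e=\tfrac{1}{|G|}\sum_{h\in G}h$ maps $k[t_1,t_2]$ onto $S$, and $g$ preserves both $S$ and $\ker e$ (the latter because, as $g$ normalizes $G$, we have $e(gv)=g\,e(v)$). A degree-by-degree trace computation then gives
$$Tr_S(g,t)=\frac{1}{|G|}\sum_{h\in G}Tr_{k[t_1,t_2]}(gh,t)=\frac{1}{|G|}\sum_{h\in G}\frac{1}{\det(\mathbb{I}-t\,gh)},$$
where the last equality uses that for a linear automorphism of a polynomial ring in two degree-one variables the trace function is $1/\det(\mathbb{I}-t\,gh)$.

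Now for the key point. By the exact sequence \eqref{E3.0.1}, every $g\in H\setminus G$ has $\det g=-1$, while every $h\in G\subset SL_2(k)$ has $\det h=1$, so $\det(gh)=-1$ for each summand. Writing the eigenvalues of $gh$ as $\mu,\mu'$ with $\mu\mu'=\det(gh)=-1$ and expanding $1/\bigl((1-\mu t)(1-\mu' t)\bigr)$ as a Laurent series in $t^{-1}$, the top-degree term of each summand is $(\mu\mu')^{-1}t^{-2}=-t^{-2}$. Since every summand contributes the same leading coefficient $-1$, there is no cancellation, and averaging over $G$ gives $Tr_S(g,t)=-t^{-2}+\text{lower degree terms}$. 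Comparing with the expression supplied by Lemma \ref{zzlem1.3} forces $l=-2$ and $(\hdet g)^{-1}=-1$, hence $\hdet g=-1$, as claimed.

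The main obstacle is justifying the Reynolds trace identity, which requires the normalization property $g G g^{-1}=G$ and the resulting invariance of $\ker e$, together with correctly pinning down that $S$ has injective dimension exactly $d=2$ so that the sign $(-1)^d=+1$ comes out right. Once $\det(gh)=-1$ is recorded, the eigenvalue bookkeeping in the Laurent expansion and the absence of cancellation are routine.
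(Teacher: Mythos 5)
Your argument is correct, but it takes a genuinely different route from the paper. The paper argues by contradiction with AS indices: since $G\subset SL_2(k)$, the AS index of $S=k[t_1,t_2]^G$ equals that of $k[t_1,t_2]$ by \cite[Lemma 2.6(b)]{CKWZ}; if $\hdet|_S g$ were $1$, then $S^g=A^H$ would again be AS Gorenstein with the same AS index, and the same lemma would force $\det\phi=1$ for all $\phi\in H$, contradicting \eqref{E3.0.1}. You instead compute $Tr_S(g,t)$ directly via the averaging formula $Tr_S(g,t)=\frac{1}{|G|}\sum_{h\in G}Tr_{k[t_1,t_2]}(gh,t)$ (this is exactly \cite[Lemma 5.2]{JiZ}, which the paper itself invokes as \eqref{E5.8.1} in the proof of Lemma \ref{zzlem5.8}, and the normality of $G$ in $H$ needed to apply it is immediate from \eqref{E3.0.1}); since every $gh$ has determinant $-1$, each summand has leading Laurent term $-t^{-2}$, there is no cancellation, and Lemma \ref{zzlem1.3} with $d=2$ gives $(\hdet g)^{-1}=-1$. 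The prerequisites you use (that $S$ is AS Gorenstein of injective dimension $2$ and that $g$ is $k$-rational so Lemma \ref{zzlem1.3} applies) are supplied by \cite[Theorem 3.3]{JoZ} and hypotheses (H1.4.1)--(H1.4.2). Your computation is more self-contained and in fact yields the sharper statement $\hdet|_S g=\det g$; the paper's argument is shorter given the cited lemmas and avoids any eigenvalue bookkeeping, which is why it transfers more readily to the noncommutative analogues treated in Section \ref{zzsec4}.
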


\begin{proof} Let $A=k[t_1,t_2]$. Since $G\subset SL_2({k})$,
$S:=A^G$ is AS Gorenstein \cite[Theorem 3.3]{JoZ} and the AS index of
$S$ is equal to the AS index of $A$ \cite[Lemma 2.6(b)]{CKWZ}.

It follows from \eqref{E3.0.1} that the induced action of $g$ on $S$
is an involution. Since the homological determinant is a homomorphism, 
$\hdet|_{S} g$ is either 1 or $-1$. We will
show that it is not $1$ by contradiction, and so we assume that 
$\hdet|_{S} g =1$.
Then $S^g$ is AS Gorenstein \cite[Theorem 3.3]{JoZ} and the AS index of
$S^g$ is equal to the AS index of $S$ by \cite[Lemma 2.6(b)]{CKWZ}.
Thus the AS index of $A^H(=S^g)$ is equal to the AS index of $A$.
It follows from \cite[Lemma 2.6(b)]{CKWZ} that $\det \phi=1$ for all
$\phi\in H$, a contradiction. Therefore $\hdet\mid_S g=-1$.
\end{proof}

In our analysis of the invariants $A^H :=k[t_1,t_2]^H$, we will 
consider $A^H = (A^G)^h:=S^h$, where $G := H \cap SL_2(k)$, $S:=A^G$, 
and $h$ is the automorphism of $S$ induced by $g \in H \setminus G$.  
Hence it suffices to consider $h$ acting on the algebras
$S:=k[x,y,z]/(f)$ listed as above.  By Lemma \ref{zzlem3.1} we need 
to describe only involutions of $S$ with negative homological 
determinant.  We now consider each of these cases;
for most of the cases we will not give an explicit description of the group
$H$, as it will not be needed in later computation.

\medskip
\noindent
{\bf Case $A_{n}$:}  For the case where $H \cap SL_2(k)$ is of type 
$A_n$, in Lemma \ref{zzlem3.2} we compute
the groups $H$ and determine that there are five cases.  Then in 
Lemma \ref{zzlem3.3} we compute, for each case, the fixed subring 
under $G$ (which is a hypersurface) and give a description of the 
automorphism $h$.


\begin{lemma}
\label{zzlem3.2} If $H$ is a finite subgroup of $GL_2(k)$
satisfying \eqref{E3.0.1} such that $G: =H\cap SL_2(k)$
is cyclic, then, up to conjugation, $H$ is one of the following groups.
\begin{enumerate}
\item[($A_{n,1}$)]
$H=\{ {\mathbb I}, d_{1}\}=C_2$.
\item[($A_{n,2}$)]
$H$ is generated by $d_1$ and $c_{\epsilon}$, where
$\epsilon$ is a primitive $2n$th root of unity,
and $H=C_{2n}\times C_2$.
\item[($A_{n,3}$)]
$H$ is generated by $d_1$ and $c_{\epsilon}$, where
$\epsilon$ is a primitive $n$th root of unity and $n$
is an odd integer. In this case $H=C_{n}\times C_2\cong
C_{2n}$.
\item[($A_{n,4}$)]
$H$ is generated by $c_{\epsilon, -}$,
where $\epsilon$ is a primitive $4n$th root of unity, and
$H=C_{4n}$. Note that $H$ does not contain any (nontrivial)
reflection of the vector space $kt_1+kt_2$.
\item[($A_{n,5}$)]
$H$ is generated by $s$ and $c_{\epsilon}$, where $\epsilon$ is a 
primitive $n$th root of unity, and $H$ is the dihedral group 
$D_{2n}$ of order $2n$.
\end{enumerate}
Note that, except for case {\rm{($A_{n,4}$)}}, $H$ contains either
$d_1$ or $s$, which are classical reflections of $kt_1+kt_2$.
\end{lemma}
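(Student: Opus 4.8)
The plan is to normalize the cyclic part $G$ first and then classify the determinant-$(-1)$ coset of $H$ over it. Since $G=H\cap SL_2(k)$ is a finite cyclic subgroup of $SL_2(k)$, Klein's classification (recalled above, type $A$) lets me assume, after conjugation, that $G=\langle c_\epsilon\rangle\cong C_m$ with $\epsilon$ a primitive $m$th root of unity. The exact sequence \eqref{E3.0.1} gives $[H:G]=2$, so choosing any $g\in H\setminus G$ I have $H=G\sqcup gG$, $\det g=-1$, and $g$ of finite order normalizing $G$. Conjugation by $g$ sends $c_\epsilon$ to an element of $G$ with the same eigenvalue set $\{\epsilon,\epsilon^{-1}\}$, so $g c_\epsilon g^{-1}\in\{c_\epsilon,c_\epsilon^{-1}\}$ (the two coincide when $m\le 2$). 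This dichotomy drives the whole argument.

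First I would treat the \emph{inverting} case $g c_\epsilon g^{-1}=c_\epsilon^{-1}$, which is only informative for $m\ge 3$. Writing $g^2=c_\epsilon^k\in G$ and using that $g$ centralizes its own square forces $c_\epsilon^{-k}=c_\epsilon^{k}$, i.e.\ $c_\epsilon^{2k}=\mathbb{I}$, so $g^2\in\{\mathbb{I},-\mathbb{I}\}$; the possibility $g^2=-\mathbb{I}$ makes $g$ a scalar (its eigenvalues both equal $\pm i$ once $\det g=-1$ is imposed), which would commute with $c_\epsilon$ and contradict inversion for $m\ge 3$. Hence $g^2=\mathbb{I}$, and in the eigenbasis of $c_\epsilon$ a determinant-$(-1)$ involution interchanging the two eigenlines is antidiagonal; conjugating by a diagonal matrix (which fixes $c_\epsilon$) normalizes it to $s$. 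Thus $H=\langle c_\epsilon,s\rangle=D_{2m}$, which is exactly $(A_{n,5})$ with $n=m$.

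Next I would treat the \emph{commuting} case $g c_\epsilon g^{-1}=c_\epsilon$. Then $g$ lies in the centralizer of $c_\epsilon$, which for $m\ge 3$ is the diagonal torus, so $g=\begin{pmatrix} a & 0\\ 0 & -a^{-1}\end{pmatrix}$ with $a$ a root of unity, and $H$ is an abelian diagonal group of order $2m$ (the residual cases $m\le 2$ I would check directly). A determinant-$(-1)$ element of $H$ is a reflection of $kt_1+kt_2$ exactly when it equals $d_1$ or $d_2$, i.e.\ when $a\in\langle\epsilon\rangle$ or $-a\in\langle\epsilon\rangle$, and the argument now splits on whether such a reflection exists. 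If it does, then after replacing $g$ by a suitable $c_\epsilon^i g$ and possibly conjugating by $s$ I can take $g=d_1$, whence $H=\langle c_\epsilon\rangle\times\langle d_1\rangle\cong C_m\times C_2$: this is $(A_{n,2})$ when $m=2n$ is even, $(A_{n,3})$ when $m=n$ is odd, and the degenerate $m=1$ gives $(A_{n,1})$. If no reflection exists, then $m$ must be even (for $m$ odd the complementary $C_2$ always produces a reflection), the extension $1\to C_m\to H\to C_2\to 1$ is nonsplit, so $H\cong C_{2m}$ is cyclic; choosing a generator and putting it into normal form yields $H=\langle c_{\epsilon,-}\rangle=C_{4n}$ with $m=2n$ and $G=C_{2n}$, which is $(A_{n,4})$, and a short computation confirms it contains no reflection.

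The main obstacle is the bookkeeping in the commuting/diagonal case: matching the presence of a reflection to the splitting of the extension (equivalently to the coset of $a$ modulo $\langle\epsilon\rangle$), normalizing the chosen generator to $d_1$ or to $c_{\epsilon,-}$, and tracking how the parity of $m$ and the location of $-\mathbb{I}$ determine which of $(A_{n,2})$, $(A_{n,3})$, $(A_{n,4})$ occurs together with the correct index $n$ — in particular the shift whereby $(A_{n,4})$ has $H=C_{4n}$ while $G=C_{2n}$. By contrast, the dihedral case is routine once the scalar possibility $g^2=-\mathbb{I}$ has been excluded.
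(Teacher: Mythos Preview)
Your argument is correct and follows essentially the same route as the paper: normalize $G=\langle c_\epsilon\rangle$, observe that any $g\in H\setminus G$ must be diagonal or antidiagonal (you phrase this as the commute/invert dichotomy for the conjugation action on $c_\epsilon$, the paper extracts it from the matrix equation $gc_\epsilon g^{-1}=c_\epsilon^{w}$), and then classify each case. Your organization of the diagonal case via ``does $H$ contain a reflection / does the extension split'' is a tidier repackaging of the paper's analysis of the order of the diagonal entry, but the content is the same.
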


\begin{proof} Let $g$ be any element in $H \setminus G$.

If $G$ is trivial, then $g^2={\mathbb I}$. Since $\det g=-1$
[Lemma \ref{zzlem3.1}], and
$g=d_1$ up to conjugation. This is case ($A_{n,1}$).

If $G$ has order $2$, then $G=\{{\mathbb I},-{\mathbb I}\}$. In this
case $g$ has order 2 or $4$. If $g$ has order 2, then $H$ is as in case
($A_{n,2}$) for $n=1$. If $g$ has order 4, then $g^2=-{\mathbb I}$,
so $g=c_{\epsilon,-}$, where $\epsilon$ is either $i$ or $-i$.
This is case ($A_{n,4}$) for $n=1$.

Now assume that $G$ has order $n>2$. Up to conjugation,
$G$ is generated by $c_{\epsilon}$, for $\epsilon$ a primitive $n$th
root of unity. Write $g$ as $\begin{pmatrix}a&b\\c&d\end{pmatrix}$.
Since $gc_{\epsilon}g^{-1}=c_{\epsilon}^w$ for some $w$, a matrix
computation shows that either $a=d=0$ or $b=c=0$.  If $b=c=0$,
$g=\begin{pmatrix}-a&0\\0&a^{-1}
\end{pmatrix}$ for some root of unity $a$. Since $g^2\in G$,
the order of $a$ divides $2n$. If the order of $a$ divides $n$, then
$\begin{pmatrix}a&0\\0&a^{-1}\end{pmatrix}$ is in $G$. So we can
choose $g=d_1$, which is case ($A_{n,2}$) or ($A_{n,3}$).
Suppose next that $a$ has order $2k$,
where $k$ divides $n$ but $2k$ does not divide $n$, so we can write 
$2k = 2^{m+1}k'$ with $k'$ not divisible by $2$, and
$n=2^m\ell$ for $\ell$ not divisible by $2$.  Then
the order of $g^{k'}c_\epsilon^{2^m}$ is $2n$, so we can assume 
$a$ has order $2n$. Finally, if $a$ has
order $2n$, then $H$ is generated by $g$. This is either case ($A_{n,4}$)
or case ($A_{n,3}$), depending upon the parity of $n$.

If $a=d=0$, up to conjugation we have $g=s$. This is case ($A_{n,5}$).
\end{proof}

Let $S$ be the fixed subring $k[t_1, t_2]^{G}$, where $G$ is as
in Lemma \ref{zzlem3.2}.
Then (except in case $(A_{n,1})$ where $G = \{\mathbb{I}\}$),  we 
have $S$ is isomorphic to $k[X_1, X_2, X_3]/(X_1X_2-X_3^{m})$,
where $m$ is the order of the cyclic group $G$ (in Lemma
\ref{zzlem3.2} cases $(A_{n,3})$ and $(A_{n,5})$ we have  $G=C_{n}$, while
in cases $(A_{n,2})$ and $(A_{n,4})$ we have  $G=C_{2n}$). Let $h$ be 
the graded algebra automorphism of $S$ induced by
$g\in H\setminus G$. We next describe $h$ in each of the five cases 
$(A_{n,1})$-$(A_{n,5})$  of Lemma \ref{zzlem3.2},
and we summarize the results in Table 2 below.

\medskip

\begin{enumerate}
\item[($A_{n,1}$):]
Here $G =\{ \mathbb{I} \}$, so $S=k[t_1, t_2]$ and $h: t_1\to -t_1, 
t_2\to t_2$.
The fixed subring $S^h$ is $k[t_1^2, t_2]$.
\end{enumerate}

\medskip

To obtain the hypersurface $f$ given in the classification of Kleinian 
singularities above Lemma \ref{zzlem3.1}, in some of the cases we will 
change variables, and set $$x:=\frac{1}{\sqrt{2}}(X_1+X_2),\;\;
y:=\frac{1}{\sqrt{2}} (X_1-X_2) \text{ and } z:=\xi X_3,$$ where $\xi^n=-1$ (or
$\xi^{2n}=-1$).

\medskip

\begin{enumerate}
\item[($A_{n,2}$):]
Let $X_1:=t_1^{2n} , X_2:=t_2^{2n}$ and $X_3:=t_1t_2$ then\\
$S=k[X_1, X_2, X_3]/(X_1X_2-X_3^{2n})$ and
$$h: X_1\to X_1, \quad X_2\to X_2, \quad X_3\to -X_3.$$
Using the new variables $x,y,z$, we have
$S=k[x,y,z]/(x^2+y^2+z^{2n})$ and
$$h: x\to x, \quad y\to y, \quad z\to -z.$$
In this case we have the same relation $f$ given for the Kleinian singularity
for $C_{2n}$.  Note that $h$ lifts to a reflection of $k[x,y,z]$.
The fixed subring $S^h$ is a subring generated by $x,y$, and $z^2$,
which is isomorphic to a hypersurface $k[x,y,z']/(x^2+y^2+(z')^{n})$.
\item[($A_{n,3}$):]
Let $X_1:=t_1^n , X_2:=t_2^{n}$ and $X_3:=t_1t_2$,
then $S=k[X_1, X_2, X_3]/(X_1X_2-X_3^{n})$, where $n$ is odd, and
$$h: X_1\to -X_1, \quad X_2\to X_2, \quad X_3\to -X_3.$$
In this case, $h$ does not lift to a reflection of $k[X_1,X_2,X_3]$,
and $h$ does not preserve the relation $f$ given in the description of the
Kleinian singularities above. 
\item[($A_{n,4}$):]
Let $X_1:=t_1^{2n} , X_2:=t_2^{2n}$ and $X_3:=t_1t_2$, then\\
$S=k[X_1, X_2, X_3]/(X_1X_2-X_3^{2n})$ and
$$h: X_1\to -X_1, \quad X_2\to -X_2, \quad X_3\to -X_3.$$
Using the new variables,
$S=k[x,y,z]/(x^2+y^2+z^{2n})$ and
$$h: x\to -x, \quad y\to -y, \quad z\to -z.$$
In this case, we have the same relation $f$ as in the
description of the Kleinian singularities, but $h$ does not lift to a 
reflection of $k[x,y,z]$.
\item[($A_{n,5}$):]
Let $X_1:=t_1^n , X_2:=t_2^{n}$ and $X_3:=t_1t_2$ and
$S=k[X_1, X_2, X_3]/(X_1X_2-X_3^n)$ and
$$h: X_1\to X_2, \quad X_2\to X_1, \quad X_3\to X_3.$$
Using the new variables,
$S=k[x,y,z]/(x^2+y^2+z^n)$ and
$$h: x\to x, \quad y\to -y, \quad z\to z.$$
In this case, $h$ lifts to a reflection of $k[x,y,z]$, and we have the same
relation $f$ as in the description of the Kleinian singularities.
\end{enumerate}

This completes the analysis in Case $A_n$, and we summarize it in 
Table 2 below.

$$\begin{array}{|c|c|c|c|}
\hline

\text{ Case } & H & S= A^G &  h \\
\hline
  A_{n,1}& \langle d_1 \rangle & k[t_1,t_2]&  t_1 \to t_1, t_2 \to -t_2 \\
  \hline
 A_{n,2}& \langle d_1, c_{\epsilon_{2n}} \rangle& 
k[x,y,z]/(x^2 +y^2 +z^{2n})& x \to x, y \to y, z \to -z \\
 \hline
  & & &  X_1 \rightarrow -X_1, X_2\rightarrow X_2,\\
   A_{n,3}& \langle d_1, c_{\epsilon_{n}} \rangle, n \text{ odd} & 
k[X_1,X_2,X_3]/(X_1X_2-X_3^{n})& X_3\rightarrow -X_3  \\
  \hline
   A_{n,4}& \langle d_1, c_{\epsilon_{4n},-} \rangle &
k[x,y,z]/(x^2+ y^2+ z^{2n}) &   
x \rightarrow -x, y\rightarrow -y, z\rightarrow -z  \\
   \hline
    A_{n,5}&\langle s, c_{\epsilon_n}\rangle  & 
k[x,y,z]/(x^2+ y^2+ z^{n}) &  
x \rightarrow x, y\rightarrow -y, z\rightarrow z \\
\hline
\end{array}
$$
\begin{center}
Table 2: $H \cap SL_2(k)$ is of type $A_n$ (cyclic)
\end{center}

\medskip

For types $D$ and $E$, we describe
$h\in \Aut(S)$ without calculating $H$, since that is all the information
about $H$ that is needed for Theorem \ref{zzthm0.3}.
We continue to assume $H$ satisfies \eqref{E3.0.1} and
set $G=H\cap SL_2({k})$; let $g\in H\setminus G$ and $h\in \Aut(S)$ 
be the automorphism of
the fixed subring $S=k[t_1,t_2]^G$ that is induced by $g$. Note that $h$ is
an involution of $S$.

\medskip
\noindent
{\bf Case $D_{n}$:}
Suppose $G=BD_{4n}$ for $n \geq 2$.  Assume $h\in \Aut(S)$.
We first consider the case of an even integer $n\geq 4$, and we 
write $n=2m$ for
some $m \geq 2$.  Then since $\deg z<\min\{\deg x, \deg y\}$,
$h(z)=az$ for some $a\in k^\times$. Since the degree of $x$ is
$2(2m-1)$, which is not a multiple of $\deg z$ and $\deg y$, we
have $h(x)=bx$ for some $b\in k^\times$. Finally
$$h(y)=cy+ d z^{m-1}$$
for some $c\in k^\times $ and $d\in k$.
Assume that $h$ is an involution. Then $a^2=b^2=c^2=1$.  Since
$h$ maps the relation to a scalar multiple of
the relation, we have $a=1$ and $d=0$.
If $n$ is an odd integer $n \geq 3$, a similar argument shows that 
any involution $h$ of $S$ is of the form
$$h: x\to b x, \quad y\to c y,\quad z\to z,$$
where $b^2=c^2=1$. Therefore we have three subcases to consider:

\begin{enumerate}
\item[($D_{n,0}$):]
$c=b=-1$. In this case, the homological determinant of $h$ is 1, so this
case does not need to be considered.
\item[($D_{n,1}$):]
$c=-1$ and $b=1$. In this case, the homological determinant of
$h$ is $-1$. Further, $h$ lifts to a reflection of $k[x,y,z]$ and
preserves the relation $f$.
\item[($D_{n,2}$):]
$c=1$ and $b=-1$. In this case, the homological determinant of
$h$ is $-1$.  Further, $h$ lifts to a reflection of $k[x,y,z]$ and
preserves the relation $f$.
\end{enumerate}

It remains to consider the case when $G=BD_8$, the quaternion group of 
order 8. In this case since deg$(y)$ = deg$(z)$, we have $h(x) = bx, 
h(y) = cy + dz$ and  $h(z) = ey + az$.
If $h$ is an involution that maps $f$ to a scalar multiple of $f$ and 
has homological determinant $-1$, there are the following
additional cases to consider:

\begin{enumerate}
\item[($D_{2,3}$):] In this case
$$h: x \to x, \quad y \to y/2-3zi/2, \quad  z \to yi/2-z/2.$$
 Hence the homological
determinant of $h$ is $-1$, $h$ lifts to a reflection of $k[x,y,z]$, 
and $h$ preserves the relation $f$.

\item[($D_{2,4}$):] In this case
$$h: x \to x, \quad  y \to y/2+3zi/2, \quad  z \to-yi/2-z/2.$$
 Hence the homological
determinant of $h$ is $-1$, $h$ lifts to a reflection of $k[x,y,z]$, 
and $h$ preserves the relation $f$.
\end{enumerate}

In cases $D_{2,3}$ and $D_{2,4}$ we show that there is no corresponding 
group $G$ that satisfies \eqref{E3.0.1} since one can take 
$x= t_1^5t_2 - t_1t_2^5$, $y=\alpha (t_1^4+t_2^4)$ and 
$z= \beta t_1^2t_2^2$ for appropriate
constants $\alpha$ and $\beta$ (so that $x^2+y^2z+z^3 = 0$) 
and show that there is no automorphism $g$ of $k[t_1,t_2]$ that induces
the $h$ of cases $D_{2,3}$ or $D_{2,4}$ on $S=k[x,y,z]/(x^2+y^2z+z^3)$.  
One first shows that if such a $g$ exists
it must be of the form $\begin{pmatrix} a& 0\\0& b\end{pmatrix}$ or 
$\begin{pmatrix} 0& c\\d& 0\end{pmatrix},$
and then one shows that no such $g$ induces $h$.

\medskip
\noindent
{\bf Case $E_6$:} Similar to the case of $D_n$, every nontrivial graded
algebra automorphism $h$ of $S$ is of the form
$$h: x\to cx+dy^2, \quad y\to by, \quad z\to az$$
for some $a,b,c\in k^\times$. If $h$ is an involution, then
$a^2=b^2=c^2=1$ and $d=0$. The relation of $E_6$ implies that $b=1$. Therefore
we have possibly three cases to consider:

\begin{enumerate}
\item[($E_{6,0}$):]
$c=a=-1$. In this case, the homological determinant of $h$ is 1.
\item[($E_{6,1}$):]
$c=-1$ and $a=1$. In this case, the homological determinant of $h$ is
$-1$. Further, $h$ lifts to a reflection of $k[x,y,z]$ and preserves
the relation $f$.
\item[($E_{6,2}$):]
$c=1$ and $a=-1$. In this case, the homological determinant of $h$ is
$-1$. Further, $h$ lifts to a reflection of $k[x,y,z]$ and preserves
the relation $f$.
\end{enumerate}

\medskip
\noindent
{\bf Case $E_7$:} Similar to the case of $E_6$, every nontrivial graded algebra
automorphism $h$ of $S$ is of the form
$$h: x\to cx,\quad  y\to by, \quad z\to az$$
for some $a,b,c\in k^\times$. If $h$ is an involution, then
$a^2=b^2=c^2=1$. The relation of $E_7$ implies that
$a=b=1$ and that $c=-1$. The homological determinant of $h$ is
$-1$. Further, $h$ lifts to a reflection of $k[x,y,z]$ and
preserves the relation $f$.

\medskip
\noindent
{\bf Case $E_8$:} Let $h\in \Aut(S)$. The same argument as above
shows that
$$h: x\to cx,\quad  y\to by, \quad z\to az$$
for some $a,b,c\in k^\times$. If $h$ is an involution, then
$a^2=b^2=c^2=1$. The relation of $E_8$ implies that
$a=b=1$ and that $c=-1$. In this case, the homological determinant
of $h$ is $-1$. Further, $h$ lifts to a reflection of $k[x,y,z]$ and
preserves the relation $f$.

The above analysis gives the following result.

\begin{lemma}
\label{zzlem3.3} Suppose $H$ is a finite subgroup of $GL_2({k})$ 
that satisfies \eqref{E3.0.1}, and let $G=H\cap
SL_2({k})$. Write $S:=k[t_1,t_2]^G=k[X_1,X_2,X_3]/(f)$.
Let $g\in H\setminus G$ and let $h$ be the induced involution
of $g$ on $S$. Then $h$ lifts to a reflection of $k[X_1,X_2,X_3]$
and preserves $f$ in the following cases:
\begin{enumerate}
\item
$G$ is of type $(A_{n,1})$, $(A_{n,2})$, or $(A_{n,5})$.
\item
$G$ is of type $D_n$ for all $n\geq 4$, $E_6$, $E_7$, or $E_8$.
\end{enumerate}
\end{lemma}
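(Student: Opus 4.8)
The plan is to assemble the lemma directly from the case-by-case computations carried out in the paragraphs immediately preceding it: each assertion---that $h$ is the restriction of a reflection of $k[X_1,X_2,X_3]$, and that this reflection preserves $f$---is a statement about a single explicitly given involution, so the work reduces to inspecting the normal form of $h$ in each type. First I would record the structural constraint that makes every such normal form available: because $h$ is a graded involution of $S=k[X_1,X_2,X_3]/(f)$, the degrees $\deg X_i$ together with the requirement that $h$ carry $f$ to a scalar multiple of itself force the matrix of $h$ on $kX_1+kX_2+kX_3$ to be diagonal whenever the three generators have distinct degrees, and, $h$ being an involution, each diagonal entry is then $\pm1$. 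This reduces the verification to reading off eigenvalues and performing a one-line monomial check on $f$.

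Next, for each listed type I would read off the explicit $h$ and check the two claims simultaneously. In the cyclic cases this is exactly the content of Table~2: for $(A_{n,1})$ the map $t_2\mapsto -t_2$ is a reflection of $k[t_1,t_2]$; for $(A_{n,2})$ the map $z\mapsto -z$ has eigenvalues $1,1,-1$ and fixes $x^2+y^2+z^{2n}$ since $(-z)^{2n}=z^{2n}$; and for $(A_{n,5})$ the map $y\mapsto -y$ has eigenvalues $1,-1,1$ and fixes $x^2+y^2+z^{n}$. By contrast $(A_{n,3})$ gives $X_1\mapsto -X_1,\ X_3\mapsto -X_3$, which has two eigenvalues equal to $-1$ (a bireflection, not a reflection) and does not preserve the relation, while $(A_{n,4})$ has all three eigenvalues $-1$; this is precisely why those two types are absent from the statement. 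For $E_6$, $E_7$, $E_8$ and for $D_n$ with $n\geq4$ the same inspection applies: the involutions with homological determinant $-1$ were shown above to be diagonal with a single eigenvalue $-1$, hence reflections, and each preserves $f$ by inspection of its monomials.

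The one place that requires genuine care---and the step I expect to be the main obstacle---is the smallest $D$-type group, the quaternion group $BD_8$ (that is, $D_4$), where $\deg y=\deg z$ and $h$ is therefore \emph{not} forced to be diagonal. Here the normal-form argument must permit $h(y)=cy+dz$ and $h(z)=ey+az$, and one must solve the involution and relation-preservation equations to enumerate all candidates with $\hdet h=-1$. Besides the diagonal reflections, this produces the two non-diagonal candidates $D_{2,3}$ and $D_{2,4}$; these still lift to reflections preserving $f$, so they pose no threat to the lemma, but to keep the correspondence with \emph{actual} subgroups honest I would confirm, as done above by writing $x,y,z$ as explicit $G$-invariant polynomials in $t_1,t_2$ and checking no $g\in GL_2(k)$ induces $h$, that neither arises from an $H$ satisfying \eqref{E3.0.1}. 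Once this subcase is dispatched, collecting the verified cases produces exactly the two lists in the statement, completing the proof.
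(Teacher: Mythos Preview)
Your proposal is correct and takes essentially the same approach as the paper: both arguments simply collect the case-by-case computations done in the paragraphs preceding the lemma, using the constraint $\hdet h=-1$ (Lemma~\ref{zzlem3.1}) to exclude the cases $(D_{n,0})$ and $(E_{6,0})$. The paper's proof is terser---it just cites Lemma~\ref{zzlem3.1} and says ``the other cases were checked in the analysis above''---while you spell out the inspection in each type and flag the $BD_8$ subcase explicitly, but the substance is identical.
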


\begin{proof} By Lemma \ref{zzlem3.1}, the homological determinant
of $h$ on $S$ is $-1$. So $(D_{n,0})$ and  $(E_{n,0})$ cannot
happen. The other cases were checked in the analysis above.
\end{proof}

Let $O$ be the subgroup of $GL_2(k)$ consisting of elements of the
form $\begin{pmatrix} a& 0\\0& b\end{pmatrix}$; then $O\cong
k^\times \times k^\times$. Let $U$ be the subgroup of $GL_2(k)$
consisting of elements of the form $\begin{pmatrix} a& 0\\0&
b\end{pmatrix}$ or $\begin{pmatrix} 0& c\\d&
0\end{pmatrix}$. There is a surjective group homomorphism from $U$
to $\{ {\mathbb I}, s\}$ defined by sending $\begin{pmatrix} a& 0\\0&
b\end{pmatrix}$ to ${\mathbb I}$ and $\begin{pmatrix} 0& c\\d&
0\end{pmatrix}$ to $s$. The kernel of this map is $O$. Hence there
is a short exact sequence of groups
\begin{equation}
\label{E3.3.1}\tag{E3.3.1}
1\to O\to U\to \{ {\mathbb I}, s\}\to 1.
\end{equation}
It is well-known that $\Aut(k_q[t_1,t_2])=O$ if $q\neq \pm 1$, and
$\Aut(k_{-1}[t_1,t_2])=U$ by \cite[Lemma 1.12]{KKZ5}. The following
lemma follows from the proof of Lemma \ref{zzlem3.2}, so its proof
is omitted.

\begin{lemma}
\label{zzlem3.4} Let $H$ be a finite subgroup of $O$ such that
either $H$ is in $SL_2({k})$ or satisfies the short exact
sequence \eqref{E3.0.1}. Then, up to conjugation of $kt_1+kt_2$,
$H$ is equal to one of the following groups.
\begin{enumerate}
\item
The group $C_n$ generated by $c_{\epsilon}$, where $\epsilon$ is a
primitive $n$th root of unity. This group is denoted by $Q_1$.
\item
The group $C_{2n}\times C_2$ generated by $d_1$ and $c_{\epsilon}$,
where $\epsilon$ is a primitive $2n$th root of unity. This group is
denoted by $Q_2$.
\item
The group $C_{n}\times C_2\cong C_{2n}$ 
generated by $d_1$ and
$c_{\epsilon}$, where $\epsilon$ is a primitive $n$th root of unity
and $n$ is an odd integer. This group is denoted by $Q_3$.
\item
The group $C_{4n}$ generated by $c_{\epsilon, -}$, where $\epsilon$
is a primitive $4n$th root of unity. This group is denoted by $Q_4$.
\end{enumerate}
\end{lemma}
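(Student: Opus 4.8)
The assertion is the restriction of Lemma \ref{zzlem3.2} to diagonal subgroups, and the plan is to extract it from that proof by systematically using the fact that $H \subseteq O$ forces every element of $H$ to be diagonal. I would first record the elementary remark that underlies everything: the diagonal determinant-one matrices form the group $\{c_a : a \in k^\times\} \cong k^\times$, so any finite subgroup of it is cyclic and equals $\langle c_\epsilon\rangle$ for a primitive root of unity $\epsilon$, with no conjugation required.

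If $H \subseteq SL_2(k)$, then $H \subseteq O \cap SL_2(k) = \{c_a\}$, and the remark gives $H = \langle c_\epsilon\rangle = C_n$ at once; this is $Q_1$.

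Otherwise $H$ satisfies \eqref{E3.0.1}, and I would set $G = H \cap SL_2(k)$. Again $G \subseteq \{c_a\}$, so $G = \langle c_\epsilon\rangle$ is cyclic, putting us in the hypothesis of Lemma \ref{zzlem3.2}. Choosing any $g \in H \setminus G$, the exact sequence forces $\det g = -1$; since $g$ is diagonal this means $g = \begin{pmatrix} -a & 0 \\ 0 & a^{-1}\end{pmatrix} = c_{a,-}$ for some root of unity $a$, and $H = \langle c_\epsilon, g\rangle$. This is exactly the branch $b = c = 0$ of the matrix analysis in the proof of Lemma \ref{zzlem3.2}; crucially, the competing branch $a = d = 0$ that produced the dihedral case $(A_{n,5})$ cannot occur here, because its generator $s$ is off-diagonal and hence not in $O$. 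Running the same arithmetic comparing the order of $a$ with $n$ and $2n$, and tracking the parity of $n$, then sorts $H$ into exactly the possibilities $(A_{n,1})$--$(A_{n,4})$.

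Finally I would translate these into the stated groups: $(A_{n,2})$, $(A_{n,3})$, $(A_{n,4})$ are $Q_2$, $Q_3$, $Q_4$ verbatim, and the degenerate case $(A_{n,1})$, where $G$ is trivial and $H = \{\mathbb{I}, d_1\}$, is $Q_3$ with $n = 1$ (since then $c_\epsilon = \mathbb{I}$, the integer $n=1$ is odd, and $\langle d_1, \mathbb{I}\rangle = \{\mathbb{I}, d_1\}$). The only real work is the order-and-parity bookkeeping borrowed from Lemma \ref{zzlem3.2}, and the one point that needs a word of care is checking that $(A_{n,1})$ is absorbed into the $Q_3$ family rather than standing as a separate group; everything else is an immediate consequence of restricting attention to diagonal matrices.
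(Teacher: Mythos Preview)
Your proposal is correct and follows exactly the route the paper indicates: the paper omits the proof, stating only that it follows from the proof of Lemma~\ref{zzlem3.2}, and your argument is precisely that extraction, observing that the diagonal hypothesis $H\subseteq O$ rules out the off-diagonal branch $(A_{n,5})$ and leaves only $(A_{n,1})$--$(A_{n,4})$. Your remark that $(A_{n,1})$ is absorbed as the $n=1$ instance of $Q_3$ is a useful clarification that the paper does not make explicit.
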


Note that $Q_1$ occurred in case ($A_n$), $Q_2$ in case
($A_{n,2}$), $Q_3$ in case ($A_{n,3}$), and $Q_4$ in case
($A_{n,4}$).

\begin{lemma}
\label{zzlem3.5} Let $H$ be a finite subgroup of $U$ that is not
a subgroup of $O$. Suppose that either $H$ is in $SL_2({k})$ or
satisfies the short exact sequence \eqref{E3.0.1}. Then, up to
conjugation, $H$ is one of the following groups.
\begin{enumerate}
\item[(1)]
The binary dihedral group $BD_{4n}$, for $n\geq 1$, that is generated
by $s_1$ and $c_{\epsilon}$, where $\epsilon$ is a primitive $2n$th
root of unity. This group is denoted by $Q_5$. Note that 
$BD_4 = \langle s_1 \rangle$ is
cyclic.
\item[(2)]
The dihedral group $D_{2n}$, for $n\geq 1$, that is generated by $s$ and
$c_{\epsilon}$, where $\epsilon$ is a primitive $n$th root of unity.
This group is denoted by $Q_6$.
\item[(3)]
The group of order $8n$ generated by $d_1$, $s$ {\rm{(}}or $s_1$ {\rm{)}} 
and $c_{\epsilon}$, where $\epsilon$ is a primitive $2n$th root of unity.
This group is denoted by $Q_7$ {\rm{(}}it is isomorphic 
to $BD_{4n} \rtimes (d_1)$,
the semidirect product of the binary dihedral group of order $4n$ 
{\rm{(}}generated by $c_\epsilon$ and $s${\rm{)}} and the cyclic
group of order $2$ {\rm{(}}generated by $d_1${\rm{)}}{\rm{)}}.
\item[(4)]
The group of order $8n$ generated by $s$
and $c_{\epsilon, -}$, where $\epsilon$ is a primitive $4n$th
root of unity. This group is denoted by $Q_8$ {\rm{(}}it is isomorphic to
a semidirect product of two cyclic groups $C_{4n} \rtimes C_2$,
as $s c_{\epsilon, -} s = c_{\epsilon,-}^{2n-1}${\rm{)}}.
\item[(4')]
The group of order $8n$ generated by $s_1$
and $c_{\epsilon, -}$, where $\epsilon$ is a primitive $4n$th
root of unity. This group is conjugate to the group in case {\rm{(4)}}, 
so is also denoted by $Q_8$.
\end{enumerate}
\end{lemma}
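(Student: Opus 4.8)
The plan is to classify such $H$ through its diagonal part $D:=H\cap O$ together with the single nontrivial coset $D\tau$ determined by any anti-diagonal $\tau\in H$; this parallels the strategy of Lemmas \ref{zzlem3.2} and \ref{zzlem3.4}. First observe that the hypothesis forces every element of $H$ to have determinant $\pm1$: this is immediate if $H\subseteq SL_2(k)$, and if $H$ satisfies \eqref{E3.0.1} then the image of $\det$ is exactly $\{\pm1\}$. In particular every element of $D\subseteq O$ has determinant $\pm1$, so $D$ is a finite subgroup of $O$ that either lies in $SL_2(k)$ or satisfies \eqref{E3.0.1} for $\det|_D$. Lemma \ref{zzlem3.4} therefore applies, and up to conjugation $D$ is one of $Q_1,Q_2,Q_3,Q_4$. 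Since $D$ is diagonal and (apart from the trivial scalar cases) contains a regular element, the conjugation putting $D$ into standard form can be taken in the normalizer of $O$, which is $U$; hence it preserves both $H\subseteq U$ and the identity $D=H\cap O$.

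Next I would use normality. As $H\not\subseteq O$ we have $[H:D]=2$, so $D\trianglelefteq H$, and conjugation by an anti-diagonal $\tau=\bigl(\begin{smallmatrix}0&c\\ d&0\end{smallmatrix}\bigr)$ acts on $O$ by interchanging the diagonal entries, that is, $c_\epsilon\mapsto c_\epsilon^{-1}$ and $d_1\mapsto d_2$. Thus $D$ must be stable under this swap. A direct check shows $Q_1,Q_2,Q_4$ are swap-stable (using $-I\in Q_2,Q_4$ to get $d_2=(-I)d_1\in Q_2$, and $s\,c_{\epsilon,-}\,s=c_{\epsilon,-}^{2n-1}\in\langle c_{\epsilon,-}\rangle$), while $Q_3=C_n\times C_2$ with $n$ odd has $-I\notin Q_3$ and $d_2\notin Q_3$, so $Q_3$ is \emph{not} swap-stable. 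This eliminates $D=Q_3$ and accounts for its absence from the list.

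It then remains to pin down the coset. Since $\tau^2=cd\cdot I$, an anti-diagonal element satisfies $\tau^2=I$ when $\det\tau=-1$ and $\tau^2=-I$ when $\det\tau=1$, and conjugating by a suitable $\operatorname{diag}(u,u^{-1})$ (which fixes $D$ pointwise, as diagonal matrices commute) normalizes $\tau$ to $s$ or to $s_1$, respectively. If $D=Q_1=\langle c_\epsilon\rangle$ the whole coset $D\tau$ has constant determinant $\det\tau$: when $\det\tau=1$ one needs $-I\in D$ and obtains $H=\langle c_\epsilon,s_1\rangle=BD_{4n}=Q_5$, and when $\det\tau=-1$ one obtains $H=\langle c_\epsilon,s\rangle=D_{2n}=Q_6$. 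If $D=Q_2$ or $D=Q_4$, then $D$ already contains a determinant $-1$ element, so $D\tau$ contains an anti-diagonal involution, which I normalize to $s$; this yields $H=\langle c_\epsilon,d_1,s\rangle=Q_7$ (order $8n$, diagonal part $C_{2n}\times C_2$) and $H=\langle c_{\epsilon,-},s\rangle=Q_8$ (order $8n$, diagonal part $C_{4n}$), respectively. The two are genuinely distinct because $C_{2n}\times C_2\not\cong C_{4n}$.

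Finally I would verify the (routine) forward direction that each of $Q_5,\dots,Q_8$ is a finite subgroup of $U$ outside $O$ meeting the hypothesis, and note that case $(4')$ gives nothing new: the group $\langle s_1,c_{\epsilon,-}\rangle$ contains the determinant $-1$ anti-diagonal element $c_{\epsilon,-}s_1$, which a diagonal conjugation carries to $s$, so $(4')$ is conjugate to $(4)$. \emph{The main obstacle} is the conjugation bookkeeping: ensuring that the standardization of $D$ and the subsequent normalization of $\tau$ are both realizable inside $U$, so as not to destroy the monomial structure, and correctly separating the two order $8n$ families $Q_7$ and $Q_8$ by their diagonal parts rather than conflating them.
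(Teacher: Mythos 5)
Your proposal is correct and follows essentially the same route as the paper: both arguments reduce the diagonal part $H\cap O$ to the list of Lemma \ref{zzlem3.4}, use $\tau^2=-\det(\tau)\,{\mathbb I}$ to tie the determinant of an anti-diagonal representative to whether $-{\mathbb I}$ lies in $H\cap O$, and normalize that representative to $s$ or $s_1$ by conjugating with a determinant-one diagonal matrix (which fixes $H\cap O$ pointwise). The one genuine variation is how $Q_3$ is excluded: the paper does it inside its two determinant cases (via $s_1^2=-{\mathbb I}\notin Q_3$ in one case and $d_1\notin H\cap O$ in the other), whereas you use normality of $H\cap O$ in $H$ and the fact that conjugation by an anti-diagonal element swaps the diagonal entries, so the diagonal part must be swap-stable and $Q_3$ is not; this is a slightly more uniform bookkeeping device but not a different method.
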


Note that $Q_5$ appeared in case $(D_n)$, $Q_6$ in case $(A_{n,5})$.
We will see that groups $Q_7$ and $Q_8$ arise in the proof below.
Although they have the same order, the groups $Q_7$ and $Q_8$ are 
not isomorphic (e.g. when $n=1$, $Q_7$ is the dihedral group 
generated by $d_1s$ and $s$ (here $c_\epsilon = (d_1s)^2$), while 
$Q_8$ is the direct product of two cyclic groups $C_4 \times C_2$).

\begin{proof}[Proof of Lemma \ref{zzlem3.5}]
First note that the group generated by $s_1$ and $Q_4$
is conjugate to the group generated by $s$ and $Q_4$
using the element
$$\begin{pmatrix}
\epsilon & 0\\
0 & 1
\end{pmatrix}.$$
Hence the groups in case (4) and case (4') are isomorphic.

Pick $g\in H\setminus O$. By \eqref{E3.3.1}, $H$ is generated by
$H\cap O$ and $g$. By \eqref{E3.0.1}, $\det g=\pm 1$. There
are two cases to consider.

Case (i): Suppose there is a $g\in H\setminus O$ such that $\det g=1$.
Then, up to  conjugation, $g=s_1$. Since $s_1^2=-{\mathbb I}$,
$-{\mathbb I}\in H\cap O$. By Lemma \ref{zzlem3.4}, $H\cap O$ is
either $Q_1,Q_2$ or $Q_4$. (Note that $Q_3$ cannot occur as $-{\mathbb I}
\not\in Q_3$.) Therefore $H$ is of cases (1), (3) or (4'), respectively.

Case (ii): Suppose $\det g=-1$ for all $g\in H\setminus O$.
Then, up to  conjugation, $g=s$. We may also assume that $s_1
\not\in H$ (otherwise this is case (i)). Since $s_1=s d_1$,
$d_1\not\in H\cap O$. By Lemma \ref{zzlem3.4}, $H\cap O$ is either
$Q_1$ or $Q_4$. Therefore $H$ occurs in case (2) or (4) respectively.
Finally case (4) cannot occur as $\det (s c_{\epsilon,-})=1$.
\end{proof}

We summarize the groups we have found in the table below:
$$\begin{array}{|c|c|c|}
\hline
 H & \text{ generators}& \epsilon \text{ primitive root} \\
 \hline
 Q_1 & c_\epsilon &  nth \text{ root }\\
 Q_2 & d_1, \quad c_\epsilon &  2nth \text{ root }\\
 Q_3 & d_1, \quad c_\epsilon &   \text{ odd root }\\
 Q_4 & c_{\epsilon,-}  & 4nth \text{ root }\\
 Q_5 & s_1, \quad  c_\epsilon &  2nth \text{ root }\\
 Q_6 & s, \quad c_\epsilon & nth \text{ root }\\
 Q_7 & d_1, \quad s, \quad c_\epsilon  &  2nth \text{ root }\\
 Q_8 & s, \quad c_{\epsilon,-} &  4nth \text{ root }\\
\hline
\end{array}$$

\begin{center}
Table 3: Finite subgroups of $U$ that satisfy sequence \eqref{E3.0.1}
\end{center}

\section{Involutions on ADE singularities, noncommutative case}
\label{zzsec4}
In this section an analysis similar to that in the last section, 
but for the noncommutative cases that are needed later, is summarized 
briefly. Let $A$
be a noncommutative AS regular algebra of global dimension two that is
generated in degree 1. Let $H$ be a finite subgroup of $\Aut(A)$ which
satisfies the following exact sequence
\begin{equation}
\label{E4.0.1}\tag{E4.0.1} 1\to G\to
H\xrightarrow{\hdet} \{\pm 1\}\to 1
\end{equation}
where $G=H\cap \AutSL(A)$. The following lemma is similar to Lemma
\ref{zzlem3.2} (with part ($A_{n,5}$) not occurring) and Lemma
\ref{zzlem3.4}. Its proof is omitted.

\begin{lemma}
\label{zzlem4.1}
Let $A=k_{q}[t_1,t_2]$ and $H$ be a finite subgroup of $\Aut(A)$
satisfying \eqref{E4.0.1}. Suppose either
\begin{enumerate}
\item[(i)]
$q\neq \pm 1$ {\rm{(}}and whence $H\subset O${\rm{)}}, or
\item[(ii)]
$q=-1$ and $H\subset O$.
\end{enumerate}
Then, up to a permutation of $\{t_1,t_2\}$, $H$ is equal to one of
the following groups.
\begin{enumerate}
\item[($A^q_{n,1}$)]
$H=\langle {\mathbb I}, d_1\rangle=C_2$.
\item[($A^q_{n,2}$)]
$H$ is generated by $d_1$ and $c_{\epsilon}$, where $\epsilon$ is a
primitive $2n$th root of unity, and $H=C_{2n}\times C_2$.
\item[($A^q_{n,3}$)]
$H$ is generated by $d_1$ and $c_{\epsilon}$, where $\epsilon$ is a
primitive $n$th root of unity and $n$ is an odd integer. In this case,
$H=C_{n}\times C_2\cong C_{2n}$.
\item[($A^q_{n,4}$)]
$H$ is generated by $c_{\epsilon,-}$, where $\epsilon$ is a
primitive $4n$th root of unity, and $H=C_{4n}$.
\end{enumerate}
\end{lemma}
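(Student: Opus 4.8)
The plan is to reduce \eqref{E4.0.1} to the determinant sequence \eqref{E3.0.1} already analyzed in Section \ref{zzsec3}, the key point being that $\hdet$ and the ordinary determinant coincide on diagonal automorphisms. First I would note that in case (i) we have $\Aut(A)=O$, and in case (ii) the containment $H\subset O$ is assumed, so in both cases every $g\in H$ has the diagonal form $g\colon t_1\mapsto a t_1,\ t_2\mapsto b t_2$ with $a,b\in k^\times$. Applying Lemma \ref{zzlem1.6}(2) to $C=k_q[t_1,t_2]$ with the regular normal sequence $x_1=t_1$, $x_2=t_2$ --- which is valid for every $q\in k^\times$, the element $t_1$ being normal because $t_1t_2=q^{-1}t_2t_1$ --- yields $\hdet g = ab = \det g$. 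Thus on the subgroup $O$ the homomorphisms $\hdet$ and $\det$ agree, and \eqref{E4.0.1} is literally the sequence \eqref{E3.0.1}.

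With this identification in hand, $G=H\cap\AutSL(A)$ coincides with $H\cap SL_2(k)$, which, being a finite subgroup of the torus $\{c_a:a\in k^\times\}\cong k^\times$, is cyclic; this is exactly the standing hypothesis of Lemma \ref{zzlem3.2}. Furthermore, since $H\subset O$ contains no anti-diagonal matrix, the transposition $s$ can never lie in $H$, so case $(A_{n,5})$ of Lemma \ref{zzlem3.2} --- the only case that produces $s$ --- is automatically excluded. I would then run (or simply quote) the matrix analysis of Lemma \ref{zzlem3.2}: the argument there uses nothing about the ambient algebra beyond the facts that $G=\langle c_\epsilon\rangle$ is diagonal cyclic and that any $g\in H\setminus G$ has $\det g=-1$, both of which hold verbatim here. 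This forces $H$, up to a permutation of $\{t_1,t_2\}$, into one of the cases $(A_{n,1})$--$(A_{n,4})$, i.e. one of $(A^q_{n,1})$--$(A^q_{n,4})$. Equivalently, one may cite Lemma \ref{zzlem3.4} directly and discard its case $Q_1$ (namely $H\subset\AutSL(A)$), which is ruled out by the surjectivity of $\hdet$ in \eqref{E4.0.1}.

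The main obstacle --- really only a matter of care rather than difficulty --- is to verify that the matrix computations of Lemma \ref{zzlem3.2} are genuinely insensitive to the multiplication of $k_q[t_1,t_2]$: once $\hdet=\det$ on $O$ is established, the only data entering the classification are the eigenvalues of the diagonal matrices and the determinant condition, so the parameter $q$ drops out entirely. A secondary bookkeeping point is that every normalization used can be realized either by a diagonal conjugation (which fixes each diagonal matrix) or by the coordinate swap $s$; the latter merely interchanges $t_1$ and $t_2$, which is precisely the ``permutation of $\{t_1,t_2\}$'' allowed in the statement, so no conjugation outside the normalizer of $O$ is needed.
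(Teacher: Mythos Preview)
Your proposal is correct and takes essentially the same approach the paper indicates: the paper omits the proof, stating only that it is ``similar to Lemma \ref{zzlem3.2} (with part ($A_{n,5}$) not occurring) and Lemma \ref{zzlem3.4}.'' You have simply filled in the details of this reduction --- identifying $\hdet$ with $\det$ on $O$ via Lemma \ref{zzlem1.6}(2), observing that diagonality forces $G$ cyclic and kills the anti-diagonal case $(A_{n,5})$, and noting that the only conjugation needed is the swap $s$ --- which is exactly what the authors had in mind.
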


Next we compute the fixed subrings.  
Let $S=k_q[t_1,t_2]^G$ and let $g=d_1\in H\setminus G$ in the first
three cases, and $g=c_{\epsilon, -}\in H\setminus G$ in case ($A^q_{n,4}$).
Let $h$ be the induced automorphism of $g$ on $S$.

\begin{lemma}
\label{zzlem4.2} Retaining the hypotheses of Lemma \ref{zzlem4.1} and 
setting,
$$S=k_q[t_1,t_2]^G = k_{p_{ij}}[X_1,X_2,X_3]/(f).$$
In cases $(A^q_{n,1})$ and $(A^q_{n,2})$, $h$ lifts to a
reflection of $k_{p_{ij}}[X_1,X_2,X_3]$ and preserves the relation
$f$.
\end{lemma}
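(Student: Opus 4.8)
The plan is to run the commutative computation of Section~\ref{zzsec3} (specifically cases $(A_{n,1})$ and $(A_{n,2})$, recorded in Table~2) essentially verbatim, carrying along only the extra skew-commutation constants $p_{ij}$ that appear once $q\neq 1$. The argument splits into three routine steps: identify $G=H\cap\AutSL(A)$, present $S=k_q[t_1,t_2]^G$ as a skew hypersurface $k_{p_{ij}}[X_1,X_2,X_3]/(f)$, and read off the induced involution $h$ on the generators. To find $G$, note that a diagonal $g\colon t_1\to at_1,\ t_2\to bt_2$ has $\hdet g=ab$ by Lemma~\ref{zzlem1.6}(2) (as in Case~2.2 of the proof of Theorem~\ref{zzthm0.1}); thus $\hdet c_\epsilon=1$ while $\hdet d_1=-1$. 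Hence in case $(A^q_{n,1})$ we have $G=\{1\}$, so $S=k_q[t_1,t_2]$ itself and $h=d_1$ acts by $t_1\to -t_1,\ t_2\to t_2$; this is already a reflection of the two-variable algebra and carries no relation to preserve, so the conclusion is immediate (this is the degenerate instance, with $X_1=t_1$, $X_2=t_2$ and no genuine third generator). In case $(A^q_{n,2})$ one gets $G=\langle c_\epsilon\rangle\cong C_{2n}$, with $g=d_1\in H\setminus G$.

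For $(A^q_{n,2})$ I would next present $S=k_q[t_1,t_2]^{C_{2n}}$. The invariant monomials are the $t_1^at_2^b$ with $a\equiv b\pmod{2n}$, so $S$ is generated by $X_1=t_1^{2n}$, $X_2=t_2^{2n}$ and $X_3=t_1t_2$. Repeatedly applying $t_2t_1=qt_1t_2$ yields $(t_1t_2)^{2n}=q^{\binom{2n}{2}}t_1^{2n}t_2^{2n}$ together with the pairwise relations $X_3X_1=q^{2n}X_1X_3$, $X_3X_2=q^{-2n}X_2X_3$ and $X_2X_1=q^{4n^2}X_1X_2$. Exactly as in Case~2.2, this exhibits $C:=k_{p_{ij}}[X_1,X_2,X_3]$ (with $p_{ij}$ suitable powers of $q$) as a noetherian AS regular algebra of dimension three, and
\[
f:=X_1X_2-q^{-\binom{2n}{2}}X_3^{2n}
\]
as a regular normal element, so $S\cong C/(f)$. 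Applying $g=d_1$ and using $(-t_1)^{2n}=t_1^{2n}$ and $(-t_1)t_2=-t_1t_2$ gives $h\colon X_1\to X_1,\ X_2\to X_2,\ X_3\to -X_3$.

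It then remains to verify the two assertions. Any diagonal map is a graded automorphism of the skew polynomial ring $C$, so $h$ lifts to $\widetilde h=\operatorname{diag}(1,1,-1)$ on $C$; this $\widetilde h$ fixes the codimension-one subspace $kX_1+kX_2$, hence $\rank(\widetilde h-\id)=1$ and $\widetilde h$ is a reflection. Since $\widetilde h$ fixes $X_1,X_2$ and $\widetilde h(X_3^{2n})=(-X_3)^{2n}=X_3^{2n}$ because $2n$ is even, we get $\widetilde h(f)=f$, so $h$ preserves the relation and descends to $S$. I do not anticipate a real obstacle: the only genuine computation is the skew-polynomial presentation of $S$ and the normality and regularity of $f$, which is identical to Case~2.2. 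The conceptual point---and the reason cases $(A^q_{n,3})$ and $(A^q_{n,4})$ are excluded from this lemma---is precisely the even exponent $2n$ on $X_3$, which makes the sign $h(X_3)=-X_3$ invisible to $f$; by contrast, the odd exponent of $(A^q_{n,3})$ forces the induced $h$ to have $\rank(h-\id)=2$ and to send $f\mapsto -f$, while in $(A^q_{n,4})$ the induced $h$ acts as $-\id$ on the three generators, so again $\rank(h-\id)>1$.
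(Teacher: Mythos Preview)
Your proof is correct and follows essentially the same case-by-case approach as the paper: identify $G$, present $S$ as a skew hypersurface, and compute $h$ on the generators to verify it is a diagonal reflection fixing $f$. The only cosmetic difference is in $(A^q_{n,1})$: to match the stated form $S=k_{p_{ij}}[X_1,X_2,X_3]/(f)$, the paper presents $k_q[t_1,t_2]$ as the degenerate hypersurface with $f=X_3$ (so $h$ lifts as $X_1\to -X_1,\ X_2\to X_2,\ X_3\to X_3$), whereas you treat it as a two-variable algebra with no relation; either reading is fine.
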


\begin{proof}
We consider each of the cases:

\begin{enumerate}
\item[($A^q_{n,1}$):]
$S=k_q[t_1, t_2]$ and $h=g: t_1\to -t_1, t_2\to t_2$ is a reflection
of $S$, and it lifts to $h: X_1 \to -X_1, X_2 \to X_2, X_3 \to X_3$ 
and preserves $f=X_3$.
\item[($A^q_{n,2}$):]
Let $X_1:=t_1^{2n} , X_2:=t_2^{2n}$ and $X_3:=t_1t_2$. Then
$$S=k_{p_{ij}}[X_1, X_2, X_3]/(X_1X_2-q^{-{2n \choose 2}}X_3^{2n})$$ where
$p_{12}=q^{4n^2}$, $p_{13}=q^{2n}$ and $p_{23}=q^{-2n}$. It is easy to
see that
$$h: X_1\to X_1, \quad X_2\to X_2, \quad X_3\to -X_3.$$
Note that $h$ lifts to a reflection of $k_{p_{ij}}[X_1,X_2,X_3]$ and
preserves\\ $f=X_1X_2-q^{-{2n \choose 2}}X_3^{2n}$.
\item[($A^q_{n,3}$):]
Let $X_1:=t_1^n , X_2:=t_2^{n}$ and $X_3:=t_1t_2$
then $$S=k_{p_{ij}}[X_1, X_2, X_3]/(X_1X_2-q^{-{n \choose 2}}X_3^{n}),$$
where $n$ is an odd integer, and
where $p_{12}=q^{n^2}$, $p_{13}=q^{n}$ and $p_{23}=q^{-n}$. Then
$$h: X_1\to -X_1, \quad X_2\to X_2, \quad X_3\to -X_3.$$
In this case, $h$ does not lift to a reflection of $k_{p_{ij}}[X_1,X_2,X_3]$
and does not preserve $f=X_1X_2-q^{-{n \choose 2}}X_3^{n}$.
\item[($A^q_{n,4}$):]
Let $X_1:=t_1^{2n} , X_2:=t_2^{2n}$ and $X_3:=t_1t_2$ then
$$S=k[X_1, X_2, X_3]/(X_1X_2-q^{-{2n \choose 2}}X_3^{2n}),$$ where
$p_{12}=q^{4n^2}$, $p_{13}=q^{2n}$ and $p_{23}=q^{-2n}$, and
$$h: X_1\to -X_1, \quad X_2\to -X_2, \quad X_3\to -X_3.$$
In this case, $h$ preserves $f=X_1X_2-q^{-{2n \choose 2}}X_3^{2n}$,
but does not lift to a reflection of $k_{p_{ij}}[X_1,X_2,X_3]$.
\end{enumerate}
\end{proof}


\section{Definitions of Complete intersection and bireflections}
\label{zzsec5}
In this section we recall the definitions of complete intersection,
bireflection and cyclotomic Gorenstein in the noncommutative setting, 
which were proposed in \cite{KKZ4} . A set of (homogeneous) elements
$\{\Omega_1,\cdots,\Omega_n\}$ in a graded algebra $C$ is called
{\it a sequence of regular normalizing elements} if, for each $i$, the
image of $\Omega_i$ in $C/(\Omega_1,\cdots,\Omega_{i-1})$ is regular
(i.e., a non-zero-divisor) and normal.

\begin{definition}
\label{zzdef5.1}\cite{KKZ4}
Let $A$ be a connected graded noetherian algebra.
\begin{enumerate}
\item[(cci)]
We say $A$ is a {\it classical complete intersection ring} (or {\it cci}
for short) if there is a connected graded noetherian AS regular
algebra $C$ and a sequence of regular normal elements $\{\Omega_1,
\cdots,\Omega_n\}$ such that $A$ is isomorphic to $C/(\Omega_1,
\cdots,\Omega_n)$.
\item[(gci)]
We say $A$ is a {\it complete intersection ring of type GK} (or {\it
gci} for short) if the $\Ext$-algebra $\Ext^*_{A}(k,k)$ has finite
GK-dimension in the sense of Definition \ref{zzdef1.1}.
\end{enumerate}
\end{definition}

In the commutative case, these two definitions of complete intersection
are equivalent, and are equivalent to the following condition, see
\cite[Theorem C]{FHT} and \cite{Gu},
\begin{enumerate}
\item[(nci)]
the $\Ext$-algebra $\Ext^*_{A}(k,k)$ is noetherian.
\end{enumerate}

In the noncommutative case the condition (nci) is not equivalent to
either (cci) or (gci), but (cci) implies (gci). We refer to 
\cite{KKZ4} for more details and examples.

By \eqref{E1.1.1}, the GK-dimension of the $\Ext$-algebra
$\Ext^*_A(k,k)$ is given by
$$\GKdim \Ext^*_A(k,k)=\limsup_{n\to\infty}
\frac{\log (\sum_{i=0}^n \dim \Ext_{A}^i(k,k))}{\log n}.$$
We can define a numerical invariant of the algebra $A$, called the
{\it gci number} of $A$, by
$$gci(A)=\GKdim \Ext^*_A(k,k).$$
Since $A$ is noetherian, by using the minimal free resolution of the
trivial $A$-module $k$, one can check that $\Tor^A_i(k,k)^*\cong
\Ext^i_A(k,k)$ \cite[Proposition 5.1, p. 120]{CE}. Thus we have
\begin{equation}
\label{E5.1.1}\tag{E5.1.1}
gci(A)=\limsup_{n\to\infty}
\frac{\log (\sum_{i=0}^n \dim \Tor^{A}_i(k,k))}{\log n}.
\end{equation}

A related notion is the following.

\begin{definition} \cite{KKZ4}
\label{zzdef5.2}
Let $A$ be a connected graded noetherian algebra.
\begin{enumerate}
\item
We say $A$ is {\it cyclotomic} if its Hilbert series $H_A(t)$ is
of the form $p(t)/q(t)$, where $p(t)$ and $q(t)$ are coprime
integral polynomials, and if all the roots of $p(t)$ are roots
of unity. Note that all roots of $q(t)$ are roots of
unity since $A$ is noetherian.
\item
We say $A$ is {\it cyclotomic Gorenstein} if $A$ is both
cyclotomic and AS Gorenstein.
\end{enumerate}
\end{definition}

We review some results proved in \cite{KKZ4}.

\begin{lemma}
\label{zzlem5.3}
\cite{KKZ4}
Let $A$ be a connected graded noetherian ring.
\begin{enumerate}
\item
If $A$ is a cci, then it is a gci.
\item
Suppose that $A$ is isomorphic to $R^G$ for some noetherian
Auslander regular algebra $R$ and a finite group
$G\subset \Aut(R)$. If $A$ is a gci, then it is cyclotomic Gorenstein.
\end{enumerate}
\end{lemma}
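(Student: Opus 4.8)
The plan is to treat both parts by controlling the bigraded Poincar\'e series $E_A(s,t):=\sum_{i,j}\dim_k\Ext^i_A(k,k)_j\,s^i t^j$, whose specialization at $s=1$ computes $gci(A)$ via \eqref{E5.1.1} and whose specialization at $s=-1$ is tied to the Hilbert series of $A$.

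For (1), I would start from a presentation $A\cong C/(\Omega_1,\dots,\Omega_n)$ with $C$ AS regular and $\{\Omega_i\}$ a regular normalizing sequence, and build a minimal free resolution of $k$ over $A$ from one over $C$ by successively adjoining, at each regular normal element $\Omega_i$ of degree $d_i$, a single polynomial operator of homological degree $2$. This is the noncommutative analogue of the Tate/Shamash--Golod construction, and at the level of Poincar\'e series it yields $E_{C/(\Omega_1,\dots,\Omega_i)}(s,t)=E_{C/(\Omega_1,\dots,\Omega_{i-1})}(s,t)/(1-s^2t^{d_i})$. Since $C$ has finite global dimension, $E_C(s,t)$ is a polynomial, so after $n$ steps $E_A(s,t)=E_C(s,t)/\prod_{i=1}^n(1-s^2t^{d_i})$. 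Setting $t=1$ shows $E_A(s,1)$ has a pole of order at most $n$ at $s=1$, i.e.\ the Betti numbers $\dim_k\Ext^i_A(k,k)$ grow polynomially in $i$; hence $\GKdim\Ext^*_A(k,k)\le n<\infty$ and $A$ is a gci.

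For (2) I would separate the conclusion into ``cyclotomic'' and ``AS Gorenstein''. The Cohen--Macaulay input is standard: since $R$ is noetherian Auslander regular, $\ch k=0$, and $G$ is finite, the fixed ring $A=R^G$ is GK--Cohen--Macaulay, Auslander--Gorenstein, and of finite injective dimension. For the cyclotomic property I would use the Euler--characteristic identity coming from the minimal free resolution of $k$,
$$H_A(t)\cdot E_A(-1,t)=1,$$
together with the structural description of Ext-algebras of gci's from \cite{KKZ4}: the gci hypothesis forces $E_A(s,t)$ to be rational with denominator a product of factors $(1-s^2t^{d_m})$ and numerator a product of factors $(1+st^{e_l})$, mirroring the classical complete-intersection resolution. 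Specializing at $s=-1$ then gives $H_A(t)=\prod_m(1-t^{d_m})/\prod_l(1-t^{e_l})$, a ratio of products of cyclotomic polynomials; after cancellation the numerator and denominator of $H_A$ have only roots of unity as roots, so $A$ is cyclotomic.

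Finally, to upgrade to AS Gorenstein I would exploit that this product form is palindromic: from $(1-t^{-a})=-t^{-a}(1-t^{a})$ one obtains a functional equation $H_A(1/t)=\pm\,t^{c}H_A(t)$ for an explicit $c$. Combining this Hilbert-series symmetry with the GK--Cohen--Macaulay property and finite injective dimension established above, the graded Gorenstein criterion for Cohen--Macaulay algebras (via the balanced dualizing complex) yields that $A$ is AS Gorenstein, so $A$ is cyclotomic Gorenstein. The main obstacle is the middle step of (2): extracting from the bare finiteness $\GKdim\Ext^*_A(k,k)<\infty$ the precise rational shape of $E_A(s,t)$, equivalently that the numerator of $H_A$ is a product of cyclotomics. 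Polynomial growth of the Betti numbers alone does not pin down the generating function, so this is exactly where the hypothesis $A\cong R^G$ and the detailed analysis of the Ext-algebra in \cite{KKZ4} are indispensable.
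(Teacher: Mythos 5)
First, note that the paper offers no proof of this lemma at all: it is quoted from \cite{KKZ4}, so whatever you write is necessarily being compared with the source rather than with an argument in this paper. Your sketch of part (1) is essentially the right idea (a Shamash--Golod type construction over each regular normal $\Omega_i$), though you only need the coefficientwise inequality $E_{C/(\Omega_1,\dots,\Omega_i)}(s,t)\preceq E_{C/(\Omega_1,\dots,\Omega_{i-1})}(s,t)/(1-s^2t^{d_i})$ rather than the asserted equality (equality can fail, e.g.\ when $\Omega_i$ does not lie in the square of the augmentation ideal), and one must check that the construction goes through for normal, not just central, elements; the inequality already gives polynomial growth of $\sum_{i\le n}\dim\Tor^A_i(k,k)$ and hence $\GKdim\Ext^*_A(k,k)\le n<\infty$ by \eqref{E5.1.1}.

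Part (2), however, has a genuine gap at its central step, which you acknowledge but cannot close. There is no ``structural description of Ext-algebras of gci's'' forcing $E_A(s,t)=\prod_l(1+st^{e_l})/\prod_m(1-s^2t^{d_m})$: that would say every gci has the Betti table of a cci, whereas the whole point of \cite{KKZ4} is that gci is strictly weaker than cci in the noncommutative setting, and finiteness of $\GKdim\Ext^*_A(k,k)$ constrains only the growth of the total Betti numbers, not the shape of the generating function. The argument that actually works is softer: write $H_A(t)=p(t)/q(t)$ in lowest terms. Since $\Tor^A_i(k,k)$ is concentrated in internal degrees $\ge i$, the $n$-th coefficient of $H_A(t)^{-1}=\sum_n\bigl(\sum_i(-1)^i\dim\Tor^A_i(k,k)_n\bigr)t^n$ is bounded in absolute value by $\sum_{i\le n}\dim\Tor^A_i(k,k)$, which grows polynomially by the gci hypothesis; hence $q(t)/p(t)$ has no pole in the open unit disc, so $p$ has no root of modulus less than $1$, and since $p(0)=1$ and $p$ has integer coefficients, all roots of $p$ lie on the unit circle and are roots of unity by Kronecker's theorem. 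This gives the cyclotomic property. The Gorenstein property then follows roughly as you say at the end, but not via your intermediate claim that $R^G$ is ``Auslander--Gorenstein of finite injective dimension'' --- that is false in general (a commutative invariant ring that is Cohen--Macaulay but not Gorenstein has infinite injective dimension as a ring) and is essentially the conclusion being sought. What is true, and what is needed, is that $R^G$ is AS Cohen--Macaulay (a direct-summand argument using the Reynolds operator); a ratio of products of cyclotomic polynomials automatically satisfies $H_A(1/t)=\pm t^{c}H_A(t)$, and Stanley's criterion \cite[Theorem 6.2]{JoZ} then upgrades Cohen--Macaulay plus this functional equation to AS Gorenstein.
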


The notion of cyclotomic Gorenstein is strictly weaker than the notion
of complete intersection, even in the commutative case (see
Stanley \cite[Example 3.9]{St1}). However, Stanley's example
is not a fixed subring of an AS regular algebra.

The next two easy lemmas will be used later. Let $A= C[t;\sigma]$
denote the skew polynomial Ore extension of $C$ by an automorphism $\sigma$
of $C$ \cite[Section 1.2.3]{MR}.

\begin{lemma}
\label{zzlem5.4} Let $C$ be a noetherian connected graded algebra,
and $G$ be a finite subgroup of $\Aut(C)$ such that $C^G$ is a
cci {\rm{(}}respectively, gci{\rm{)}}.
\begin{enumerate}
\item
Let $\sigma\in \Aut(C)$ be an automorphism that commutes with all 
$g\in G$. By abuse of
notation, let $G$ also denote the finite subgroup of $\Aut(C[t;\sigma])$
induced from $G$, with $g(t)=t$ for all $g\in G$ and $g\mid_C=g$. Then
$(C[t;\sigma])^G$ is a cci {\rm{(}}respectively, gci{\rm{)}}.
\item
Let $\{\Omega_1, \cdots, \Omega_n\}$ be a sequence of regular normalizing
elements in $C$ such that $g(\Omega_i)=\Omega_i$ for all $g\in G$ and
all $i$. Let $B=C/(\Omega_1,\cdots,\Omega_n)$ and let $G$ also denote
the subgroup of $\Aut(B)$ induced from $G$. Then $B^G\cong C^G/
(\Omega_1,\cdots,\Omega_n)$.
\item
Under the hypotheses of part {\rm{(2)}} $B^G$ is a cci 
{\rm{(}}respectively, gci{\rm{)}}.
\end{enumerate}
\end{lemma}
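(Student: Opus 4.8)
The plan is to treat part (2) as the structural core, to deduce part (3) from it, and to handle part (1) separately through the Ore-extension structure. For part (1) I first identify the fixed ring explicitly. Since $\sigma$ commutes with every $g\in G$ and $g(t)=t$, an element $\sum_i c_i t^i\in C[t;\sigma]$ is $G$-invariant exactly when every $c_i\in C^G$; moreover $\sigma$ carries $C^G$ into itself, because for $c\in C^G$ and $g\in G$ we have $g\sigma(c)=\sigma g(c)=\sigma(c)$. Hence $\tau:=\sigma|_{C^G}$ is an automorphism of $C^G$ and $(C[t;\sigma])^G=C^G[t;\tau]$, so part (1) reduces to the assertion that an Ore extension $R[t;\tau]$ of a cci (resp.\ gci) algebra $R$ is again cci (resp.\ gci).

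For the gci half of this reduction, the element $t$ is regular and normal in $R[t;\tau]$ with $R[t;\tau]/(t)\cong R$, and a change-of-rings argument (the minimal $R[t;\tau]$-resolution of $k$ is the total complex of the minimal $R$-resolution of $k$ with the length-one complex $R[t;\tau](-d)\xrightarrow{\,t\,}R[t;\tau]$, where $d=\deg t$) gives $\dim_k\Tor^{R[t;\tau]}_n(k,k)=\dim_k\Tor^R_n(k,k)+\dim_k\Tor^R_{n-1}(k,k)$. By \eqref{E5.1.1} the two gci-numbers agree, so $R[t;\tau]$ is gci. For the cci half, write $R=D/(\Psi_1,\dots,\Psi_m)$ with $D$ AS regular and $\{\Psi_j\}$ a regular normalizing sequence; if $\tau$ lifts to an automorphism $\hat\tau$ of $D$ fixing each $\Psi_j$, then $D[t;\hat\tau]$ is AS regular and $R[t;\tau]=D[t;\hat\tau]/(\Psi_1,\dots,\Psi_m)$ presents $R[t;\tau]$ as a cci. \textbf{This lifting of $\tau$ to the AS regular cover is the main obstacle}: in general a graded automorphism of a quotient need not extend to the cover. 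I would resolve it either directly (exploiting that $D$ and $R$ coincide in the degrees carrying the algebra generators, so $\tau|_{R_1}$ transports to $D_1$ and one checks it preserves the relations and the $\Psi_j$) or by recognizing $R[t;\tau]$ as a Zhang twist of the central extension $R[t]$ and invoking twist-invariance of the cci property.

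For part (2), since $\ch k=0$ the Reynolds operator $\pi=\frac{1}{|G|}\sum_{g\in G}g$ makes the invariants functor $(-)^G$ exact on graded $kG$-modules. Applying it to the short exact sequence $0\to I\to C\to B\to 0$ with $I=(\Omega_1,\dots,\Omega_n)$ yields $B^G\cong C^G/I^G$. Because each $\Omega_i$ is $G$-fixed, $\pi$ is right $C^G$-linear along the $\Omega_i$, that is $\pi(c\Omega_i)=\pi(c)\Omega_i$, so $I^G=\pi(I)=\sum_i C^G\Omega_i=(\Omega_1,\dots,\Omega_n)C^G$. This gives the desired isomorphism $B^G\cong C^G/(\Omega_1,\dots,\Omega_n)$.

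For part (3), the first step is to promote $\{\Omega_i\}$ to a regular normalizing sequence inside $C^G$. Applying part (2) to the truncated sequences gives $C^G/(\Omega_1,\dots,\Omega_{i-1})C^G\cong (C/(\Omega_1,\dots,\Omega_{i-1}))^G$, which embeds in $C/(\Omega_1,\dots,\Omega_{i-1})$; regularity of $\Omega_i$ there descends to the subring, and normality descends by uniqueness of the normalizing factor together with $G$-equivariance. Granting this, if $C^G=D/(\Psi_1,\dots,\Psi_m)$ is cci I would lift each $\Omega_i$ to some $\tilde\Omega_i\in D$ and verify that $\Psi_1,\dots,\Psi_m,\tilde\Omega_1,\dots,\tilde\Omega_n$ is a regular normalizing sequence in $D$ — the first block is given and the second is exactly the statement just proved in $C^G=D/(\Psi_1,\dots,\Psi_m)$ — so $B^G\cong D/(\Psi_1,\dots,\Psi_m,\tilde\Omega_1,\dots,\tilde\Omega_n)$ is cci, requiring no automorphism lifting at this stage. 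If instead $C^G$ is only gci, I would quotient by the $\Omega_i$ one at a time, using that passing to $R/(\Omega)$ for a regular normal $\Omega$ raises $\GKdim\Ext^*_R(k,k)$ by at most one via the same change-of-rings mechanism as in part (1) (cf.\ \cite{KKZ4}); after $n$ steps the $\Ext$-algebra still has finite GK-dimension, so $B^G$ is gci.
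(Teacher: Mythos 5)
Your proposal is correct, and for parts (2) and (3) it follows essentially the same route as the paper: the Reynolds operator applied to $0\to(\Omega)\to C\to C/(\Omega)\to 0$ yields $B^G\cong C^G/(\Omega_1,\cdots,\Omega_n)$ together with regularity and normality of the $\Omega_i$ in $C^G$ (the paper does this one $\Omega$ at a time by induction rather than computing $I^G$ directly, but the content is identical), and part (3) then follows by appending lifts of the $\Omega_i$ to the regular normalizing sequence presenting $C^G$ in the cci case, and by the fact that passing to $R/(\Omega)$ preserves finiteness of $\GKdim \Ext^*_R(k,k)$ in the gci case, exactly as you say.

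The one point of divergence is part (1). The paper's entire argument there is the identity $(C[t;\sigma])^G=C^G[t;\sigma]$ ``and the assertion follows,'' which silently assumes that an Ore extension $R[t;\tau]$ of a cci (respectively gci) algebra is again a cci (respectively gci). Your gci argument via $\dim_k\Tor^{R[t;\tau]}_n(k,k)=\dim_k\Tor^R_n(k,k)+\dim_k\Tor^R_{n-1}(k,k)$ supplies the half the paper leaves implicit and is complete. For the cci half you are right to flag the lifting of $\tau$ to the AS regular cover $D$ as a genuine issue rather than a formality: a graded automorphism of $D/(\Psi_1,\cdots,\Psi_m)$ need not extend to $D$ in general, and the paper does not address this. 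In every place the paper actually invokes the cci version of part (1) (notably Example \ref{zzex5.9}(iii), where the Ore variable is central and $\tau$ is the identity) the lift is explicit, so nothing downstream is endangered; but your observation identifies a step that the published one-line proof suppresses, and your proposed repairs (transporting $\tau$ along the generators of a minimal cover, or a twist argument) are plausible sketches rather than complete arguments. Net: same skeleton as the paper, with a more careful and more honest treatment of part (1).
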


\begin{proof} (a) Since $(C[t;\sigma])^G=C^G[t;\sigma]$, the assertion
follows.

(2) First we claim that, if $\Omega$ is a regular normal
element of an algebra $D$ and $g(\Omega)=\Omega$ for all
$g\in G$, then $\Omega$ is a regular normal element in $D^G$
and $D^G/(\Omega)=(D/\Omega)^G$. To see
this, we use the short exact sequence
$$0\to D[\deg \Omega]\xrightarrow{r_{\Omega}} D\to D/(\Omega)\to 0.$$
Applying the Reynolds operator $\int:=\frac{1}{|G|}
\sum_{g\in G} g$ to the sequence above, we have
$$0\to D^G[\deg \Omega]\xrightarrow{r_{\Omega}} D^G
\to (D/(\Omega))^G\to 0.$$
Therefore $\Omega\in D^G$ is a regular normal element, and
$D^G/(\Omega)=(D/\Omega)^G$. The assertion follows by induction.

(3) By induction on $n$ we need to show the assertion only for $n=1$.
Write $\Omega=\Omega_1$. Clearly $\Omega$ is a regular normal element
in $C^G$. If $C^G$ is a cci (respectively, gci), so is $C^G/(\Omega)$.
By part (2), $B^{G}\cong C^G/(\Omega)$, and the assertion follows.
\end{proof}

\begin{lemma}
\label{zzlem5.5} Let $B$ be a cci of the
form $C/(\Omega_1,\cdots,\Omega_n)$, where $C$ is an AS regular algebra
and $\{\Omega_1,\cdots,\Omega_n\}$ is a sequence of regular normalizing
elements of $C$.  Suppose that $\sigma \in \Aut(B)$ is a graded 
automorphism of $B$ that lifts 
to a graded automorphism $\sigma'\in \Aut(C)$.
Let $A=B[v;\sigma]$, and $\deg v=d$. Suppose that $h \in \Aut(A)$ is 
such that $h(v) = -v$ and $g := h\mid_B$ is a graded automorphism of $B$,
where $g$ lifts to a graded automorphism $g' \in \Aut(C)$.
 Assume that:
\begin{enumerate}
\item there exists a set
$\{x_1,\cdots,x_m\}$ of degree 1 elements that generate the algebra $C$,
with $x_m$ a regular normal element in $C$;
\item
$g'\in \Aut(C)$ is an involutory reflection of $C$ such that the fixed
subring $C^{g'}$ is AS regular, $g'(x_j)=x_j$ for $j<m$, and
$g'(x_m)=-x_m$;
\item
$\sigma'(x_m)=\lambda x_m$ for some $\lambda\in k^\times$,
\item
$g'(\Omega_i)=\Omega_i$ for all $i=1,\cdots, n$,
\item $\sigma'$ and $g'$ commute.
\end{enumerate}
Then the fixed subring $A^h$ is a cci.
\end{lemma}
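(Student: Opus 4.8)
The plan is to lift the entire configuration to an AS regular algebra, compute the fixed ring there, and then descend through $\{\Omega_i\}$ using Lemma~\ref{zzlem5.4}. First I would form the Ore extension $\tilde C := C[v;\sigma']$, which is AS regular because $\sigma'$ is a graded automorphism of the AS regular algebra $C$. Since $\sigma'$ lifts $\sigma$, it preserves the ideal $I := (\Omega_1,\dots,\Omega_n)$, so $\tilde C/I \cong B[v;\sigma] = A$ and, as one checks, $\{\Omega_1,\dots,\Omega_n\}$ remains a regular normalizing sequence of $\tilde C$. Next I would lift $h$ by setting $h'|_C = g'$ and $h'(v) = -v$: hypothesis~(5), that $\sigma'$ and $g'$ commute, is exactly what makes $h'$ compatible with the Ore relation $vc = \sigma'(c)v$, and hypothesis~(4) gives $h'(\Omega_i) = g'(\Omega_i) = \Omega_i$. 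Thus $h'$ descends to $h$ on $A$, fixes each $\Omega_i$, and is an involution because $g'$ is.

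With the lift in hand, Lemma~\ref{zzlem5.4}(2), applied to $\tilde C$, the group $\langle h'\rangle$, and the sequence $\{\Omega_i\}$, yields $A^h = (\tilde C/I)^{h'} \cong \tilde C^{h'}/(\Omega_1,\dots,\Omega_n)$, with $\{\Omega_i\}$ still a regular normalizing sequence in $\tilde C^{h'}$. By Lemma~\ref{zzlem5.4}(3) it then suffices to prove that $\tilde C^{h'}$ is itself a cci; this is the heart of the argument, and the point where hypotheses~(1)--(3) enter.

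To identify $\tilde C^{h'}$ I would decompose by the eigenvalues of $h'$. Writing an element of $\tilde C$ as $\sum_i c_i v^i$ with $c_i \in C$, invariance under $h'$ forces $g'(c_i) = (-1)^i c_i$, so $\tilde C^{h'} = \bigoplus_{i \text{ even}} C^{g'} v^i \oplus \bigoplus_{i \text{ odd}} (x_m C^{g'}) v^i$, using that the $(-1)$-eigenspace of the reflection $g'$ is $x_m C^{g'}$ (here the normality of $x_m$ from~(1) is used). Put $u := v^2$, $p := x_m^2$ and $w := x_m v$, all $h'$-invariant. The even part is $E := C^{g'}[u;(\sigma')^2]$, an Ore extension of the AS regular algebra $C^{g'}$ of hypothesis~(2), hence AS regular; the odd part equals $wE = Ew$, so $w$ is normal in $\tilde C^{h'}$. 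Hypothesis~(3) produces the single relation $w^2 = x_m\,\sigma'(x_m)\,v^2 = \lambda p u \in E$. Hence $\tilde C^{h'} \cong \hat E/(w^2 - \lambda p u)$, where $\hat E := E[w;\theta]$ is the Ore extension of $E$ by the automorphism $\theta$ implementing conjugation by $w$; this exhibits $\tilde C^{h'}$ as a hypersurface, in particular a cci, and the proof concludes.

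The main obstacle is the bookkeeping in the previous paragraph: checking that $\theta$ is a well-defined graded automorphism of $E$ (equivalently $wE = Ew$), that $\hat E = E[w;\theta]$ is AS regular, and that $\Theta := w^2 - \lambda p u$ is a regular normal element of $\hat E$ with $\hat E/(\Theta) \cong \tilde C^{h'}$; each reduces to the normality of $x_m$ and the reflection structure of $g'$, together with the degree match $\deg w^2 = 2 + 2d = \deg(pu)$. I would also note that $h'$ is a \emph{bireflection} rather than a reflection of $\tilde C$: by Lemma~\ref{zzlem1.6}(1) its trace is $Tr_C(g',t)/(1 + t^d)$, whose pole at $t = 1$ has order $\GKdim \tilde C - 2$. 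This is why $\tilde C^{h'}$ is genuinely a hypersurface and not AS regular, and correspondingly why the conclusion is that $A^h$ is a cci rather than AS regular.
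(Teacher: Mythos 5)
Your argument is correct, and it reaches the same presentation of $A^h$ as the paper does (your $u=v^2$, $p=x_m^2$, $w=x_m v$ and the relation $w^2-\lambda pu$ are exactly the paper's $V_2$, $x_m'$, $Y_m$ and $\Theta:=Y_m^2-\lambda x_m'V_2$ inside the iterated Ore extension $W$ over $C^{g'}$), but the route to the isomorphism is genuinely different. The paper first writes down the candidate algebra $A':=W/(\Omega_1,\dots,\Omega_n,\Theta)$, produces a surjection $A'\to A^h$, and then proves injectivity by a Hilbert series count: it computes $Tr_A(h,t)$ from the short exact sequence $0\to C[\deg x_m]\to C\to C/(x_m)\to 0$, applies Molien's theorem to get $H_{A^h}(t)$, and matches it with $H_{A'}(t)$ using the identification $C/(x_m)\cong C^{g'}/(x_m^2)$. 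You instead lift everything to $\widetilde{C}=C[v;\sigma']$ and obtain the isomorphism structurally, via the $\pm 1$-eigenspace decomposition of $\widetilde{C}$ under $h'$, resting on the identity that the $(-1)$-eigenspace of $g'$ on $C$ is $x_mC^{g'}$. That identity is true and provable from your hypotheses --- every word in $x_1,\dots,x_m$ is a $g'$-eigenvector with eigenvalue $(-1)^{\#\{x_m\text{'s}\}}$, and normality of $x_m$ together with the commutation $g'\sigma_m'=\sigma_m'g'$ (which the paper also verifies) lets you pull one $x_m$ out of any odd word --- but it is the one load-bearing step you assert rather than prove, so it deserves a sentence of justification. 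What each approach buys: yours is self-contained and explains \emph{why} $w^2-\lambda pu$ is the only new relation, avoiding trace formulas entirely; the paper's Molien computation sidesteps the eigenspace analysis and only needs a surjection plus a dimension count, at the cost of a longer generating-function calculation. Both proofs leave the same routine verifications (that $\theta$ is a well-defined automorphism, that $\hat{E}$ is AS regular, and that $\Theta$ is regular normal) at a comparable level of detail.
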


\begin{proof} By Lemma \ref{zzlem5.4}(2),
$$(C/(\Omega_1\cdots,\Omega_n))^g=C^{g'}/(\Omega_1,\cdots,\Omega_n)$$
where $\{\Omega_1,\cdots,\Omega_n\}$ is a sequence of regular normalizing
elements in $C^{g'}$.

By the hypotheses, $C^{g'}$ is AS regular. Since $x_m$ is a normal element
in $C$, $C^{g'}$ is generated by $\{x_1,\cdots,x_{m-1}, x_{m}^2\}$.
Formally set $x'_m:=x_m^2$, $y_m := x_m v \in C[v;\sigma']$ and $v_2:=v^2$. 
Suppose that
conjugation by $x_m$ is the graded automorphism $\sigma_m'$ of $C$, i.e. 
$x_mc = \sigma'(c)x_m$ for
all $c \in C$.  Then for any $c \in C$ we have $y_m
c=\sigma_m'(\sigma'(c)) y_m$, $v_2 c= \sigma'^2(c) v_2$, and $v_2
y_m=\lambda^2 y_m v_2$.  The map $\sigma_{m}'\circ\sigma'$ is an
automorphism of $C^{g'}$ since for $c \in C^{g'}$ we have 
$x_m \sigma'(c) = \sigma_m'(\sigma'(c))x_m$, so
applying $g'$ gives $-x_m g'(\sigma'(c)) = g'(\sigma_m'(\sigma'(c)) )(-x_m)$ or
$x_mg'(\sigma'(c)) = g'(\sigma_m'(\sigma'(c))x_m$, so
that $x_m(\sigma'(g'(c))) = x_m\sigma'(c) = \sigma_m'(\sigma'(c))x_m 
= g'(\sigma_m'(\sigma'(c)))x_m $,
and hence $\sigma_m'(\sigma'(c)) = g'(\sigma_m'(\sigma'(c))) $.
Let $W$ be the iterated Ore extension
$$W:=C^{g'}[Y_m;\sigma_{m}'\circ\sigma'][V_2; \sigma'^2],$$ where we use $Y_m$
and $V_2$ as independent variables of degrees $d + \text{deg } x_m$ 
and $2d$, respectively. Then $W$ is AS regular.

We claim that $A^h\cong W/ (\Omega_1,\dots,\Omega_n, Y_m^2-\lambda
x'_{m}V_2)=:A'$, which is a cci by definition. First, there is an 
algebra surjective map from $A'$ to $A^h$ sending
$x_i\to \overline{x_i}$ for $i<m$, $x_m^2\to \overline{x_m}^2=: 
\overline{x'_m}$, $Y_m\to \overline{x_m} v=:\overline{y_m}$
and $V_2\to v^2=:v_2$, where bar indicates the image in $A$. Since 
$x_m$ is a normal element, there is a short exact sequence
\begin{equation}
\label{E5.5.1}\tag{E5.5.1} 0\to C[\deg x_m]\xrightarrow{r_{x_m}}
C\to C/(x_m)\to 0, \end{equation} which implies that the Hilbert
series of $C$ is $H_C(t)=H_{C/(x_m)}(t)(1-t^{\deg x_m})^{-1}$.
Applying $g'$ to \eqref{E5.5.1}, it follows that the trace
of $g'$ is
$$Tr_C(g',t)= H_{C/(x_m)}(t)\; \frac{1}{1+t^{\deg x_m}}.$$
An argument similar to the proof of Lemma \ref{zzlem1.6} shows that
$$Tr_A(h,t)=H_{C/(x_m)}(t)\frac{\prod_{i=1}^n (1-t^{\deg \Omega_i})}
{(1+t^{\deg x_m})(1+t^d)},$$
where $d=\deg v$, and the Hilbert series of $A$ is
$$H_A(t)=H_{C/(x_m)}(t)\frac{\prod_{i=1}^n (1-t^{\deg \Omega_i})}
{(1-t^{\deg x_m})(1-t^d)}.$$ By Molien's 
theorem, $H_{A^h}(t) = (H_{A}(t) + Tr_A(h,t))/2$, so
\begin{equation}\label{E5.5.2}\tag{E5.5.2}
H_{A^h}(t)=H_{C/(x_m)}(t)\frac{(1+t^{d+\deg x_m})\prod_{i=1}^n
(1-t^{\deg \Omega_i})} {(1-t^{2\deg x_m})(1-t^{2d})}.
\end{equation}
By comparing their Hilbert series, one observes that $C/(x_m)\cong
C^{g'}/(x_m^2)$. Then
$$\begin{aligned}
H_{A^h}(t) &=H_{C/(x_m)}(t)\frac{(1+t^{d+\deg x_m})\prod_{i=1}^n (1-t^{\deg
\Omega_i})} {(1-t^{2\deg
x_m})(1-t^{2d})}\\
&=H_{C^{g'}/(x_m^2)}(t)\frac{(1+t^{d+\deg x_m})\prod_{i=1}^n
(1-t^{\deg \Omega_i})} {(1-t^{2\deg x_m})(1-t^{2d})}
\\
&=H_{C^{g'}}(t)\frac{(1+t^{d+\deg x_m})\prod_{i=1}^n (1-t^{\deg
\Omega_i})} {(1-t^{2d})}\\
&=H_{C^{g'}}(t)\frac{(1-t^{2d+2\deg x_m})\prod_{i=1}^n (1-t^{\deg
\Omega_i})}
{(1-t^{2d})(1-t^{d+\deg x_m})}\\
&=H_{A'}(t).
\end{aligned}
$$
Therefore $A^h\cong A'$, completing the proof.
\end{proof}

A result of Kac-Watanabe \cite{KW} and Gordeev \cite{G1} states that
if $G$ is a finite subgroup of $GL_n(k)$ acting naturally on the
commutative polynomial ring $k[x_1, \cdots, x_n]$ and the fixed
subring $k[x_1, \cdots, x_n]^G$ is a complete intersection, then $G$
is generated by bireflections (elements such that $\rank(g-I) \leq
2$). This result establishes a connection between ``complete intersection''
and ``bireflection''. We would like to extend this connection to the
noncommutative case. First we need to understand what a
bireflection in the noncommutative setting is.

\begin{definition}
\label{zzdef5.6}\cite{KKZ4} Let $A$ be a noetherian connected graded
algebra of GK-dimension $n$.  We call a graded automorphism $g$ of
$A$ a {\it bireflection} if its trace has the form:
$$Tr_A(g,t) = \frac{p(t)}{(1-t)^{n-2} q(t)},$$
where $p(t)$ and $q(t)$ are integral polynomials with $p(1)q(1) \neq
0$.

As in the classical case, it is convenient to view a reflection in 
the sense of Definition
\ref{zzdef1.7} as a bireflection, also.
\end{definition}

Note that a bireflection is called a quasi-bireflection in
\cite{KKZ4}. At this point we are not able to prove a version
of the Kac-Watanabe and Gordeev theorem for arbitrary
AS regular algebras.  In this paper we verify a version of the theorem
for a class of noncommutative algebras. We make the following
definitions.

\begin{definition}
\label{zzdef5.7}
Let $A$ be a noetherian connected graded algebra that is a gci.
Let $\Gamma$ be a subgroup of $\Aut(A)$. Let $G$ be a finite
subgroup of $\Gamma$ and consider the following four conditions:
\begin{enumerate}
\item[(i)]
$A^G$ is a classical complete intersection.
\item[(ii)]
$A^G$ is a complete intersection of GK type.
\item[(iii)]
$A^G$ is cyclotomic Gorenstein and $G$ is generated by bireflections .
\item[(iii')]
$A^G$ is cyclotomic Gorenstein.
\end{enumerate}
Then
\begin{enumerate}
\item
We say $A$ is {\it $\Gamma$-Gordeev} if (ii) implies (iii) for
every finite subgroup $G\subset \Gamma$.
\item
The algebra $A$ is said to be {\it $\Gamma$-Kac} (respectively,
{\it strongly $\Gamma$-Kac}) if (iii) (respectively, (iii'))
implies (ii) for every finite subgroup $G\subset \Gamma$.
\item
If (ii) and (iii) are equivalent for every finite subgroup
$G\subset \Gamma$, then $A$ is called {\it $\Gamma$-Gordeev-Kac}.
\item
If (i), (ii) and (iii) are equivalent for every finite subgroup
$G\subset \Gamma$, then $A$ is called {\it $\Gamma$-Gordeev-Kac-Watanabe}
(or {\it $\Gamma$-GKW}).
\item
Finally, if (i), (ii), (iii) and (iii') are equivalent for every finite
subgroup $G\subset \Gamma$, then $A$ is called {\it strong
$\Gamma$-Gordeev-Kac-Watanabe} (or {\it strong $\Gamma$-GKW}).
\end{enumerate}
If $\Gamma$ is the whole group $\Aut(A)$, then the prefix ``$\Gamma$-''
is omitted.
\end{definition}

Using the definition, the commutative polynomial ring
$k[x_1, \cdots, x_n]$ is Gordeev by the Kac-Watanabe and Gordeev theorem.
Theorem \ref{zzthm0.1} implies that AS regular algebras of dimension 2
are strong GKW. Theorem \ref{zzthm0.3} says that the down-up algebras
are Gordeev-Kac.

\begin{lemma}
\label{zzlem5.8} Let $A$ be a noetherian connected graded domain of 
finite GK-dimension, and
let $G$ be a finite subgroup of $\Aut(A)$. Let $R$ be the subgroup of
$G$ generated by bireflections in $G$. Then, for every $h\in
G\setminus R$, the induced map $h'$ of $h$ on $A^R$ is not a
bireflection.
\end{lemma}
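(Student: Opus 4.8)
The plan is to prove the statement by contradiction: if the induced automorphism $h'$ on $A^R$ were a bireflection, I will show that $h$ must itself be a bireflection of $A$ (or the identity), and hence that $h\in R$, contradicting the hypothesis $h\in G\setminus R$. The engine of the argument is a Molien-type averaging formula relating $Tr_{A^R}(h',t)$ to the traces $Tr_A(hr,t)$ for $r\in R$, together with the elementary fact that the order of the pole at $t=1$ of a sum of rational functions is at most the maximum of the pole orders of the summands.

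First I would check that $R$ is normal in $G$. Since the trace is a conjugation invariant, $Tr_A(gbg^{-1},t)=Tr_A(b,t)$ for all $g\in G$, so conjugation permutes the bireflections of $G$ and stabilizes $R$; in particular $h$ normalizes $R$ and $h'$ is a well-defined graded automorphism of $A^R$. The Reynolds operator $\pi=\tfrac{1}{|R|}\sum_{r\in R}r$ then projects each finite-dimensional piece $A_i$ onto $(A^R)_i$ and is compatible with $h$, so a block-triangular trace computation on each $A_i$ yields
$$Tr_{A^R}(h',t)=\frac{1}{|R|}\sum_{r\in R}Tr_A(hr,t).$$

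Now set $n=\GKdim A=\GKdim A^R$ and let $\nu(\phi)$ denote the order of the pole at $t=1$ of the trace function $Tr_A(\phi,t)$ (rational by the standing rationality hypotheses of Section \ref{zzsec1}); recall that $\phi$ is a bireflection exactly when $\nu(\phi)\in\{n-1,n-2\}$ (reflections included, by Definition \ref{zzdef5.6} and its remark), and that $\nu(\phi)\le n$ always. Assuming $h'$ is a bireflection gives $\nu(h')\ge n-2$. Since the two sides of the displayed identity have the same pole order, and the right side has pole order at most $\max_{r\in R}\nu(hr)$, there is some $r_0\in R$ with $\nu(hr_0)\ge n-2$. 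If $\nu(hr_0)\in\{n-1,n-2\}$, then $hr_0$ is a bireflection of $A$, hence $hr_0\in R$, and therefore $h=(hr_0)r_0^{-1}\in R$, a contradiction.

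The only remaining possibility is $\nu(hr_0)=n$, and ruling this out for a nonidentity $hr_0$ is, I expect, the main obstacle; it is exactly the point where the domain hypothesis is indispensable. Writing $\phi:=hr_0$, $m$ for its (finite) order, and $\zeta$ for a primitive $m$th root of unity, I would use the eigenspace decomposition $A=\bigoplus_{j=0}^{m-1}A^{(j)}$, which gives $Tr_A(\phi,t)=\sum_{j=0}^{m-1}\zeta^j H_{A^{(j)}}(t)$. Because $A$ is a domain and $A^{(0)}=A^{\langle\phi\rangle}$, the extension $\mathrm{Frac}(A)/\mathrm{Frac}(A^{(0)})$ is $C_m$-Galois, so each $A^{(j)}$ is a rank-one torsion-free $A^{(0)}$-module and hence shares the common top multiplicity $c:=\lim_{t\to1}(1-t)^n H_{A^{(0)}}(t)>0$. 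Consequently the coefficient of $(1-t)^{-n}$ in $Tr_A(\phi,t)$ equals $c\sum_{j=0}^{m-1}\zeta^j$, which vanishes as soon as $m>1$; thus $\nu(\phi)<n$ unless $\phi=\mathrm{id}$. So $\nu(hr_0)=n$ forces $hr_0=\mathrm{id}$, again yielding $h=r_0^{-1}\in R$. In every case we reach a contradiction, so $h'$ is not a bireflection.
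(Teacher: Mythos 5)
Your proof is correct and is essentially the paper's argument run in the contrapositive: the paper uses the same averaging identity \eqref{E5.8.1} and simply observes that every $hg$ with $g\in R$ lies in $G\setminus R$, hence is not a bireflection, hence has trace with pole order at most $n-3$ at $t=1$, so the averaged trace does as well. Your final paragraph ruling out pole order $n$ makes explicit a point the paper absorbs silently --- namely that a non-identity finite-order graded automorphism of a noetherian connected graded domain has trace of pole order strictly less than $\GKdim A$ at $t=1$, which is exactly where the domain hypothesis enters and is a known result from \cite{JiZ}; your Galois-theoretic justification of that fact is only a sketch (in the noncommutative setting one must worry about $\phi$ becoming inner on the quotient division ring, so the index and rank-one claims need more care), but the statement you need is available in the literature and the rest of your argument does not depend on how it is proved.
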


\begin{proof} Since $R$ is normal in $G$, $h: A^R \to A^R$.
By \cite[Lemma 5.2]{JiZ},
\begin{equation}
\label{E5.8.1}\tag{E5.8.1} Tr_{A^R}(h',t)=\frac{1}{|R|}\sum_{g\in R}
Tr_A(hg, t). \end{equation} Let $n=\GKdim A$. Since $hg$ is not a
bireflection, $(1-t)^{n-3}Tr_A(hg, t)$ is analytic at $t = 1$ for all
$g\in R$. Hence \eqref{E5.8.1} implies that $(1-t)^{n-3}Tr_{A^R}(h', t)$
is analytic at $t = 1$, and therefore $h'$ is not a bireflection of $A^R$.
\end{proof}

We finish this section with an example of bireflections and
noncommutative complete intersections that will be used
in Section \ref{zzsec8}.

\begin{example}
\label{zzex5.9} Let $q$ be a nonzero scalar. Let $T_q$ be a skew
polynomial ring generated by $x_1,x_2$ and $y_1,y_2$, where
$\deg x_1=\deg x_2=v>0$ and $\deg y_1=\deg y_2=w>0$, subject to the
relations
$$\begin{aligned}
x_2x_1&=x_1x_2,\\
y_1x_1&=q x_1 y_1,\\
y_1x_2&=q^{-1} x_2y_1,\\
y_2x_1&=q^{-1} x_1 y_2,\\
y_2x_2&=q x_2y_2,\\
y_2y_1&=y_1y_2.
\end{aligned}
$$
Let $\zeta_1, \zeta_2\in \Aut(T_q)$ be defined by
$$\zeta_1: x_1\to x_1, x_2\to x_2, y_1\to -y_1, y_2\to - y_2,$$
and
$$\zeta_2: x_1\to x_2, x_2\to x_1, y_1\to c y_2, y_2\to c^{-1} y_1$$
for some $c\in k^\times$. By choosing a new $y_2$, we may assume $c=1$.

{\bf Case (i)}: We claim that the fixed subring $T_q^{\zeta_1}$ is a cci.
It is easy to see that
$$Tr_{T^q}(\zeta_1,t)=\frac{1}{(1-t^v)^2(1+t^w)^2},$$
so $\zeta_1$ is a bireflection. The fixed subring $T_q^{\zeta_1}$ is isomorphic
to $C/(\Theta)$, where $C$ is the skew polynomial ring
generated by $x_1,x_2$, $Y_1:=y_1^2,Y_2:=y_2^2$ and $Y_3:=y_1y_2$
subject to the
relations
$$\begin{aligned}
x_2x_1&=x_1x_2,\\
Y_1x_1&=q^2 x_1 Y_1,\\
Y_1x_2&=q^{-2} x_2Y_1,\\
Y_2x_1&=q^{-2} x_1 Y_2,\\
Y_2x_2&=q^2 x_2Y_2,\\
Y_2Y_1&=Y_1Y_2,\\
Y_3x_1&=x_1Y_3,\\
Y_3x_2&=x_2Y_3,\\
Y_3Y_1&=Y_1Y_3,\\
Y_3Y_2&=Y_2Y_3,
\end{aligned}
$$
and $\Theta:=Y_1Y_2-Y_3^2$. Therefore $T_q^{\zeta_1}$ is a cci.
Note that $C\cong T_{q^2}[Y_3]$ as ungraded algebras.

{\bf Case (ii)}: Next we prove that the fixed subring $T_q^{\zeta_2}$ is a cci.
Since $T_q$ is a ``twisted tensor product'' of $k[x_1,x_2]$
with $k[y_1,y_2]$, $T_q\cong k[x_1,x_2] \otimes k[y_1,y_2]$ as
graded vector spaces, we can compute the trace of $\zeta_2$ by
computing the trace of the action of $\zeta_2$ on $k[x_1,x_2]$
and $k[y_1,y_2]$ separately. Hence,
$$Tr_{T_q}(\zeta_2,t)=\frac{1}{(1-t^{2v})(1-t^{2w})},$$
and $\zeta_2$ is a bireflection of $T_q$.  Molien's Theorem shows 
that the fixed subring has Hilbert series
$$H_{T_q^{\zeta_2}}(t)= 
\frac{1-t^{2(v+w)}}{(1-t^v)(1-t^w)(1-t^{2v})(1-t^{2w})(1-t^{v+w})}.$$

If $q=\pm 1$, using $H_{T_q^{\zeta_2}}(t)$  one sees that the fixed 
subring $T_q^{\zeta_2}$ is isomorphic
to $C/(\Theta)$, where $C$ is the skew polynomial ring
generated by
$$X_1:=x_1+x_2, \; X_2:=x_1x_2,\; Y_1:=y_1+y_2,\; Y_2:=y_1y_2,$$
$$Z_1:=y_1x_1+y_2x_2,$$
and subject to the relations
$$\begin{aligned}
X_2X_1&=X_1X_2,\\
Y_1X_1&=q X_1 Y_1,\\
Y_1X_2&=X_2Y_1,\\
Y_2X_1&=X_1Y_2,\\
Y_2X_2&=X_2Y_2,\\
Y_2Y_1&=Y_1Y_2,\\
Z_1X_1&=q X_1 Z_1,\\
Z_1X_2&=X_2 Z_1,\\
Z_1Y_1&=qY_1 Z_1,\\
Z_1Y_2&=Y_2 Z_1,
\end{aligned}
$$
and
$$\Theta:=(X_1Y_1-2 q Z_1)^2- q (X_1^2-2X_2)(Y_1^2-2Y_2)- 8 q  X_2Y_2 
+ 2 q X_2Y_1^2 + 2 q X_1^2Y_1$$
is a central element of $C$. Therefore
$T_{q=\pm 1}^{\zeta_2}$ is a cci.

If $q\neq \pm 1$, we claim that again Molien's Theorem can be used 
to show that the fixed subring $T_q^{\zeta_2}$ is isomorphic
to $C/(\Theta_1,\Theta_2)$, where $C$ is the AS regular algebra
generated by
$$X_1:=x_1+x_2, \; X_2:=x_1x_2,\; Y_1:=y_1+y_2,\; Y_2:=y_1y_2,$$
$$Z_1:=q(x_1y_1+x_2y_2)=y_1x_1+y_2x_2, \qquad
Z_2:=x_1y_2+x_2y_1=q(y_2x_1+y_1x_2),$$
and subject to the relations
$$\begin{aligned}
X_2X_1&=X_1X_2,\\
Y_1X_1&= X_1 Y_1+(q^{-1}-1)(Z_2-Z_1),\\
Y_1X_2&=X_2Y_1,\\
Y_2X_1&=X_1Y_2,\\
Y_2X_2&=X_2Y_2,\\
Y_2Y_1&=Y_1Y_2,\\
Z_1X_1&=q X_1 Z_1+q(q^{-1}-q)X_2Y_1,\\
Z_1X_2&=X_2 Z_1,\\
Z_1Y_1&=q^{-1}Y_1 Z_1+(q-q^{-1})X_1Y_2,\\
Z_1Y_2&=Y_2 Z_1,\\
Z_2X_1&=q^{-1}X_1Z_2+(q-q^{-1})X_2Y_1,\\
Z_2X_2&=X_2Z_2,\\
Z_2Y_1&=qY_1Z_2+(1-q^2)X_1Y_2,\\
Z_2Y_2&=Y_2Z_2,\\
Z_2Z_1&=Z_1Z_2+(1-q^2)X_1^2Y_2+(q^2-1)X_2Y_1^2,
\end{aligned}
$$
with a central regular sequence $\{\Theta_1, \Theta_2 \}$, where
$$\Theta_1:=Y_1X_1-q^{-1}X_1Y_1-(1-q^{-2})Z_1$$ and
$$\Theta_2:=Z_2Z_1-q^{-2}Z_1Z_2-(q^2-q^{-2})X_2(Y_1^2-2Y_2)$$
$$=(1-q^{-2})Z_1Z_2 +(1-q^2)X_1Y_2 + (q^{-2}-1)X_2Y_1^2 
+ 2(q^2 - q^{-2})X_2Y_2.$$

First one checks that the associated elements of $T_q$ satisfy the
fifteen relations in $C$ and the additional relations $\Theta_1$ and 
$\Theta_2$.
Next one checks the Diamond Lemma to see that $X_1, X_2, Y_1, Y_2, Z_1$ 
and $Z_2$ form
a PBW basis for $C$.   Then we filter $C$ with the filtration ${F}$, 
where degree $Z_i = 5$ and degree $X_i$ = degree $Y_i = 3$,
so that there is an algebra surjection from $\gr_F C $ onto the skew 
polynomial ring $D$, where $D$ is
an algebra of the form $k_{p_{i,j}}[w_1, \dots, w_6]$, which is an AS 
regular domain of dimension 6.  The PBW basis for $C$ shows that 
there is a vector space isomorphism from
 $\gr_F C$ to $D$, and hence the algebra map from $\gr_F C$ to $D$ 
is an isomorphism.  As $\gr _F C$ is an AS regular domain of 
dimension 6, it follows that $C$ is an AS regular domain of dimension 6.

It can be checked that $\Theta_1$ and $\Theta_2$ are central elements 
of $C$.  As $C$ is a domain, $\Theta_1$ is a regular element of $C$.
Now using the filtration $\cal{F}$, where degree $X_i = $ degree 
$Y_i = $  degree $Z_i = 1$, we see that
$$  \gr_{\cal{F}}(\frac{C} {(\Theta_1)}) \cong 
\frac{\gr_{\cal{F}} C}{(\gr_{\cal{F}}\Theta_1)} \cong
\frac{k[X_1,Y_1]}{(X_1Y_1)}\langle Y_1,Y_2,Z_1,Z_2 \rangle,$$
is an iterated Ore extension, where
$\gr_{\cal{F}} \Theta_2$ is a regular element of 
$\gr_{\cal{F}} (C/(\Theta_1)) $, and hence $\Theta_2$ is a regular 
element of $ C/(\Theta_1) $.  It follows that $\{ \Theta_1, \Theta_2 \}$ 
is a regular central sequence of $C$, and
therefore $T_q^{\zeta_2}$ is a cci.

{\bf Case (iii)}: Let $G=\langle \zeta_1,\zeta_2\rangle$. We show that 
$T_q^G$ is a cci.
Note that $G\cong C_2\times C_2$. So $T_q^{G}=(T_q^{\zeta_1})^{\phi_2}$,
where $\phi_2$ is the induced automorphism of $\zeta_2$ in
$T_q^{\zeta_1}$.
By case (i), $T_q^{\zeta_1}\cong (T_{q^2}[Y_3])/(\Omega)$. Since
$\phi_2$, as an automorphism of $T_{q^2}[Y_3]$, preserves $Y_3$ and
$\Omega$, it follows from Lemma \ref{zzlem5.4}(1,2) that it
suffices to show $T_{q^2}^{\phi_2}$ is a cci. Now $\phi_2$ is
a $\zeta_2$ with $c$ replaced by $c^2$ (and $\deg Y_1=\deg Y_2=2w$).
By case (ii), $T_{q^2}^{\phi_2}$ is a cci, and therefore  $T_q^G$ is a cci.
\end{example}

%
%
%

\section{Filtrations}
\label{zzsec6}
In the first part of this section we review the May spectral sequence
\cite{Ma} that is associated to a filtered algebra.
Let $\Lambda$ be a group, usually taken to be ${\mathbb Z}^d$ for some
integer $d$. A $\Lambda$-graded space $X$ is called
$\Lambda$-locally finite if $X=\bigoplus_{g\in \Lambda} X_g$, where
each $X_g$ is finite dimensional over $k$. Assume that
$X:= \bigoplus_{n\in {\mathbb Z}, g\in \Lambda} X^n_g$ is a
${\mathbb Z}\times \Lambda$-graded, $\Lambda$-locally finite, complex
with differential $d$ of degree $(1, 0)\in {\mathbb Z}\times \Lambda$.

\begin{definition}
\label{zzdef6.1} A {\it filtration} on the complex $X$ is a sequence
of ${\mathbb Z}\times \Lambda$-graded subcomplexes of $X$, say $F:=\{
F(n)\mid n\in {\mathbb Z}\}$, such that the following hold:
\begin{enumerate}
\item
$F(n+1)\subset F(n)$ for all $n$,
\item
$\bigcap_{n\in {\mathbb Z}} F(n)=\{0\}$, and
\item
$\bigcup_{n\in {\mathbb Z}} F(n)= X$.
\end{enumerate}
The {\it associated graded complex} is defined to be
$$\gr_F X=\bigoplus_{n\in {\mathbb Z}} F(n)/F(n+1)$$
with the induced differential.
\end{definition}

Later we consider filtrations only on complexes that are concentrated
in degree $\{0\}\times \Lambda$;  in this case, the differential
is trivial. For any $a\in F(n)$,  let $\widehat{a}$
denote the element $a+F(n+1)$ in $F(n)/F(n+1)$. Then $\gr_F X$ is a
${\mathbb Z}^2\times \Lambda$-graded $k$-space where the second
${\mathbb Z}$-grading comes from the filtration. For any $a$, the
differential $d$ is defined to be
$$d(\widehat{a})=\widehat{d(a)}$$
for all $\widehat{a}\in F(n)/F(n+1)$.

Since $\gr_F X$ is a complex, we can compare the cohomologies
$H(\gr_F X)$ with $H(X)$.  Next is a standard lemma
\cite[Theorem 2.6]{Mc}.

\begin{lemma}
\label{zzlem6.2} Retain the above notation. Suppose that $X$ is
$\Lambda$-locally finite. Then there is a convergent spectral
sequence
$$E^{p,q}_1:=H^{p+q}(F(p)/F(p+1))\Longrightarrow H^{p+q}(X).$$
\end{lemma}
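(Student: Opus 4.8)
The plan is to build the spectral sequence from the exact couple attached to the decreasing filtration $F$ and then to extract convergence from the $\Lambda$-local finiteness hypothesis, which is the only ingredient beyond the purely formal construction. Since this is the classical spectral sequence of a filtered cochain complex, most of the work is bookkeeping; the genuine points to verify are that the abutment is really $H^{p+q}(X)$ and that the sequence converges in the strong sense.

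First I would set up the exact couple. For each $p$, the inclusion of subcomplexes $F(p+1)\hookrightarrow F(p)$ and the projection $F(p)\twoheadrightarrow F(p)/F(p+1)$ form a short exact sequence of complexes, whose long exact cohomology sequences assemble into an exact couple with $D^{p,q}=H^{p+q}(F(p))$ and $E_1^{p,q}=H^{p+q}(F(p)/F(p+1))$; the three maps are the connecting homomorphism $E_1\to D$, the map $D\to D$ induced by $F(p+1)\hookrightarrow F(p)$, and the map $D\to E_1$ induced by projection. Deriving this exact couple in the usual way produces pages $E_r^{p,q}$ with differentials $d_r\colon E_r^{p,q}\to E_r^{p+r,q-r+1}$, and by construction $E_1^{p,q}=H^{p+q}(F(p)/F(p+1))$, which is exactly the asserted $E_1$-term. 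One may equivalently describe $E_r^{p,q}=Z_r^{p,q}/B_r^{p,q}$ with $Z_r^{p,q}=\{x\in F(p):dx\in F(p+r)\}$ modulo the appropriate boundary and lower-filtration terms; either description serves.

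The crux is convergence, and this is where $\Lambda$-local finiteness enters. Fix a total cohomological degree $n=p+q$ and a weight $g\in\Lambda$, and write $X^n_g$ for the corresponding graded piece, which is finite dimensional by hypothesis. The subspaces $F(p)\cap X^n_g$ form a decreasing filtration of the finite-dimensional space $X^n_g$. By condition (2) of Definition \ref{zzdef6.1}, $\bigcap_p F(p)=\{0\}$ forces $F(p)\cap X^n_g=0$ for all $p\gg 0$, and by condition (3), $\bigcup_p F(p)=X$ forces $F(p)\cap X^n_g=X^n_g$ for all $p\ll 0$. Hence, in each fixed tridegree, only finitely many associated graded pieces $(F(p)/F(p+1))^n_g$ are nonzero, so the filtration is bounded on every graded component. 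Boundedness in each degree is precisely the standard hypothesis guaranteeing that for each $(p,q,g)$ the terms $E_r^{p,q}$ stabilize at a finite stage to $E_\infty^{p,q}$ and that $E_\infty^{p,q}\cong F(p)H^{p+q}(X)/F(p+1)H^{p+q}(X)$ for the filtration induced on cohomology; this is the content of \cite[Theorem 2.6]{Mc}. Combining the two steps yields the stated convergent spectral sequence.

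The main obstacle I anticipate is convergence rather than construction: exhaustiveness and Hausdorffness of $F$ alone would in general yield only conditional convergence, and one must invoke local finiteness to upgrade this to honest strong convergence. The one delicate point is that boundedness must be checked after fixing \emph{both} the $\mathbb{Z}$-degree $n$ and the $\Lambda$-weight $g$: since the filtration index $p$ ranges over all of $\mathbb{Z}$, it is the finite-dimensionality of each $X^n_g$ (not of $X^n$ as a whole) that makes the filtration finite on that piece, and it is exactly this $\Lambda$-refinement that the hypothesis of the lemma supplies.
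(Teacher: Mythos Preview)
Your proposal is correct and follows essentially the same route as the paper: use $\Lambda$-local finiteness to reduce to the case of a bounded filtration, then invoke \cite[Theorem 2.6]{Mc}. The paper's proof is simply a terse version of yours --- it fixes $g\in\Lambda$, notes that the entire complex $X_g$ is finite dimensional (hence the filtration is bounded on it), and cites the same reference; your explicit setup of the exact couple and your degree-by-degree analysis are not needed, but they do no harm. One small remark: the hypothesis actually gives finite-dimensionality of the whole $\Lambda$-component $X_g=\bigoplus_n X^n_g$, not merely of each $X^n_g$ separately, so the reduction to a bounded filtration is even more immediate than you suggest.
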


\begin{proof} Let $g\in \Lambda$. Restricting to a $g$-homogeneous
component of $X$, we may assume that $X$ is finite dimensional, so
the filtration is bounded. The assertion now follows from
\cite[Theorem 2.6]{Mc}.
\end{proof}

Let $A$ be a ${\mathbb Z}\times \Lambda$-graded algebra which is a dg
(differential graded) algebra with respect to the first grading, and
is a locally finite algebra with respect to the second
grading.

\begin{definition}
\label{zzdef6.3} Let $A$ be a dg algebra.
An {\it algebra filtration} of $A$ is a filtration of
the complex $A$ that satisfies the following extra conditions:
\begin{enumerate}
\item
$k\subset F(0)$,
\item
$F(1)\subset F(0)_{*, \geq 1}$,
\item
$F(n)F(m)\subset F(n+m)$ for all $n,m\in {\mathbb Z}$.
\end{enumerate}
The {\it associated dg algebra} is defined to be
$$\gr_F A=\bigoplus_{n\in {\mathbb Z}} F(n)/F(n+1).$$
\end{definition}

Next we consider a version of the May spectral sequence associated 
to a filtered
algebra; see \cite[Theorem 3]{Ma}. We say $A$ is {\it augmented} if there is
a dg algebra homomorphism $\epsilon: A\to k$. Suppose
$\Lambda={\mathbb Z}^d$. We say a $\Lambda$-graded vector
space $X$ is {\it connected graded} if $X=\bigoplus_{g\in {\mathbb N}^d}
X_g$ and $X_0=k$.

\begin{proposition}
\label{zzpro6.4} 
Let $A$ be a ${\mathbb Z}\times \Lambda$-graded
filtered dg algebra that is $\Lambda$-locally finite and connected graded.
Then there is a convergent spectral sequence
$$\Tor^{\gr A}_{p+q}(k,k)(p)\Longrightarrow \Tor^{A}_{p+q}(k,k).$$
\end{proposition}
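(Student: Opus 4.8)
The plan is to compute $\Tor^A(k,k)$ by the reduced bar construction of the augmented dg algebra $A$, to equip that construction with the filtration induced by $F$, and then to feed the resulting filtered complex into Lemma~\ref{zzlem6.2}. Throughout, write $\bar A$ for the augmentation ideal $\ker(\epsilon\colon A\to k)$; since $A$ is connected graded in the $\Lambda$-grading, $\bar A=\bigoplus_{g\neq 0}A_g$. Let $B(A)$ denote the total complex of the bar bicomplex whose $n$th column is $\bar A^{\otimes n}$, equipped with the bar (multiplication) differential together with the internal dg differential of $A$. Because $A$ is augmented and connected graded, the bar construction is a resolution computing $H(B(A))\cong\Tor^A(k,k)$, the homological degree recording the column index $n$ and the $\Lambda$-grading being carried along. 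The same construction applied to $\gr_F A$ computes $\Tor^{\gr_F A}(k,k)$.

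Next I would define the filtration on $B(A)$. On the $n$th column set
$$F(m)\bigl(\bar A^{\otimes n}\bigr)=\sum_{p_1+\cdots+p_n\geq m}(F(p_1)\cap\bar A)\otimes\cdots\otimes(F(p_n)\cap\bar A),$$
and put $F(m)B(A)=\bigoplus_{n\geq 0}F(m)\bigl(\bar A^{\otimes n}\bigr)$. Conditions (1)--(3) of Definition~\ref{zzdef6.1} follow from the corresponding properties of $F$ on $A$ together with $\Lambda$-local finiteness, which makes each $\Lambda$-homogeneous component finite dimensional, so that the induced filtration is bounded there and hence Hausdorff and exhaustive. That the two differentials preserve $F$ is exactly where the algebra-filtration axioms enter: the internal differential has filtration degree $0$ by Definition~\ref{zzdef6.1}, while the bar differential is built from the multiplication of $A$ and so does not lower filtration degree by condition (3) of Definition~\ref{zzdef6.3}. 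Thus $B(A)$ is a filtered $\Lambda$-locally finite complex, and Lemma~\ref{zzlem6.2} applies.

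The key identification is that $\gr_F B(A)\cong B(\gr_F A)$ as complexes. At the level of vector spaces, the associated graded of the tensor-product filtration on $\bar A^{\otimes n}$ is $(\overline{\gr_F A})^{\otimes n}$; this uses that associated graded commutes with tensor products when the filtrations are locally finite, which holds by $\Lambda$-local finiteness and connectedness. The associated graded of the bar differential is the bar differential of $\gr_F A$ precisely because the associated graded of the multiplication of $A$ is the multiplication of $\gr_F A$, and likewise the associated graded of the internal differential is the internal differential of $\gr_F A$. Hence the two complexes agree, and
$$H^{p+q}\bigl(F(p)/F(p+1)\bigr)=\Tor^{\gr_F A}_{p+q}(k,k)(p),$$
the extra grading $(p)$ being the new $\mathbb{Z}$-grading that the filtration places on $\gr_F A$.

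Combining these, Lemma~\ref{zzlem6.2}, reindexed from its cohomological statement to the homological grading of $\Tor$, yields the convergent spectral sequence
$$\Tor^{\gr A}_{p+q}(k,k)(p)\Longrightarrow\Tor^A_{p+q}(k,k),$$
convergence coming from the boundedness of the filtration on each $\Lambda$-homogeneous component. The main obstacle is the verification that $\gr_F B(A)\cong B(\gr_F A)$: one must check carefully that passing to associated graded commutes both with the tensor powers defining the bar construction and with each of the two differentials, and it is the algebra-filtration axioms of Definition~\ref{zzdef6.3} together with local finiteness that make this work.
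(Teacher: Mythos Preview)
Your proposal is correct and follows essentially the same approach as the paper's proof: both compute $\Tor^A(k,k)$ via the bar complex $B(A)$ on the augmentation ideal, equip it with the induced filtration, invoke Lemma~\ref{zzlem6.2}, and then identify the $E_1$-page via $\gr_F B(A)\cong B(\gr_F A)$. Your write-up is in fact more detailed than the paper's, which simply asserts the key isomorphism $\gr B(A)\cong B(\gr A)$ without spelling out the tensor-product filtration or the verification that the differentials respect it.
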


\begin{proof} Let $I:=\ker \epsilon$ be the augmented ideal of $A$.
Let $B(A)$ be the bar complex $(T(sI), \partial)$ where $s$ is the
homological degree shift, see \cite[Section 3]{Ma}.
Then $\Tor^A_n(k,k)$ can be computed by \cite[(3.4)]{Ma} as follows
$$\Tor^A_n(k,k)=H^n(B(A)).$$
Since $A$ is filtered, so is $B(A)$ by the induced filtration on $I$.
Since $A$ is $\Lambda$-locally finite and connected graded,
so is $B(A)$. By Lemma \ref{zzlem6.2}, there is a convergent
spectral sequence
$$H^{p+q}(\gr B(A)(p))\Longrightarrow H^{p+q}(B(A))=\Tor^A_n(k,k).$$
Since $\gr B(A)\cong B(\gr A)$, the left-most term in the above
displayed equation is isomorphic to $\Tor^{\gr A}_{p+q}(k,k)(p)$, and
the assertion follows.
\end{proof}

The filtrations used in \cite[Theorem 3]{Ma} are bounded in one of the
directions; see \cite[Condition (4.1) or (4.1')]{Ma}. The filtration
in Proposition \ref{zzpro6.4} is not necessarily bounded in any
direction, but the algebra $A$ has an extra $\Lambda$-grading that
is $\Lambda$-locally finite and connected graded. This is the only
difference between \cite[Theorem 3]{Ma} and Proposition \ref{zzpro6.4}.
Now we return to connected ${\mathbb Z}$-graded algebras $A$.

\begin{lemma}
\label{zzlem6.5} Let $A$ be a filtered algebra and let $B=\gr_F A$.
Suppose that
\begin{enumerate}
\item[(a)]
$\Lambda={\mathbb Z}$,
\item[(b)]
$A$ is connected graded with respect to the $\Lambda$-grading,
\item[(c)]
$F$ is a filtration of $A$ and each $F(n)$
is a ${\mathbb Z}$-graded vector space, and
\item[(d)]
the Rees ring $\bigoplus_{n\in {\mathbb Z}} F(n) t^{-n}$ is noetherian.
\end{enumerate}
Then the following hold.
\begin{enumerate}
\item
$A$ and $B$ are noetherian.
\item
If $B$ has finite global dimension, so does $A$.
\item
If $B$ is a gci, then so is $A$.
\item
If $B$ is Gorenstein {\rm{(}}respectively, AS Gorenstein{\rm{)}}, so is $A$.
\item
If $B$ is AS regular, so is $A$.
\end{enumerate}
\end{lemma}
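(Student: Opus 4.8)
The plan is to route everything through the Rees ring $R := \bigoplus_{n \in \mathbb{Z}} F(n)\, t^{-n} \subseteq A[t,t^{-1}]$, which is noetherian by hypothesis (d). The two basic facts I would record first are that $t$ is a central regular homogeneous element of $R$ with $R/tR \cong B = \gr_F A$, and that $t-1$ is a central regular (non-homogeneous) element of $R$ with $R/(t-1)R \cong A$; regularity of $t-1$ follows from the separatedness $\bigcap_n F(n)=0$ (equivalently, $t-1$ is already regular in $A[t,t^{-1}]$). Part (1) is then immediate, since $A$ and $B$ are both quotients of the noetherian ring $R$.

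For parts (2) and (3) I would use the May-type spectral sequence of Proposition \ref{zzpro6.4}, namely the convergent $\Lambda$-graded $\Tor^B_{p+q}(k,k)(p) \Longrightarrow \Tor^A_{p+q}(k,k)$. In (2), finite global dimension of $B$ means $\Tor^B_i(k,k)=0$ for $i>d$, so the $E_1$-page vanishes in homological degrees $>d$, whence $\Tor^A_i(k,k)=0$ for $i>d$; as $A$ is connected graded this gives $\gldim A = \pdim {}_Ak \le d$. In (3), each $E_\infty^{p,q}$ is a subquotient of $E_1^{p,q}=\Tor^B_{p+q}(k,k)(p)$, and in a fixed homological degree $n$ summing over the filtration index $p$ recovers $\Tor^B_n(k,k)$; convergence therefore yields the termwise bound $\dim_k\Tor^A_n(k,k)\le\dim_k\Tor^B_n(k,k)$. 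Feeding this into \eqref{E5.1.1} gives $gci(A)=\GKdim\Ext^*_A(k,k)\le gci(B)<\infty$, so $A$ is a gci.

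For part (4) I would argue in two steps via $t$ and then $t-1$. The going-up step is that $B=R/tR$ being (AS) Gorenstein forces $R$ to be (AS) Gorenstein of injective dimension one greater: applying $\Ext^*_R(-,R)$ to $0 \to R[\deg t] \xrightarrow{t} R \to B \to 0$, together with the Rees change-of-rings isomorphism $\Ext^{i}_R(M,R) \cong \Ext^{i-1}_B(M,B)$ for $B$-modules $M$ (cf. \cite[Proposition 3.4]{Le}), computes $\Ext^*_R(k,R)$ from $\Ext^*_B(k,B)$ and concentrates it in a single homological degree with one-dimensional top term. The descent step quotients by the central regular element $t-1$: by Rees's lemma \cite[Proposition 3.4(b)]{Le}, $A=R/(t-1)R$ is (AS) Gorenstein with injective dimension dropped back to that of $B$, the AS index being read off as in Lemma \ref{zzlem1.3}. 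Finally part (5) is the conjunction of AS Gorenstein, finite global dimension, and finite GK-dimension: (4) supplies AS Gorenstein, (2) supplies finite global dimension, and $\GKdim A \le \GKdim B < \infty$ (GK-dimension does not grow in passing from $\gr_F A$ to $A$ when the Rees ring is noetherian), so $A$ is AS regular.

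The main obstacle I anticipate is the AS-grading bookkeeping in part (4): the element $t-1$ is not homogeneous for the Rees grading, so one must check that quotienting by it genuinely preserves the \emph{Artin--Schelter} form of the Gorenstein condition (concentration and one-dimensionality of $\Ext^d(k,-)$ with the correct index) with respect to the surviving $\Lambda$-grading on $A$, not merely finite injective dimension. A cleaner alternative that sidesteps the non-homogeneous element is to assemble a direct $\Lambda$-graded spectral sequence $\Ext^{p+q}_B(k,B) \Longrightarrow \Ext^{p+q}_A(k,A)$ dual to the one in Proposition \ref{zzpro6.4}; AS Gorenstein-ness of $B$ then collapses it and transfers both the vanishing and the one-dimensional top term directly to $A$, which is the route I would prefer if that spectral sequence can be constructed in this filtered setting.
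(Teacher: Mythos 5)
Your overall plan is sound and, where it overlaps with the paper, it follows the same route: part (3) is argued exactly as in the paper, via the May-type spectral sequence of Proposition \ref{zzpro6.4} and the formula \eqref{E5.1.1} (the paper writes $gci(A)<gci(\gr A)$, but the correct and sufficient statement is the inequality $\leq$ you derive from the termwise bound on $\dim\Tor$). The main difference is that the paper's proof of (1), (2) and (4) is a sequence of citations --- (1) and (2) are ``graded versions'' of \cite[Ch.~2, Prop.~1.2.3]{LvO} and \cite[Ch.~1, Thm.~7.2.11]{LvO}, and (4) is quoted from \cite[Proposition 3.1]{Bj} --- whereas you supply actual arguments. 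Your proof of (1) via the Rees ring is the standard one and is what the cited result amounts to. Your proof of (2) via the vanishing of the $E_1$-page of the same spectral sequence, together with $\gldim A=\pdim{}_Ak$ for connected graded noetherian $A$, is a legitimate and arguably cleaner alternative to the filtered--graded global dimension transfer in \cite{LvO}; it buys uniformity, since one spectral sequence then handles both (2) and (3).

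The one place where your first-choice argument would not go through as written is the descent step in (4). The element $t-1$ is central and regular in $R$, but it is not homogeneous for the Rees grading, and for the $\Lambda$-grading alone $R$ is not connected graded (its $\Lambda$-degree-zero part is $k[t]$), so Rees's lemma in the form cited in the paper \cite[Proposition 3.4(b)]{Le} --- which requires a homogeneous normal element of positive degree in a connected graded algebra --- does not apply to $R/(t-1)R$. Transferring the \emph{Artin--Schelter} form of the Gorenstein condition across this dehomogenization is precisely the content of \cite[Proposition 3.1]{Bj}, and the mechanism there is the filtered--graded $\Ext$ comparison you describe as your ``cleaner alternative'': a convergent spectral sequence from $\Ext^*_B(k,B)$ to $\Ext^*_A(k,A)$ which collapses when $B$ is AS Gorenstein. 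You correctly identified this as the obstacle and named the right fix, so the gap is one of execution rather than of idea; to make the write-up airtight you should commit to the $\Ext$ spectral sequence (or simply cite Bj\"ork) for (4) rather than the $t-1$ route. Part (5) is fine: you correctly note that finite GK-dimension passes from $B$ to $A$ because their Hilbert series agree.
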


\begin{proof}
(1) This is  a graded version of
\cite[Proposition 1.2.3 in Chapter 2]{LvO}.

(2) This is  a graded version of
\cite[Theorem 7.2.11 in Chapter 1]{LvO}.

(3) By \eqref{E5.1.1}
$$gci(A)=\limsup_{n\to\infty}
\frac{\log (\sum_{i=0}^n \dim \Tor^{A}_i(k,k))}{\log n}.$$
Since $B$ is a complete intersection of GK-type, $gci(\gr A)<\infty$.
By Proposition \ref{zzpro6.4}, $gci(A)<gci(\gr A)<\infty$.
So $A$ is a gci.

(4) This is a consequence of \cite[Proposition 3.1]{Bj}.

(5) This follows from parts (4) and (2).
\end{proof}

For the rest of this section we assume the hypotheses of 
Lemma \ref{zzlem6.5}(a,b,c,d)
whenever a filtration appears. The following lemma is straightforward.

\begin{lemma}
\label{zzlem6.6}
Let $F$ be a filtration on a $\Lambda$-graded $A$, with the
degree being $\deg_{\Lambda}$. Then the associated graded algebra
$\gr_F A$ is a ${\mathbb Z}\times \Lambda$-graded algebra with
$\deg_{\gr} \widehat{a}=(n, \deg_{\Lambda} a)$ if $a\in F(n)\setminus F(n+1)$.
\end{lemma}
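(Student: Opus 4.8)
The plan is to check directly that the two gradings on $\gr_F A$---the filtration grading by $n\in\mathbb{Z}$ and the ambient $\Lambda$-grading---are mutually compatible and compatible with the induced multiplication. First I would record that, by the standing hypotheses on $F$ (each $F(n)$ is a $\Lambda$-graded subspace of $A$ with $F(n+1)\subseteq F(n)$, as in Definition \ref{zzdef6.1} together with Lemma \ref{zzlem6.5}(c)), every subquotient $F(n)/F(n+1)$ inherits a $\Lambda$-grading: since $F(n+1)$ is a $\Lambda$-graded subspace of the $\Lambda$-graded space $F(n)$, the quotient decomposes as $\bigoplus_{\alpha\in\Lambda}\bigl(F(n)_\alpha+F(n+1)\bigr)/F(n+1)$. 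Taking the direct sum over $n$ then exhibits $\gr_F A=\bigoplus_n F(n)/F(n+1)$ as a $\mathbb{Z}\times\Lambda$-graded vector space, the component of bidegree $(n,\alpha)$ being the image of $F(n)_\alpha$ in $F(n)/F(n+1)$. In particular, if $a\in F(n)\setminus F(n+1)$ is $\Lambda$-homogeneous of degree $\deg_\Lambda a$, then $\widehat{a}$ lies in bidegree $(n,\deg_\Lambda a)$, which is the asserted formula.

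Next I would verify that the multiplication inherited from $A$ is well defined and respects this bigrading. For $\widehat{a}\in F(n)/F(n+1)$ and $\widehat{b}\in F(m)/F(m+1)$ one sets $\widehat{a}\cdot\widehat{b}:=\widehat{ab}\in F(n+m)/F(n+m+1)$; this uses the algebra-filtration axiom $F(n)F(m)\subseteq F(n+m)$ of Definition \ref{zzdef6.3}(3) to place $ab$ in $F(n+m)$, and independence of the choice of representatives follows from $F(n+1)F(m)\subseteq F(n+m+1)$ and $F(n)F(m+1)\subseteq F(n+m+1)$, again by the same axiom. Thus the product has filtration degree $n+m$, so it is homogeneous of degree $n+m$ for the first grading.

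Finally, compatibility with the second grading is immediate from the fact that $A$ is itself a $\Lambda$-graded algebra: if $a,b$ are $\Lambda$-homogeneous of degrees $\alpha,\beta$, then $ab$ is $\Lambda$-homogeneous of degree $\alpha+\beta$, whence $\widehat{a}\cdot\widehat{b}$ has bidegree $(n+m,\alpha+\beta)=(n,\alpha)+(m,\beta)$. Extending bilinearly shows that multiplication sends the bidegree-$(n,\alpha)$ and bidegree-$(m,\beta)$ components into the bidegree-$(n+m,\alpha+\beta)$ component, so $\gr_F A$ is a $\mathbb{Z}\times\Lambda$-graded algebra with the stated degree convention.

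I expect no genuine obstacle here; the statement is bookkeeping, and the only points meriting care are confirming that $F(n+1)$ is a $\Lambda$-graded subspace of $F(n)$ (so that the subquotients are $\Lambda$-graded) and that the single axiom $F(n)F(m)\subseteq F(n+m)$ simultaneously yields well-definedness of the induced product and its correct filtration degree. Everything else descends directly from $A$ being a $\Lambda$-graded algebra.
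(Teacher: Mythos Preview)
Your proposal is correct. The paper offers no proof of this lemma at all; it simply states ``The following lemma is straightforward'' and moves on. Your argument supplies exactly the routine verification the authors omit: that each $F(n)/F(n+1)$ inherits the $\Lambda$-grading from the standing hypothesis that each $F(n)$ is $\Lambda$-graded, and that the filtration axiom $F(n)F(m)\subseteq F(n+m)$ of Definition~\ref{zzdef6.3}(3) makes the induced multiplication well defined and bihomogeneous.
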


Let $g\in \Aut(A)$. We say $g$ {\it preserves the filtration} $F$ if $g(F(n))
\subset F(n)$ for all $n$. A subgroup $G\subset \Aut(A)$ {\it preserves
the filtration} $F$ if every $g\in G$ preserves $F$.

\begin{lemma}
\label{zzlem6.7}
Let $F$ be a filtration of $A$ with $B=\gr_F A$. Let $G$ be a subgroup
of $\Aut(A)$ such that $G$  preserves the filtration $F$.
\begin{enumerate}
\item
For each $g\in G$, there is an induced automorphism
$\widehat{g}: B\to B$ defined by
$$\widehat{g}(\widehat{a})=\widehat{g(a)}$$
for all $a\in F(n)\setminus F(n+1)$.
\item
The map $\Phi: g\to \widehat{g}$ defines a group homomorphism
from $G\to \Aut(B)$.
\item
If $G$ is finite, then $B^{\Phi(G)}=\gr_{F'} A^G$, where $F'$
is the restriction of $F$ to $A^G$.
\item
Every non-trivial element in the kernel of $\Phi$ has infinite order.
\end{enumerate}
\end{lemma}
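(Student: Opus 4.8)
The plan is to treat the four parts in order, since the first two are formal and set up the objects used in the last two. For part (1) I would first note that because $G$ is a group preserving $F$, each $g$ in fact satisfies $g(F(n))=F(n)$: the inclusion $g(F(n))\subseteq F(n)$ holds by hypothesis, and applying this to $g^{-1}\in G$ gives the reverse. Hence $g$ restricts to a bijection of $F(n)$ carrying $F(n+1)$ onto itself, and so descends to a linear map on each quotient $F(n)/F(n+1)$. Well-definedness is immediate: if $a-a'\in F(n+1)$ then $g(a)-g(a')=g(a-a')\in F(n+1)$. Multiplicativity follows from the definition of the product in $\gr_F A$ together with $g(ab)=g(a)g(b)$, and $\widehat{g^{-1}}$ is a two-sided inverse, so $\widehat{g}\in\Aut(B)$. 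Part (2) is then the one-line check $\widehat{gh}(\widehat a)=\widehat{(gh)(a)}=\widehat{g}(\widehat{h(a)})=(\widehat g\circ\widehat h)(\widehat a)$.

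For part (3) I would first record that each $\widehat g$ is a bigraded automorphism of $B$ (it preserves both the filtration-index grading and the $\Lambda$-grading), so $B^{\Phi(G)}$ is a graded subspace and it suffices to match homogeneous elements. The inclusion $A^G\hookrightarrow A$ induces a map $\gr_{F'}A^G\to B$ whose $n$-th piece is $(F(n)\cap A^G)/(F(n+1)\cap A^G)\to F(n)/F(n+1)$; this is injective because $(F(n)\cap A^G)\cap F(n+1)=F(n+1)\cap A^G$, and its image clearly lands in $B^{\Phi(G)}$ since $g(a)=a$ forces $\widehat g(\widehat a)=\widehat a$. The substantive direction, and the step I expect to be the crux, is the reverse containment, where the characteristic-zero hypothesis enters through the Reynolds operator $\pi:=\frac{1}{|G|}\sum_{g\in G}g$. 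Given a homogeneous $\widehat a\in B^{\Phi(G)}$ with $a\in F(n)\setminus F(n+1)$, the fixed condition reads $g(a)-a\in F(n+1)$ for all $g$; setting $b:=\pi(a)$ yields $b\in A^G$, $b\in F(n)$ (as $\pi$ preserves the subspace $F(n)$), and $b-a=\frac{1}{|G|}\sum_g(g(a)-a)\in F(n+1)$. Hence $\widehat b=\widehat a$ with $b\in F(n)\cap A^G=F'(n)$, exhibiting $\widehat a$ in the image of $\gr_{F'}A^G$.

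Part (4) I would prove by a short eigenvalue argument. Let $g\in\ker\Phi$ have finite order; since $k$ has characteristic zero and $g$ preserves the connected grading with each $A_i$ finite dimensional, $g$ is diagonalizable, so $A$ has a $k$-basis of $g$-eigenvectors. For such an eigenvector $a\neq 0$ with eigenvalue $\zeta$, the exhaustive separated filtration provides a unique $n$ with $a\in F(n)\setminus F(n+1)$. The hypothesis $\widehat g=\id$ gives $g(a)-a\in F(n+1)$, that is $(\zeta-1)a\in F(n+1)$; as $a\notin F(n+1)$ this forces $\zeta=1$. Thus every eigenvalue of $g$ equals $1$ and $g=\id$, which is exactly the contrapositive of the assertion that nontrivial elements of $\ker\Phi$ have infinite order. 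The only mild care needed here is the diagonalizability of $g$, which is standard for a finite-order graded automorphism acting locally finitely in characteristic zero.
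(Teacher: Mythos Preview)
Your proof is correct. Parts (1)--(3) follow essentially the same route as the paper: the paper dismisses (1) and (2) as clear, and for (3) it too uses the Reynolds operator, phrased via exactness of $\int=\frac{1}{|G|}\sum_{g}g$ applied to the short exact sequences $0\to F(n+1)\to F(n)\to F(n)/F(n+1)\to 0$ rather than element-wise, but the content is identical.

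Where you diverge is part (4). The paper argues via part (3): if $g\neq 1$ has finite order, set $G=\langle g\rangle$; then $A^G\neq A$, so $\gr_{F'}A^G\neq \gr_F A$ (a Hilbert-series comparison), hence by (3) $(\gr_F A)^{\Phi(G)}\neq \gr_F A$ and $\Phi(g)\neq 1$. Your eigenvalue argument is more direct and avoids invoking (3): diagonalize the finite-order $g$ on each graded piece (using that $k$ is algebraically closed of characteristic zero), pick a nonzero eigenvector $a$ with eigenvalue $\zeta$, locate it in $F(n)\setminus F(n+1)$ by exhaustiveness and separatedness, and read off $\zeta=1$ from $(\zeta-1)a\in F(n+1)$. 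Both are short; the paper's approach has the virtue of reusing (3), while yours is self-contained and makes the role of the filtration hypotheses (exhaustive, separated) explicit.
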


\begin{proof} (1,2) This is clear.

(3) Since $F'(n)=F(n)\cap A^G$, we have $\gr_{F'} A^g\subset
B^{\Phi(G)}$ (even if $G$ is infinite). Now assume $G$ is finite,
and let $\int$ be the Reynolds operator $\frac{1}{|G|}\sum_{g\in G} g$.
Since $G$ is finite and ${\text{char }} k=0$, $\int$ defines an
exact functor. Starting with the short exact sequence
$$0\to F(n+1)\to F(n)\to F(n)/F(n+1)\to 0$$
for each $n$, we obtain that a short exact sequence
$$0\to \inth \cdot F(n+1)\to \inth\cdot F(n)\to \inth
\cdot (F(n)/F(n+1))\to 0,$$
which is
$$0\to F(n+1)^G\to F(n)^G\to (F(n)/F(n+1))^{\Phi(G)}\to 0.$$
Since $F(n)^G=F(n)\cap A^G$, the assertion follows.

(4) Let $g$ be an element in the kernel of $\Phi$. 
If $1\neq g\in G$ has finite order, we replace $G$ by the subgroup
generated by $g$. By part (3), $\gr_{F'} A^G=(\gr_F A)^{\Phi(G)}$.
Since $g\neq 1$, $A^G \neq A$. Thus $(\gr_F A)^{\Phi(G)}\neq \gr_F A$.
Thus $\Phi(G)\neq \{1\}$, or $\Phi(g)\neq 1$, and hence the assertion follows.
\end{proof}

In the next proposition, when working with the Hilbert series of
$\gr_F A$ and the trace of $g\in \Aut(\gr_F A)$, only the $\Lambda$-grading
(or the second grading) of $\gr_F A$ is used.

\begin{proposition}
\label{zzpro6.8}
Let $F$ be a filtration and $B=\gr_F A$. Consider $B$ as a connected
graded algebra using the second grading of $B$. Let $g\in \Aut(A)$ be such
that $g$ preserves the filtration $F$. Then the following
hold.
\begin{enumerate}
\item
$Tr_B(\Phi(g),t)=Tr_A(g,t)$.
\item
$H_B(t)=H_A(t)$. As a consequence, $B$ is cyclotomic if and only if
$A$ is.
\item
$g$ is a reflection if and only if $\Phi(g)$ is.
\item
$g$ is a bireflection if and only if $\Phi(g)$ is.
\item
$\hdet g=1$ if and only if $\hdet \Phi(g)=1$.
\item
Suppose both $A$ and $B$ are AS Cohen-Macaulay domains
\cite[Definition 0.1]{JoZ}. Then $B$ is AS Gorenstein
{\rm{(}}respectively, cyclotomic Gorenstein{\rm{)}} if and only if $A$ is.
\end{enumerate}
\end{proposition}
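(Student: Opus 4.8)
The whole proposition reduces to part (1), which I would prove by a degree-wise passage to the following elementary fact: if $V$ is a finite-dimensional vector space with a descending filtration preserved by an endomorphism $\phi$, then $\tr(\phi)=\tr(\gr\phi)$, since in a basis adapted to the filtration $\phi$ is block upper-triangular with diagonal blocks the induced maps on the successive quotients. Fix a $\Lambda$-degree $d$. By the standing connectedness and local-finiteness hypotheses (Lemma~\ref{zzlem6.5}(b)), the component $A_d$ is finite dimensional, and since $g$ preserves $F$ it preserves the induced finite filtration $\{F(n)_d\}_n$ of $A_d$. The elementary fact gives $\tr(g\mid_{A_d})=\sum_n \tr\!\big(\Phi(g)\mid_{(F(n)/F(n+1))_d}\big)=\tr(\Phi(g)\mid_{B_d})$, where $B_d$ is the $\Lambda$-degree-$d$ component of $B=\gr_F A$ (collapsing the filtration grading). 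Multiplying by $t^d$ and summing over $d$ yields $Tr_A(g,t)=Tr_B(\Phi(g),t)$, i.e. (1).

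Parts (2)--(5) then follow formally, because each notion is extracted from the trace. Setting $g=\id$ in (1) gives $H_A(t)=Tr_A(\id,t)=Tr_B(\id,t)=H_B(t)$, and cyclotomicity is by definition a condition on the Hilbert series (Definition~\ref{zzdef5.2}), giving (2); in particular $\GKdim A=\GKdim B=:n$ via Definition~\ref{zzdef1.1}. A (bi)reflection is exactly an automorphism whose trace has the shape $p(t)/((1-t)^{n-1}q(t))$, respectively $p(t)/((1-t)^{n-2}q(t))$, with $p(1)q(1)\neq 0$ (Definitions~\ref{zzdef1.7} and~\ref{zzdef5.6}), so the equality of traces together with $\GKdim A=\GKdim B$ gives (3) and (4). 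For (5) I would apply Lemma~\ref{zzlem1.3} to both $\id$ and $g$: it shows that $(\hdet g)^{-1}$ is the ratio of the leading Laurent coefficients (the coefficients of the top power $t^l$ in the $t^{-1}$-expansion) of $Tr_A(g,t)$ and of $H_A(t)=Tr_A(\id,t)$, an expression in which the sign $(-1)^d$ cancels and no knowledge of the injective dimension is needed; the identical formula holds for $\Phi(g)$ over $B$. Since the two trace functions and the two Hilbert series agree by (1) and (2), we obtain $\hdet g=\hdet\Phi(g)$, hence (5). Here one uses Lemma~\ref{zzlem1.3} for both algebras, as is legitimate under the standing rationality hypotheses of Remark~\ref{zzrem1.4}.

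For (6), the Hilbert-series identity (2) transfers cyclotomicity at once, so the real content is the equivalence of AS Gorensteinness. The implication ``$B$ AS Gorenstein $\Rightarrow$ $A$ AS Gorenstein'' is already Lemma~\ref{zzlem6.5}(4). For the converse I would invoke a Stanley-type criterion: for a noetherian connected graded AS Cohen--Macaulay domain $C$ of GK-dimension $n$, AS Gorensteinness is equivalent to a functional equation $H_C(t^{-1})=(-1)^n t^{-s}H_C(t)$ on the Hilbert series. This rests on local duality, which identifies the Hilbert series of the canonical module $\omega_C$ (the graded Matlis dual of the top local cohomology $H^n_{\fm}(C)$) with $(-1)^n H_C(t^{-1})$; AS Gorensteinness means $\omega_C\cong C(s)$, and the domain hypothesis is precisely what promotes the equality of Hilbert series of $\omega_C$ and $C(s)$ to an isomorphism. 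As $A$ and $B$ are AS Cohen--Macaulay domains of the same GK-dimension with $H_A=H_B$, the functional equation holds for one exactly when it holds for the other, so $A$ is AS Gorenstein iff $B$ is. Combined with the cyclotomic transfer this gives the parenthetical ``cyclotomic Gorenstein'' statement.

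The main obstacle is exactly this converse direction in (6): parts (1)--(5) are all formal consequences of the single trace identity, whereas detecting AS Gorensteinness from the Hilbert series alone is false in general, and it is here that the Cohen--Macaulay and domain hypotheses become indispensable. I would therefore isolate the Stanley-type criterion (local duality, plus the domain-to-isomorphism upgrade of the canonical module) as the one genuinely non-formal input, taking care to match the GK-dimension, the AS index, and the canonical-module normalizations between $A$ and $B$ before quoting it.
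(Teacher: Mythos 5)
Your proposal is correct and follows essentially the same route as the paper: part (1) is the same telescoping/graded-piece trace computation (the paper sums the identities coming from $0\to F(n+1)\to F(n)\to F(n)/F(n+1)\to 0$, which is your block-triangular argument degree by degree), parts (2)--(5) are read off from the trace exactly as you do, and part (6) rests on the same Stanley-type criterion --- the paper cites it as \cite[Theorem 6.2]{JoZ} rather than re-deriving it from local duality, but the content (AS Gorensteinness of an AS Cohen--Macaulay domain is detected by the Hilbert series) is identical.
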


\begin{proof} (1) Again we consider the short exact sequence of graded
spaces
$$0\to F(n+1)\to F(n)\to F(n)/F(n+1)\to 0.$$
Then
$$Tr_A(g\mid_{F(n)}, t)-Tr_A(g\mid_{F(n+1)}, t)=
Tr_B(\Phi(g)\mid_{F(n)/F(n+1)},t).$$
Since $A$ is $\Lambda$-locally finite, so is $B$. Then, for each fixed
power $t^d$, the coefficients of $t^d$ in $Tr(\Phi(g)\mid_{F(n)/F(n+1)},t)$
are nonzero for only finitely many $n$. Thus
$$\begin{aligned}
Tr_B(\Phi(g),t)&=\sum_{n} Tr_B(\Phi(g)\mid_{F(n)/F(n+1)},t)\\
&=\sum_n Tr(g\mid_{F(n)}, t)-Tr(g\mid_{F(n+1)}, t)\\
&=Tr_A(g,t).
\end{aligned}$$

(2,3,4,5) These are consequence of (1).

(6) By Stanley's theorem \cite[Theorem 6.2]{JoZ}, the AS Gorenstein
property of $A$ is determined by the Hilbert series of $A$.
The assertions follow from part (2).
\end{proof}

Now we are ready to prove Proposition \ref{zzpro0.4}.

\begin{proof} (1) This follows from Lemmas \ref{zzlem6.5}(3) and
\ref{zzlem6.7}(3).

(2) This follows from Proposition \ref{zzpro6.8}(2,6) and Lemma
\ref{zzlem6.7}(3).

(3) This follows from Proposition \ref{zzpro6.8}(4).
\end{proof}

\begin{proposition}
\label{zzpro6.9} Let $A$ be a filtered algebra such that $B:=\gr_F
A$ is a noetherian connected graded AS Gorenstein domain.
Let $\Gamma$ be a subgroup of $\Aut(A)$, and let $\Gamma'=\Phi(\Gamma)$.
\begin{enumerate}
\item
If $B$ is $\Gamma'$-Kac, then $A$ is $\Gamma$-Kac.
\item
If $B$ is strongly $\Gamma'$-Kac, then $A$ is strongly $\Gamma$-Kac.
\end{enumerate}
\end{proposition}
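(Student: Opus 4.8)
The plan is to handle (1) and (2) by one transfer argument, the two cases differing only in whether the bireflection clause is carried along. Fix a finite subgroup $G\subset\Gamma$. Since $\Gamma$ lies in the domain of $\Phi$ it preserves the filtration $F$, and hence so does $G$; put $\widehat G:=\Phi(G)$, a finite subgroup of $\Gamma'$. By Lemma \ref{zzlem6.7}(4) a nontrivial element of $\ker\Phi$ has infinite order, so $\Phi|_G$ is injective and $\widehat G\cong G$, and by Lemma \ref{zzlem6.7}(3) we have $\gr_{F'}A^G=B^{\widehat G}$, where $F'$ is the restriction of $F$ to $A^G$. Before using the results of Section \ref{zzsec6} on the filtered algebra $A^G$, I check their standing hypotheses: the Rees ring of $(A^G,F')$ is the $G$-fixed ring of the Rees ring $\bigoplus_n F(n)t^{-n}$ of $(A,F)$ (with $g(t)=t$), and since the latter is noetherian and $\ch k=0$, this fixed ring is noetherian as well.

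Everything then reduces to transferring the conditions of Definition \ref{zzdef5.7} across the identification $\gr_{F'}A^G=B^{\widehat G}$. The bireflection clause is the easiest: if $G=\langle g_1,\dots,g_r\rangle$ with each $g_i$ a bireflection, then $\widehat G=\langle\Phi(g_1),\dots,\Phi(g_r)\rangle$, and each $\Phi(g_i)$ is a bireflection of $B$ by Proposition \ref{zzpro6.8}(4), so $\widehat G$ is generated by bireflections. The cyclotomic Gorenstein property is transferred by Proposition \ref{zzpro6.8}(6): once $A^G$ and $B^{\widehat G}$ are known to be AS Cohen--Macaulay domains, $A^G$ is cyclotomic Gorenstein if and only if $B^{\widehat G}$ is. Finally the gci property descends by Lemma \ref{zzlem6.5}(3): if $B^{\widehat G}=\gr_{F'}A^G$ is a gci, so is $A^G$.

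Granting these transfers, the two cases close at once. For (1), suppose $A^G$ is cyclotomic Gorenstein and $G$ is generated by bireflections (condition (iii)). Then $B^{\widehat G}$ is cyclotomic Gorenstein and $\widehat G$ is generated by bireflections, so the $\Gamma'$-Kac hypothesis on $B$ makes $B^{\widehat G}$ a gci, whence $A^G$ is a gci by Lemma \ref{zzlem6.5}(3); this is condition (ii), so $A$ is $\Gamma$-Kac. Case (2) is identical with the bireflection clause removed, using strong $\Gamma'$-Kacness in place of $\Gamma'$-Kacness. (Note that ``$B$ is $\Gamma'$-Kac'' already presupposes $B$ is a gci, so $A$ is a gci by Lemma \ref{zzlem6.5}(3) and the conclusions are meaningful.)

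The one genuinely technical point, and the main obstacle, is supplying the AS Cohen--Macaulay hypothesis that Proposition \ref{zzpro6.8}(6) demands for \emph{both} $A^G$ and $B^{\widehat G}$: Lemma \ref{zzlem6.5} transfers Gorensteinness but not Cohen--Macaulayness, so this must be imported. The domain halves are immediate, since $B$, and hence $B^{\widehat G}$, is a domain, and $\gr_F A=B$ a domain forces $A$ and $A^G$ to be domains. For Cohen--Macaulayness I would use that a noetherian connected graded AS Gorenstein domain such as $B$ is AS Cohen--Macaulay, that the fixed ring $B^{\widehat G}$ of an AS Cohen--Macaulay algebra under a finite group in characteristic zero is again AS Cohen--Macaulay, and that AS Cohen--Macaulayness lifts from $\gr_{F'}A^G=B^{\widehat G}$ to $A^G$ along the noetherian Rees ring (a filtered--graded transfer in the spirit of Lemma \ref{zzlem6.5}(4), cf. \cite{Bj}). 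Once these three facts are in hand, the rest is bookkeeping with Lemmas \ref{zzlem6.5} and \ref{zzlem6.7} and Proposition \ref{zzpro6.8}.
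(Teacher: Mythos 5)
Your proof is correct and follows essentially the same route as the paper's: restrict to a finite $G\subset\Gamma$, pass to the induced filtration $F'$ on $A^G$ with $\gr_{F'}A^G=B^{\Phi(G)}$, transfer condition (iii) (resp.\ (iii')) to $(B,\Phi(G))$ via Proposition \ref{zzpro6.8}(4,6), invoke the ($\Gamma'$-)Kac hypothesis on $B$, and descend the gci property by Lemma \ref{zzlem6.5}(3). The only difference is that you explicitly flag and sketch how to supply the AS Cohen--Macaulay domain hypothesis required by Proposition \ref{zzpro6.8}(6), a point the paper's proof passes over silently; that is a reasonable bit of extra care rather than a divergence in method.
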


\begin{proof} Let $G$ be a finite subgroup of $\Gamma$, and let
$G'=\Phi(G)$. The filtration $F$ on $A$ induces a filtration $F'$
on $A^G$ such that $\gr_{F'} A^G=B^{G'}$. The hypotheses
in Lemma \ref{zzlem6.5}(a-d) hold for the new filtration $F'$.

(1) Suppose now that $B$ is $\Gamma'$-Kac. Assume that Definition
\ref{zzdef5.7}(iii) holds for $(A,G)$. Then $A^G$ is cyclotomic
Gorenstein, and $G$ is generated by bireflections.
By Proposition \ref{zzpro6.8}(4) for $g\in G$, and
Proposition \ref{zzpro6.8}(6) for $A^G$, Definition
\ref{zzdef5.7}(iii) holds for $(B,G')$. Since
$B$ is $\Gamma'$-Kac, $B^{G'}$ is a complete intersection of GK-type.
By Lemma \ref{zzlem6.5}(3), $A^G$ is a  complete intersection of GK-type, and
the assertion follows.

(2) The proof is similar to the proof of part (1).
\end{proof}

\begin{definition}
\label{zzdef6.10} Let $A$ be an AS regular algebra and $\Gamma$ be a
subgroup of $\Aut(A)$. We say that $A$ is {\it
$\Gamma$-Shephard-Todd-Chevalley} (or {\it $\Gamma$-STC} for short) if,
for every finite subgroup $G\subset \Gamma$, the fixed subring $A^G$
is AS regular if and only if $G$ is generated by reflections
of $A$.
\end{definition}

\begin{proposition}
\label{zzpro6.10} Let $A$ be a filtered algebra such that $B:=\gr_F
A$ is a noetherian connected graded AS regular algebra.
Let $\Gamma$ be a subgroup of $\Aut(A)$, and let $\Gamma'=\Phi(\Gamma)$.
If $B$ is $\Gamma'$-STC, then $A$ is $\Gamma$-STC.
\end{proposition}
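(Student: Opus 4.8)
The plan is to mimic the proof of Proposition \ref{zzpro6.9}, reducing the Shephard-Todd-Chevalley property for $A$ to that for $B$ one finite subgroup at a time. Fix a finite subgroup $G\subset\Gamma$ and set $G'=\Phi(G)\subset\Gamma'$. Since every nontrivial element of $\ker\Phi$ has infinite order by Lemma \ref{zzlem6.7}(4) while $G$ is finite, the restriction $\Phi|_G\colon G\to G'$ is an isomorphism. The filtration $F$ restricts to a filtration $F'$ on $A^G$ for which the hypotheses of Lemma \ref{zzlem6.5}(a)--(d) again hold, and $\gr_{F'}A^G=B^{G'}$ by Lemma \ref{zzlem6.7}(3); applying Proposition \ref{zzpro6.8}(2) to $A^G$ gives $H_{A^G}(t)=H_{B^{G'}}(t)$. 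Thus the whole question is transported to the two fixed rings $A^G$ and $B^{G'}=\gr_{F'}A^G$, which share a Hilbert series.

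First I would transfer the reflection condition. By Proposition \ref{zzpro6.8}(3), an element $g\in G$ is a reflection of $A$ if and only if $\Phi(g)$ is a reflection of $B$; since $\Phi|_G$ is an isomorphism carrying a generating set of $G$ to one of $G'$, it follows that $G$ is generated by reflections of $A$ if and only if $G'$ is generated by reflections of $B$. Combining this with the hypothesis that $B$ is $\Gamma'$-STC yields
$$G\text{ generated by reflections}\iff G'\text{ generated by reflections}\iff B^{G'}\text{ is AS regular.}$$
Hence Definition \ref{zzdef6.10} for $A$ reduces to the single equivalence $A^G\text{ AS regular}\iff B^{G'}\text{ AS regular}$.

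One direction of this equivalence is immediate: if $B^{G'}=\gr_{F'}A^G$ is AS regular, then $A^G$ is AS regular by Lemma \ref{zzlem6.5}(5). Reading the displayed chain from right to left, this already proves ``$G$ generated by reflections $\Longrightarrow A^G$ AS regular,'' i.e. one half of the STC property for $A$.

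The remaining implication, $A^G\text{ AS regular}\Longrightarrow B^{G'}\text{ AS regular}$, is the main obstacle, and it is \emph{not} supplied by Lemma \ref{zzlem6.5}: that lemma passes homological regularity from the associated graded ring down to the filtered ring, whereas here I must lift it from $A^G$ up to its associated graded $B^{G'}$, and in general the global dimension of an associated graded ring can strictly exceed that of the original (the May spectral sequence of Proposition \ref{zzpro6.4} only bounds $\Tor^{A^G}(k,k)$ by $\Tor^{B^{G'}}(k,k)$, never conversely). To get around this I would exploit that $A^G$ and $B^{G'}$ are fixed rings of the AS regular algebras $A$ (AS regular by Lemma \ref{zzlem6.5}(5)) and $B$ under the isomorphic finite groups $G\cong G'$, and that they have equal Hilbert series. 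Assuming $A$ and $B$ are domains (as in Proposition \ref{zzpro6.9}), so that Proposition \ref{zzpro6.8}(6) applies, the AS Gorenstein property transfers between $A^G$ and $B^{G'}$; the real content is then that for such a fixed ring AS regularity is detected by the Hilbert series, the Molien series having the ``polynomial'' shape $\prod_i(1-t^{d_i})^{-1}$ precisely when the fixed ring is AS regular. Since $H_{A^G}(t)=H_{B^{G'}}(t)$, AS regularity of $A^G$ forces this shape and hence AS regularity of $B^{G'}$, closing the equivalence and the proposition. I expect this Hilbert-series detection of regularity for fixed rings to be the genuinely hard step, the rest being bookkeeping transported through $\Phi$.
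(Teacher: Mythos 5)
Your first half is exactly the paper's argument: reflections transfer through $\Phi$ by Proposition \ref{zzpro6.8}(3), the $\Gamma'$-STC hypothesis gives that $B^{G'}=\gr_{F'}A^G$ is AS regular, and Lemma \ref{zzlem6.5}(5) then yields that $A^G$ is AS regular. The problem is the converse direction. You try to close the equivalence by proving ``$A^G$ AS regular $\Longrightarrow B^{G'}$ AS regular,'' and the step you lean on --- that for a fixed subring of an AS regular algebra under a finite group, AS regularity is detected by the Hilbert series having the form $\prod_i(1-t^{d_i})^{-1}$ --- is nowhere established in the paper and is not a known general fact; it is essentially a strong form of the hard direction of the noncommutative Shephard--Todd--Chevalley problem. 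The only Hilbert-series rigidity result available in this circle of ideas is Stanley's criterion for the AS \emph{Gorenstein} property of AS Cohen--Macaulay domains (\cite[Theorem 6.2]{JoZ}, used in Proposition \ref{zzpro6.8}(6)); nothing analogous is available for finiteness of global dimension. So, as you yourself flag, this is the genuinely hard step, and it is left unproved: that is a real gap.

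The paper avoids the issue entirely. For the converse it never passes through $B$ at all: since $A$ is AS regular (Lemma \ref{zzlem6.5}(5) applied to $A$ itself), the implication ``$A^G$ AS regular $\Longrightarrow G$ is generated by reflections of $A$'' is quoted directly from \cite[Proposition 2.5(b)]{KKZ3}, which is the general (and comparatively easy) direction of the noncommutative STC theorem and requires no hypothesis on $B$. If you replace your Hilbert-series step by that citation, your argument becomes complete and coincides with the paper's.
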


\begin{proof} Let $G$ be a finite subgroup of $\Gamma$ such that $G$
is generated by reflections. Then $G'\subset \Gamma'$ is generated by
reflections of $B$. Since $B$ is $\Gamma'$-STC, $B^{G'}$ is AS regular, so by
Lemma \ref{zzlem6.5}(5), $A^G$ is AS regular, proving one direction 
of the proposition.
The other direction follows from \cite[Proposition 2.5(b)]{KKZ3}.
\end{proof}

\section{Down-up algebras}
\label{zzsec7}

Down-up algebras, denoted by $A(\alpha,\beta,\gamma)$ for 
$\alpha, \beta, \gamma$
parameters in $k$, were defined by
Benkart-Roby \cite{BR} and studied by many others.  
They are $k$-algebras generated by $u$ and $d$,
and subject to the relations
$$\begin{aligned}
d^2u&=\alpha dud+\beta ud^2+\gamma d,\\
du^2&=\alpha udu+\beta u^2d+\gamma u.
\end{aligned}
$$
The universal enveloping algebra of the 3-dimensional Heisenberg Lie
algebra is $A(2,-1,0)$, and there are other interesting special cases,
see \cite{BR, KK}. In this paper, we are interested only in the
graded case, namely, when $\gamma=0$ (and $\deg u=\deg d=1$). It is
well-known that  $A(\alpha,\beta,0)$ is noetherian if and
only if it is AS regular if and only if $\beta\neq 0$ \cite{KMP}.
See \cite{BW} for some discussion about the representation theory
of graded down-up algebras. In this paper $A(\alpha,\beta)$
stands for $A(\alpha,\beta,0)$.

Throughout the rest of the paper, 
let $A=A(\alpha,\beta)$ be a noetherian graded down-up algebras.  We
let $a$ and $b$ be the roots of the ``character polynomial''
$$x^2-\alpha x-\beta=0,$$ and let 
$$\Omega_1 = du - aud \text{ and } \Omega_2 = du - bud.$$
Note that both $a,b \neq 0$ and $\Omega_1 = \Omega_2$ if and only if 
$\alpha^2 = -4\beta$.

\begin{lemma}
\label{zzlem7.1} Both $\Omega_1$ and $\Omega_2$ are regular normal
elements in $A$, and the following relations hold in $A$:
$$\begin{aligned}
u \Omega_1& =b^{-1} \Omega_1 u,\\
d \Omega_1& =b \Omega_1 d,\\
u \Omega_2& =a^{-1} \Omega_2 u,\\
d \Omega_2& =a \Omega_2 d,\\
ud \Omega_i&=\Omega_i ud \text{ for } i=1,2,\\
du \Omega_i&=\Omega_i du \text{ for } i=1,2,\\
\Omega_1\Omega_2&=\Omega_2\Omega_1.
\end{aligned}
$$
Furthermore, the first two equations 
{\rm{(}}as well as the next two equations{\rm{)}} are equivalent to the two
defining relations of $A$.
\end{lemma}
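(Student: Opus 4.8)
The plan is to record first the Vieta relations $a+b=\alpha$ and $ab=-\beta$ for the roots of the character polynomial, together with $a,b\neq 0$ (since $ab=-\beta\neq 0$ by noetherianity), and then to prove the ``Furthermore'' assertion first, as it simultaneously yields the first four displayed equations. I would expand
$$d\Omega_1 = d^2u - a\,dud, \qquad b\Omega_1 d = b\,dud + \beta\,ud^2,$$
so that $d\Omega_1=b\Omega_1 d$ holds if and only if $d^2u=(a+b)dud+\beta\,ud^2=\alpha\,dud+\beta\,ud^2$, which is exactly the first defining relation. A parallel expansion of $u\Omega_1$ and $b^{-1}\Omega_1 u$ (clearing the denominator by multiplying through by $b$) shows that $u\Omega_1=b^{-1}\Omega_1 u$ is equivalent to $du^2=\alpha\,udu+\beta\,u^2d$, the second defining relation. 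Since $a$ and $b$ occur symmetrically in the character polynomial, interchanging their roles shows that $d\Omega_2=a\Omega_2 d$ and $u\Omega_2=a^{-1}\Omega_2 u$ are again equivalent to these same two relations. As the defining relations hold in $A$, this establishes the first four equations and proves the final assertion.

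Next I would obtain the remaining equations formally from the first four. Using $d\Omega_1=b\Omega_1 d$ and then $u\Omega_1=b^{-1}\Omega_1 u$,
$$ud\Omega_1 = b\,u\Omega_1 d = b\,(b^{-1}\Omega_1 u)d = \Omega_1 ud,$$
and symmetrically $du\Omega_1=b^{-1}\,d\Omega_1 u=\Omega_1 du$; the identical computation with $a$ in place of $b$ treats $\Omega_2$. Finally, writing $\Omega_2=du-b\,ud$ and using that $\Omega_1$ commutes with both $du$ and $ud$,
$$\Omega_1\Omega_2 = (du)\Omega_1 - b\,(ud)\Omega_1 = \Omega_2\Omega_1.$$

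It remains to prove normality and regularity. The four relations say that $\Omega_i x\in A\Omega_i$ and $x\Omega_i\in\Omega_i A$ for each generator $x\in\{u,d\}$; as $u$ and $d$ generate $A$, this gives $A\Omega_i=\Omega_i A$, so each $\Omega_i$ is normal. For regularity I would first note that $\Omega_1$ and $\Omega_2$ are nonzero: because the defining relations are cubic, $A$ agrees with the free algebra $k\langle u,d\rangle$ in degrees $\le 2$, so $u^2,ud,du,d^2$ are linearly independent in $A_2$, and since $a,b\neq 0$ the elements $\Omega_i$ are nonzero. As $A$ is a noetherian graded down-up algebra with $\beta\neq 0$, it is AS regular of global dimension three and hence a domain, so every nonzero element---in particular each $\Omega_i$---is regular. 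I expect this final point, the domain property of $A$, to be the only step not reducible to direct computation; rather than reprove it I would cite the standard fact that noetherian graded down-up algebras are domains.
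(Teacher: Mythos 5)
Your proposal is correct and follows essentially the same route as the paper, which simply asserts that the first two (and next two) equations are equivalent to the defining relations, that normality follows, that regularity follows since $A$ is a domain, and that the last three equations follow from the first four; you have merely written out the computations the paper leaves to the reader. The Vieta-formula expansions, the derivation of $ud\Omega_i=\Omega_i ud$ and $\Omega_1\Omega_2=\Omega_2\Omega_1$ from the first four relations, and the appeal to the domain property all check out.
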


\begin{proof} It is easy to check that the first two 
(as well as the next two) are equivalent to
the two defining relations of $A$, and hence $\Omega_1$ and $\Omega_2$ are
normal elements of $A$. Since $A$ is a domain, these are regular elements.
The last three equations follow from the first four.
\end{proof}
Next we review the graded automorphism groups of $A$, and describe two
filtrations of $A$, and their associated graded rings, that will be used later.
\begin{lemma}
\label{zzlem7.2} Let $A=A(\alpha,\beta)$ with $\beta\neq 0$.
\begin{enumerate}
\item  \cite[Proposition 1.1]{KK} The graded automorphism group
of $A$ is given by
$$\Aut(A)=\begin{cases}
GL_2(k)& {\text{if}} \;(\alpha,\beta)=(0,1),\\
GL_2(k)& {\text{if}} \;(\alpha,\beta)=
(2,-1),\\
U=\biggl\{ \begin{pmatrix} a&0\\0&b\end{pmatrix}, \begin{pmatrix}
0&c\\d&0\end{pmatrix}: a,b,c,d\in k^\times\biggr\} &{\text{if}}
\; \beta=-1, \alpha\neq 2,\\
O=\biggl\{\begin{pmatrix} a&0\\0&b\end{pmatrix}: a,b\in
k^\times\biggr\} & {\text{otherwise.}}\end{cases}.$$
\item
Let $\Omega=du-aud$ with $\deg \Omega=2$. Define a filtration $F$ of $A$ by
$$\begin{aligned}
F(0)&=k,\\
F(-1)&=k+ kd+ku+k\Omega=:V,\\
F(-n)&=V^{n}, \; \forall \; n>1.
\end{aligned}
$$
Then $\gr_F A$ is isomorphic to the skew polynomial ring
$$B:=k\langle d,u,\Omega\rangle/
(d\Omega =b\Omega d, \Omega u=bu\Omega, du=aud).$$
\item
Suppose $\beta=-1$ and $\alpha\neq \pm 2$. Let $\Omega_1=du-aud$ and
$\Omega_2=du-b ud$ with $\deg \Omega_1=\deg \Omega_2=2$. Define a
filtration $F$ by
$$\begin{aligned}
F(0)&=k,\\
F(-1)&=k+ kd+ku+k\Omega_1+k \Omega_2=:V,\\
F(-n)&=V^{n}, \; \forall \; n>1.
\end{aligned}
$$
Then $\gr_F A$ is isomorphic to the algebra
$$B:=(k[d,u]/(du))\langle \Omega_1,\Omega_2\rangle
/(relations)$$
where $relations$ are
$$d\Omega_1 =b\Omega_1 d,\; \Omega_1 u=bu\Omega_1,\; d\Omega_2
=a\Omega_2 d, \; \Omega_2 u=au\Omega_2,\; [\Omega_1,\Omega_2]=0.$$
\end{enumerate}
\end{lemma}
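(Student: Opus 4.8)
The plan is to prove parts (2) and (3) by one common three-step strategy, since part (1) is quoted from \cite{KK}. In each case I will exhibit a surjective graded algebra homomorphism $\phi\colon B\to \gr_F A$ from the candidate algebra $B$ onto the associated graded ring, and then promote it to an isomorphism by matching Hilbert series with respect to the original grading $\deg u=\deg d=1$, $\deg\Omega_i=2$. Passing to the associated graded of an exhaustive separated filtration with finite-dimensional graded pieces preserves Hilbert series, so $H_{\gr_F A}(t)=H_A(t)=\frac{1}{(1-t)^2(1-t^2)}$, the known series of a noetherian graded down-up algebra (see also Proposition \ref{zzpro6.8}(2)). Thus the whole problem reduces to computing $H_B(t)$ and checking it equals this.

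To build $\phi$, I send the generators of $B$ to the classes $\widehat d,\widehat u,\widehat\Omega_i$ of $d,u,\Omega_i$ in $\gr_F A$; since $V=F(-1)$ generates $A$ and $F(-n)=V^n$, these classes generate $\gr_F A$, so $\phi$ is surjective once it is well defined. The relations of $B$ involving the $\Omega_i$ (the normalizing relations $d\Omega_i=\dots$, $\Omega_i u=\dots$ and $[\Omega_1,\Omega_2]=0$) hold \emph{exactly} in $A$ by Lemma \ref{zzlem7.1}, hence a fortiori in $\gr_F A$. The remaining relations arise from a degree drop: although $d,u$ have filtration degree $-1$ and $\Omega_i$ have original degree $2$, the $\Omega_i$ are placed in $F(-1)$, so products of $d,u$ acquire lower-order corrections. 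In case (2), $du-aud=\Omega\in F(-1)$ while $du,aud\in F(-2)$, so $\widehat d\,\widehat u=a\,\widehat u\,\widehat d$ in $F(-2)/F(-1)$, which is the relation $du=aud$. In case (3), $\alpha\neq\pm2$ forces $\alpha^2\neq-4\beta$, hence $a\neq b$, and one solves $ud=(b-a)^{-1}(\Omega_1-\Omega_2)$ and $du=(b-a)^{-1}(b\Omega_1-a\Omega_2)$; both lie in $k\Omega_1+k\Omega_2\subset F(-1)$, so \emph{both} $\widehat u\,\widehat d$ and $\widehat d\,\widehat u$ vanish in $F(-2)/F(-1)$, which is exactly the defining relation $du=0$ of $k[d,u]/(du)$.

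Next I identify $B$ as an iterated Ore extension to read off $H_B(t)$. In case (2), $B=k_a[u,d][\Omega;\tau]$, where $k_a[u,d]$ is the quantum plane $du=aud$ and $\tau$ is conjugation by $\Omega$, namely $\tau(u)=bu$, $\tau(d)=b^{-1}d$; one checks $\tau$ fixes the relation $du-aud$ and so is a graded automorphism, giving $H_B(t)=\frac{1}{(1-t)^2}\cdot\frac{1}{1-t^2}=H_A(t)$. In case (3), $B=\big(k[d,u]/(du)\big)[\Omega_1;\tau_1][\Omega_2;\tau_2]$ with $\tau_1(u)=bu,\ \tau_1(d)=b^{-1}d$ and $\tau_2(u)=au,\ \tau_2(d)=a^{-1}d,\ \tau_2(\Omega_1)=\Omega_1$; since $H_{k[d,u]/(du)}(t)=\frac{1+t}{1-t}$, one gets $H_B(t)=\frac{1+t}{(1-t)(1-t^2)^2}=\frac{1}{(1-t)^2(1-t^2)}=H_A(t)$. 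In both cases $H_B(t)=H_{\gr_F A}(t)$ with all coefficients finite, so the graded surjection $\phi$ is a degreewise isomorphism, completing the proof.

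The main obstacle is conceptual rather than computational: one must track the \emph{filtration} degree (where $d,u,\Omega_i$ all sit in $F(-1)$) separately from the original grading, and recognize that the relations of $\gr_F A$ split into the exact normalizing relations of Lemma \ref{zzlem7.1} and the ``collapsed'' relations produced when $du$ (and, in case (3), also $ud$) becomes congruent modulo $F(-1)$ to a lower product. The one delicate point is verifying that $\tau_1,\tau_2$ genuinely descend to automorphisms of the non-domain $k[d,u]/(du)$ in case (3), so that the Ore presentation and the series $\frac{1+t}{1-t}$ are legitimate.
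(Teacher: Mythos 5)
Your proposal is correct and follows essentially the same route as the paper: exhibit a graded surjection between the candidate algebra $B$ and $\gr_F A$, then upgrade it to an isomorphism by matching Hilbert series, using that $H_{\gr_F A}(t)=H_A(t)=\frac{1}{(1-t)^2(1-t^2)}$. You merely supply more detail than the paper's terse argument --- the split of the relations into the exact normalizing ones from Lemma \ref{zzlem7.1} versus the collapsed ones coming from the degree drop, and the iterated Ore presentation used to compute $H_B(t)$ --- but the skeleton is identical.
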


\begin{proof} The proofs of (2) and (3) are similar, so we give 
only the proof of (3).

The elements of $\gr_F(A)$ satisfy the relations of $B$ so there is an 
algebra map from $\gr_F(A)$ onto $B$.  We can filter elements of $A$ by 
ordered pairs $\mathbb{N} \times \mathbb{N}$, where the first degree is 
the degree in the filtration $F$ and the second degree is the natural 
degree in the graded ring $A$ (so that $\deg(u) = \deg(d)= (1,1)$ and 
$\deg(\Omega_i) = (1,2)$). Considering the grading in second component,  
$\gr_F(A)$ is a graded ring with the same Hilbert series as
$A$ under the natural grading ($\deg(u) = \deg(d) = 1$).
Under this grading $B$ is a graded ring with Hilbert series
$$\frac{1-t^2}{(1-t^2)^2(1-t)^2,}$$
which is the same as the Hilbert series of $\gr_F(A)$. 
Hence the map from $\gr_F(A)$ to $B$ is an isomorphism.
\end{proof}

\begin{lemma}
\label{zzlem7.3} Let $A=A(\alpha,\beta)$ with $\beta\neq 0$
and $g$ be an element in $\Aut(A)\subset GL_2(k)$.
\begin{enumerate}
\item \cite[Theorem 1.5]{KK}
If $\lambda$ and $\mu$ are eigenvalues of $g$ as a matrix in $GL_2(k)$,
then the trace of $g$ is
$$Tr(g,t)=\frac{1}{(1-\lambda t)(1-\mu t)(1-\lambda \mu t^2)}.$$
\item \cite[Theorem 1.5]{KK}
The homological determinant of $g$ is
$$\hdet g=\lambda^2\mu^2=(\det g)^2.$$
As a consequence, if $H$ is a finite subgroup of $\Aut(A)$ with
trivial homological determinant, then either $H$ is a finite subgroup
of $SL_2(k)$, or $H$ satisfies the short exact sequence
\eqref{E3.0.1}.
\item
$g$ is a bireflection of $A$ if and only if either $\det
g=1$ {\rm{(}}i.e., $\lambda\mu=1${\rm{)}} or 
$g\in GL_2(k)$ is a classical reflection
{\rm{(}}i.e., either $\lambda=1$ or $\mu=1${\rm{)}}.
\item
Let $B=A(\alpha',\beta')$ be another down-up algebra. If $G\subset
\Aut(A)$ and $G\subset \Aut(B)$, where both $\Aut(A)$ and $\Aut(B)$ are
viewed as subgroups of $GL_2(k)$. Then $A^G$ is AS Gorenstein 
{\rm{(}}respectively, cyclotomic Gorenstein{\rm{)}} if and only if $B^G$ is.
\end{enumerate}
\end{lemma}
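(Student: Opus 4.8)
The plan is to deduce both equivalences from the observation, recorded in parts~(1) and~(2) of this lemma, that neither the trace function $Tr(g,t)$ nor the homological determinant $\hdet g$ depends on the parameters $\alpha,\beta$: each is expressed solely in terms of the eigenvalues $\lambda,\mu$ of $g$ viewed as an element of $GL_2(k)$. Since $G$ is by hypothesis the \emph{same} finite subgroup of $GL_2(k)$ sitting inside both $\Aut(A)$ and $\Aut(B)$, every $g\in G$ has identical eigenvalues in the two settings; hence $Tr_A(g,t)=Tr_B(g,t)$ for all $g\in G$ by part~(1), and the value $\hdet g$ is likewise the same whether $g$ is regarded as an automorphism of $A$ or of $B$ by part~(2). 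If $G=\{1\}$ the statement is immediate, since then $A^G=A$ and $B^G=B$ are AS regular with common Hilbert series $\frac{1}{(1-t)^2(1-t^2)}$, which is cyclotomic; so I would assume $G\neq\{1\}$.

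For the cyclotomic half I would apply Molien's theorem on each side to write
$$H_{A^G}(t)=\frac{1}{|G|}\sum_{g\in G}Tr_A(g,t) \quad\text{and}\quad H_{B^G}(t)=\frac{1}{|G|}\sum_{g\in G}Tr_B(g,t).$$
The two right-hand sides agree term by term by part~(1), so $H_{A^G}(t)=H_{B^G}(t)$. Because being cyclotomic is a property of the Hilbert series alone [Definition~\ref{zzdef5.2}(1)], it follows at once that $A^G$ is cyclotomic if and only if $B^G$ is.

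For the AS Gorenstein half I would use part~(2): since $\hdet g=(\det g)^2$, the $G$-action on $A$ has trivial homological determinant precisely when $(\det g)^2=1$ for every $g\in G$, a condition making no mention of $\alpha,\beta$, and therefore holding for the $G$-action on $A$ exactly when it holds for the $G$-action on $B$. Proposition~\ref{zzpro0.2}(2) (that is, \cite[Corollary 4.11]{KKZ2}) then gives that $A^G$ is AS Gorenstein if and only if this common triviality condition holds, if and only if $B^G$ is AS Gorenstein. Combining the two halves, and recalling that cyclotomic Gorenstein means cyclotomic together with AS Gorenstein [Definition~\ref{zzdef5.2}(2)], yields the full statement.

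I expect the argument to be essentially formal once parts~(1) and~(2) are in hand; the only point needing attention is the choice of characterization of the AS Gorenstein property. The route above relies on Proposition~\ref{zzpro0.2}(2), whose validity for down-up algebras rests on their having no quasi-reflections [Proposition~\ref{zzpro0.2}(1)]. Alternatively, one could bypass this and obtain the AS Gorenstein equivalence directly from the Hilbert-series equality via Stanley's theorem \cite[Theorem 6.2]{JoZ}, exactly as in the proof of Proposition~\ref{zzpro6.8}(6); but that version first requires checking that $A^G$ and $B^G$ are AS Cohen--Macaulay, which is the genuine (if standard) obstacle in that approach, so I would prefer the $\hdet$ route.
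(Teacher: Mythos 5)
Your proof is correct. The cyclotomic half is exactly the paper's argument: Molien's theorem plus part (1) gives $H_{A^G}(t)=H_{B^G}(t)$, and cyclotomicity is a property of the Hilbert series alone. For the AS Gorenstein half, however, you take a genuinely different route. The paper argues via Stanley's theorem \cite[Theorem 6.1]{JoZ} that the AS Gorenstein property of $A^G$ is determined by its Hilbert series, and then again invokes the equality of Hilbert series; you instead use the homological-determinant criterion of Proposition~\ref{zzpro0.2}(2) (i.e.\ \cite[Corollary 4.11]{KKZ2}), observing that by part (2) the triviality of $\hdet$ on $G$ is a purely matrix-theoretic condition independent of $(\alpha,\beta)$. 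Both work. Your route leans on a result specific to down-up algebras (whose proof ultimately rests on the absence of quasi-reflections, Proposition~\ref{zzpro0.2}(1)) and requires the small side-step for $G=\{1\}$, which you handle; the paper's route is more uniform in that a single Hilbert-series argument delivers both the Gorenstein and the cyclotomic equivalences, but, as you correctly point out, Stanley's theorem carries an AS Cohen--Macaulay hypothesis that the paper leaves implicit (it holds here because $A^G$ is a direct summand of the AS Cohen--Macaulay algebra $A$ in characteristic zero, but it does need to be said). Your identification of that hidden hypothesis, and your decision to route around it, is a reasonable and defensible choice.
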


\begin{proof} (3) This follows from the definition and part (1).

(4) By Stanley's Theorem \cite[Theorem 6.1]{JoZ}, the property that
$A^G$ is AS Gorenstein is dependent only on the Hilbert series of
$A^G$. By Molien's Theorem, the Hilbert series of $A^G$ is 
dependent only on the trace $Tr(g,t)$ for all $g\in G$. By part (1),
$Tr(g,t)$ is dependent only on the eigenvalues of $g$. Hence the AS
Gorenstein property of $A^G$ is dependent only on the matrix
properties of $G$. Therefore $A^G$ is AS Gorenstein if and only if $B^G$
is.

By definition, the cyclotomic property is dependent only on the Hilbert
series of $A^G$, which is again dependent only on the matrix
properties of $G$. Therefore $A^G$ is cyclotomic Gorenstein if and
only if $B^G$ is.
\end{proof}

See also Proposition \ref{zzpro0.2} for further properties of
$A$. In particular, $A^H$ is AS Gorenstein if and only if $H$
has trivial homological determinant. To prove Theorem \ref{zzthm0.3}
we need analyze all finite subgroups $H\subset \Aut(A)$ with $\hdet=1$.
We start with some examples in the next section.



\section{AS Gorenstein fixed subrings for down-up algebras}
\label{zzsec8}

In this section, for some values of the parameters $\alpha$ and $\beta$, 
we discuss the fixed subrings $A^G$, where $A$ is a
noetherian graded down-up algebra, and $G$ is one of the groups $Q_i$, 
$i=1,\cdots,8$, that were introduced in Section \ref{zzsec3}.
All these  invariant subrings $A^G$ are AS Gorenstein by
Lemma \ref{zzlem7.3}(2) and Proposition \ref{zzpro0.2}(2), and our interest
is in whether the $A^G$ are complete intersections of some type.  In some
cases we are able to  express the algebra explicitly as a
classical complete intersection.  The down-up algebras
$A(-1,2)$ and $A(0,1)$, whose graded automorphism groups are
all of $GL_2(k)$, will be discussed in Section \ref{zzsec9}.

Let $A$ be a noetherian graded down-up algebra $A(\alpha,\beta)$
where $\beta\neq 0$. Let $a$ and $b$ be the roots of ``character
polynomial''
$$x^2-\alpha x-\beta=0.$$
Throughout most of this section we assume that $a\neq b$ 
(i.e. $\alpha^2 \neq -4\beta$); at the end of the section we consider 
$A(-2,-1)^{Q_i}$ for $i = 5, \ldots, 8$.
 Recall the normal elements 
$\Omega_1:=du-aud$ and $\Omega_2:=du-bud$, which are distinct when $a\neq b$.  
Below is a table that summarizes the results of this section.

$$\begin{array}{|c|c|c|c|c|}
\hline
 & & & &\\
\text{ Group } Q_i & A^{Q_i} & \text{ Generated by }& 
\text{ Cyclotomic ?}& \text{ Reference }\\
& &\text{ Bireflections? } & &\\
\hline
 Q_1 = \langle c_\epsilon \rangle & \text{hypersurface} &  
\text{ yes } &  \text{ yes } & \text{Example }\ref{zzex8.2}\\
\hline
Q_2 = \langle d_1, c_\epsilon \rangle & \text{ cci } &  
\text{ yes } & \text{ yes }& \text{Example }\ref{zzex8.3}\\
n \text{ even } & & & &\\
\hline
Q_3 = \langle d_1, c_\epsilon \rangle & n=1 \text{ cci } & 
\text{ yes } & \text{ yes }&\text{Lemma }\ref{zzlem8.4}\\
n \text{ odd }& n \geq 3 \text{ no type of ci } & 
\text{ yes }& \text{ no }&\\
\hline
Q_4 = \langle d_1, c_{\epsilon, -} \rangle & 
\text{ no type of ci } & \text{ no }& \text{ no } & 
\text{Lemma }\ref{zzlem8.7}\\
\hline
Q_5 = \langle s_1, c_\epsilon \rangle & \text{ gci } & 
\text{ yes } & \text{ yes }& \text{Proposition }\ref{zzpro8.8}\\
n \text{ even} & & & &\\
\hline
Q_6 = \langle s, c_\epsilon \rangle & \text{ gci } & 
\text{ yes } & \text{ yes } & \text{Proposition }\ref{zzpro8.8}\\
\hline
Q_7 = \langle d_1, s, c_\epsilon \rangle & \text{ gci } & 
\text{ yes } & \text{ yes } & \text{Proposition }\ref{zzpro8.8}\\
n \text{ even} & & & & \\
\hline
Q_8 = \langle s, c_{\epsilon,-} \rangle & \text{ gci } & 
\text{ yes } & \text{ yes } & \text{Proposition }\ref{zzpro8.8}\\
n = 4k & & & &\\
\hline
\end{array}$$

\begin{center}
Table 4: Invariants of down-up algebras with $a \neq b$ under 
groups $Q_i, i = 1, \ldots, 8$,
and $A(-2,-1)$ for $Q_i, i = 5, \ldots, 8$.
\end{center}

\medskip

The following lemma follows by a straightforward induction.

\begin{lemma}
\label{zzlem8.1} Let $\Omega_1$ and $\Omega_2$ be defined as above.
Let $n$ be a positive integer. Then
$$\begin{aligned}
ud&=\frac{1}{b-a} \; (\Omega_1-\Omega_2),\\
du&=\frac{1}{b-a} \; (b\Omega_1-a\Omega_2),\\
u^n d^n&=f_{n}(\Omega_1,\Omega_2),\\
d^n u^n&=g_n(\Omega_1,\Omega_2),
\end{aligned}
$$
where
$$f_{n}(X,Y)=\frac{\prod_{i=1}^{n} (b^{-(i-1)}X-a^{-(i-1)}Y)}
{(b-a)^n}, \quad g_n(X,Y)=\frac{\prod_{i=1}^{n} (b^{i}X-a^{i}Y)}
{(b-a)^n}.$$
Furthermore, $f_n(b^n X, a^n Y)=g_n(X,Y)$.
\end{lemma}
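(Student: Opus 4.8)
The plan is to verify the two closed forms for $ud$ and $du$ by direct substitution, and then to obtain the formulas for $u^nd^n$ and $d^nu^n$ by induction on $n$, using the commutation relations of Lemma \ref{zzlem7.1} to push the outer $u$ (respectively $d$) through a polynomial in the commuting normal elements $\Omega_1,\Omega_2$. The final identity $f_n(b^nX,a^nY)=g_n(X,Y)$ is then a pure reindexing of the defining products.

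First I would record the base cases. From $\Omega_1=du-aud$ and $\Omega_2=du-bud$ one computes $\Omega_1-\Omega_2=(b-a)ud$ and $b\Omega_1-a\Omega_2=(b-a)du$, which give the first two displayed identities (here $b-a\neq0$, as we are in the case $a\neq b$). Since $f_1(X,Y)=(X-Y)/(b-a)$ and $g_1(X,Y)=(bX-aY)/(b-a)$, these base identities are exactly $ud=f_1(\Omega_1,\Omega_2)$ and $du=g_1(\Omega_1,\Omega_2)$, so the inductive statements hold for $n=1$.

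The key step is a ``conjugation'' observation that turns the noncommutative manipulation into commutative polynomial algebra. Because $\Omega_1$ and $\Omega_2$ commute (Lemma \ref{zzlem7.1}), every element of the subalgebra they generate is a genuine polynomial $P(\Omega_1,\Omega_2)$, and the relations $u\Omega_1=b^{-1}\Omega_1u$, $u\Omega_2=a^{-1}\Omega_2u$ give, monomial by monomial,
$$u\,P(\Omega_1,\Omega_2)=P(b^{-1}\Omega_1,a^{-1}\Omega_2)\,u,$$
and likewise $d\,P(\Omega_1,\Omega_2)=P(b\Omega_1,a\Omega_2)\,d$ from $d\Omega_1=b\Omega_1d$, $d\Omega_2=a\Omega_2d$. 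With this, the induction is mechanical: writing $u^{n+1}d^{n+1}=u\,f_n(\Omega_1,\Omega_2)\,d$ and applying the first formula yields $f_n(b^{-1}\Omega_1,a^{-1}\Omega_2)\,(ud)$; substituting $ud=(\Omega_1-\Omega_2)/(b-a)$ and using the elementary identity $f_n(b^{-1}X,a^{-1}Y)\cdot\tfrac{X-Y}{b-a}=f_{n+1}(X,Y)$ gives $u^{n+1}d^{n+1}=f_{n+1}(\Omega_1,\Omega_2)$. This identity is just the shift in which the $i$th factor $b^{-(i-1)}X-a^{-(i-1)}Y$ of $f_n(b^{-1}X,a^{-1}Y)$ becomes the $(i+1)$st factor of $f_{n+1}$, while $X-Y$ supplies the missing $i=1$ factor. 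The computation for $d^nu^n$ is identical after replacing $b^{-1},a^{-1}$ by $b,a$ and $ud$ by $du=(b\Omega_1-a\Omega_2)/(b-a)$, using $g_n(bX,aY)\cdot\tfrac{bX-aY}{b-a}=g_{n+1}(X,Y)$.

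Finally, the relation $f_n(b^nX,a^nY)=g_n(X,Y)$ follows by reindexing: the $i$th factor of $f_n(b^nX,a^nY)$ is $b^{-(i-1)}b^nX-a^{-(i-1)}a^nY=b^{\,n-i+1}X-a^{\,n-i+1}Y$, and as $i$ runs over $1,\dots,n$ the exponent $n-i+1$ runs over $1,\dots,n$, reproducing exactly the factors of $g_n$. Since all factors are polynomials in the commuting elements $\Omega_1,\Omega_2$, their order is immaterial and no further bookkeeping arises. I do not anticipate a genuine obstacle here; the only points requiring care are the clean statement of the conjugation identity and the index shift in the product (together with the standing assumption $a\neq b$, so that the denominators are defined).
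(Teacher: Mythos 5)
Your proof is correct and follows the same route the paper intends: the paper simply states that the lemma ``follows by a straightforward induction,'' and your argument supplies exactly that induction, with the conjugation identity $uP(\Omega_1,\Omega_2)=P(b^{-1}\Omega_1,a^{-1}\Omega_2)u$ (and its analogue for $d$) drawn from Lemma \ref{zzlem7.1} and the index-shift identities for $f_n$ and $g_n$ verified correctly. No gaps.
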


Let $Q_1\subset O$ be the cyclic group $C_n$ generated
$c_{\epsilon}$, where $\epsilon$ is a primitive $n$th root of unity,
see Lemma \ref{zzlem3.4}(1).

\begin{example}
\label{zzex8.2} The invariant subring $A^{Q_1}$ is generated by
$$X_1:= u^n, X_2:=d^n, X_3:=\Omega_1, \; {\text{and}}\;
X_4:=\Omega_2$$ subject to the relations
$$\begin{aligned}
r_1: \quad X_4 X_3&=X_3X_4,\\
r_2: \quad X_4 X_2&=a^{-n} X_2 X_4,\\
r_3: \quad X_4 X_1&=a^n X_1 X_4,\\
r_4: \quad X_3 X_2&=b^{-n} X_2 X_3,\\
r_5: \quad X_3 X_1&=b^n X_1 X_3,\\
r_6: \quad X_2 X_1&=X_1X_2+g_n(X_3,X_4)-f_n(X_3,X_4),\\
r_7: \quad X_1 X_2&=f_n(X_3,X_4).
\end{aligned}
$$
The first six relations define an iterated Ore extension that is a
connected graded, AS regular algebra $C_1:=k\langle
X_1,\cdots,X_4\rangle/(r_1,\cdots,r_6)$. The element
$$\Theta:=X_1 X_2-f_n(X_3,X_4)$$
is a regular central element in $C_1$ and $A^{Q_1}\cong C_1/(\Theta)$,
and hence $A^{Q_1}$ is a hypersurface.
\end{example}

\begin{proof}  
Using the fact that 
$f_n(b^n X, a^n Y)=g_n(X,Y)$, it follows that $\Theta$ is central.
By relation $r_7$ there is a graded surjection from $C_1/(\Theta) 
\to A^{Q_1}$.  We show
this map is a graded isomorphism by showing these algebras have the 
same Hilbert series.

The Hilbert series of $C_1/(\Theta)$ is
$$H_{C_1/(\Theta)}(t) = \frac{1-t^{2n}}{(1-t^n)^2(1-t^2)^2},$$
while using Molien's Theorem and Lemma \ref{zzlem7.3}(1), 
it follows that the Hilbert series of
$A^{Q_1}$ is
$$ \frac{1}{n} \sum_{i=0}^{n-1} \frac{1}{(1-\epsilon^i t)
(1-\epsilon^{-i} t)(1-t^2)} = h(t) \frac{1}{(1-t^2)}.$$
But $h(t)$ is the Hilbert series of $k[t_1,t_2]^{Q_1}$, and it 
follows from the description of ($A_n$) that precedes
Lemma \ref{zzlem3.1} that
$$H_{k[t_1,t_2]^{Q_1}}(t) = \frac{1-t^{2n}}{(1-t^n)^2(1-t^2)}.$$
Hence $H_{A^{Q_1}}(t)=H_{C_1/(\Theta)}(t)$. As a consequence, 
$A^{Q_1}\cong C_1/(\Theta)$.
\end{proof}

Let $Q_2\subset O$ be the group of order $4n$, which is generated 
by $d_1$ and $c_{\epsilon}$ for a primitive
$2n$th root of unity $\epsilon$, that occurred in Lemma \ref{zzlem3.4}(2).  
Let $\tilde{f}_{2n}(x_1,x_2,x_3)$ be a (commutative) polynomial in
$x_1,x_2,x_3$ such that
$$f_{2n}(X,Y)=\tilde{f}_{2n}(X^2,XY,Y^2).$$ Let
$\tilde{g}_{2n}(x_1,x_2,x_3)$ be the polynomial defined by
$$\tilde{g}_{2n}(x_1,x_2,x_3)=\tilde{f}_{2n}(b^{4n}x_1, a^{2n}b^{2n}
x_2, a^{4n}x_3).$$
Then $g_{2n}(X,Y)= \tilde{g}_{2n}(X^2,XY,Y^2)$.

\begin{example}
\label{zzex8.3} The invariant subring $A^{Q_2}$
is generated by
$$X_1:= u^{2n}, X_2:=d^{2n}, X_3:=\Omega_1^2,\;
X_4:=\Omega_1\Omega_2,\; {\text{and}}\;
X_5:=\Omega_2^2,$$
and subject to the relations
$$\begin{aligned}
r_1: \quad X_5X_4&=X_4X_5,\\
r_2: \quad X_5X_3&=X_3X_5,\\
r_3: \quad X_5X_2&=a^{-4n} X_2X_5,\\
r_4: \quad X_5X_1&=a^{4n} X_1X_5,\\
r_5: \quad X_4X_3&=X_3X_4,\\
r_6: \quad X_4X_2&=a^{-2n}b^{-2n} X_2X_4,\\
r_7: \quad X_4X_1&=a^{2n}b^{2n}X_1X_4,\\
r_8: \quad X_3X_2&=b^{-4n}X_2X_3,\\
r_9: \quad X_3X_1&=b^{4n} X_1X_3,\\
r_{10}:\quad X_2X_1&=X_1X_2+\tilde{g}_{2n}(X_3,X_4,X_5)-
\tilde{f}_{2n}(X_3,X_4,X_5),\\
r_{11}: \quad X_1X_2&= \tilde{f}_{2n}(X_3,X_4,X_5),\\
r_{12}: \quad X_3X_5&=X_4^2.
\end{aligned}
$$
The first ten relations define a connected graded,
AS regular algebra
$$C_2:=k\langle X_1,\cdots,X_5\rangle/(r_1,\cdots,r_{10})$$ 
that is an iterated Ore extension. The elements
$$\Theta_1:=X_1 X_2-\tilde{f}_{2n}(X_3,X_4,X_5)\quad \text{ and }
\Theta_2:= X_4^2-X_3X_5$$ form a sequence of regular normalizing elements
in $C_2$ and $A^{Q_2}\cong C_2/(\Theta_1,\Theta_2)$. As a
consequence, $A^{Q_2}$ is a cci.
\end{example}

\begin{proof} 
It is not hard to check that $\Theta_2$ is normal in $C_2$.  The element
$\Theta_1$ is central in $C_2$ because 
$\tilde{g}_{2n}(x_1,x_2,x_3)=\tilde{f}_{2n}(b^{4n}x_1, a^{2n}b^{2n}
x_2, a^{4n}x_3)$ implies 
$$X_i \tilde{g}_{2n}(x_1,x_2,x_3)
=  \tilde{f}_{2n}(x_1, x_2, x_3)X_i$$ 
for i=1,2,
since both sides of the equation have the same monomial summands; 
furthermore, $X_3, X_4$ and $X_5$ clearly commute with $\Theta_1$,
so that $\Theta_1$ is central in $C_2$.  Clearly $\Theta_2$ is a 
nonzero element of a domain, and
$\Theta_1$ is regular in $C_2/(\Theta_2)$ because its leading term 
does not contain $X_3,X_4$ or $X_5$.
Hence $\Theta_2, \Theta_1$ is a regular normalizing sequence.

There is a graded homomorphism from $B:=C_2/(\Theta_1,\Theta_2)$ 
onto $A^{Q_2}$, since the elements
$X_i$ satisfy the relations in $B$.  We show that this map is 
injective by showing
that $B$ and $A^{Q_2}$ have the same Hilbert series.  The Hilbert 
series of $B$ is
$$H_{B}(t) = \frac{(1-t^{4n})(1-t^8)}{(1-t^{2n})^2(1-t^4)^3} = 
\frac{(1-t^{4n})(1+t^4)}{(1-t^{2n})^2(1-t^4)^2}.$$
By Molien's Theorem and Lemma \ref{zzlem7.3}(1) the Hilbert series 
of $A^{Q_2}$ is
$$\begin{aligned}
H_{A^{Q_2}}(t) &= \frac{1}{4n} 
\biggl(\sum_{i=0}^{2n-1}\frac{1}{(1-\epsilon^i t)(1-\epsilon^{-i} t)(1-t^2)} 
+\sum_{i=0}^{2n-1}\frac{1}{(1+\epsilon^i t)(1-\epsilon^{-i} t)(1+t^2)}
\biggr)\\
&:= \frac{1}{4n}(S_1 \frac{1}{(1-t^2)} + S_2 \frac{1}{(1+t^2)}),
\end{aligned}
$$
where $S_1$ and $S_2$ are the indicated sums.  We can compute $S_1$ and 
$S_2$ from the commutative case, using the fact that $k[t_1,t_2]^{Q_2}$ 
is generated by $t_1^{2n}, t_{2}^{2n}, t_1^2 t_2^2$, which
is the hypersurface $k[Z_1,Z_2,Z_3]/(Z_1Z_2-{Z_3}^n)$, so has Hilbert series
$$H_{k[t_1,t_2]^{Q_2}}(t) = \frac{1-t^{4n}}{(1-t^{2n})^2(1-t^4)},$$
which by Molien's Theorem is
$$H_{k[t_1,t_2]^{Q_2}}(t) = \frac{1}{4n}(S_1 + S_2).$$
We can compute $S_1$ from the Hilbert series of 
$k[t_1,t_2]^{(c_\epsilon)}$, where $(c_\epsilon)$ is the cyclic group
of order $2n$, and Molien's
Theorem, and we obtain
$$S_1 := \sum_{i=0}^{2n-1}\frac{1}{(1-\epsilon^i t)(1-\epsilon^{-i} t)} 
= \frac{2n(1-t^{4n})}{(1-t^{2n})^2(1-t^2)}.$$
We compute $S_2$ using the Hilbert series of $k[t_1, t_2]^{Q_2}$, 
Molien's Theorem, and $S_1$, and we obtain
$$\begin{aligned}
S_2&:= \sum_{i=0}^{2n-1}\frac{1}{(1+\epsilon^i t)(1-\epsilon^{-i} t)}
= 4n H_{k[t_1,t_2]^{Q_2}}(t) -S_1\\
&= \frac{2n(1-t^{4n})(1-t^2)}{(1-t^{2n})^2(1-t^4)}.
\end{aligned}
$$
Then 
$$\begin{aligned}
H_{A^{Q_2}}(t) 
&= \frac{1}{4n}(S_1 \frac{1}{(1-t^2)} + S_2 \frac{1}{(1+t^2)})\\
&= \frac{1}{4n}\biggl(\frac{2n(1-t^{4n})}{(1-t^{2n})^2(1-t^2)} 
\frac{1}{(1-t^2)} + \frac{2n(1-t^{4n})(1-t^2)}{(1-t^{2n})^2(1-t^4)} 
\frac{1}{(1+t^2)}\biggr)\\
&= \frac{(1-t^{4n})(1+t^4)}{(1-t^{2n})^2(1-t^4)^2},
\end{aligned}$$
completing the proof.
\end{proof}

We next consider the case where we have an odd root of unity.

\begin{lemma}
\label{zzlem8.4}
Let $Q_3\subset O$ be the group occurring in Lemma 
{\rm{\ref{zzlem3.4}(3)}}, 
i.e. the group generated by $d_1$ and
$c_{\epsilon}$, for $\epsilon$ a primitive $n$th root of unity, 
for an odd integer $n$.
\begin{enumerate}
\item
The group $Q_3$ is a bireflection group for $A$.
\item
The invariant subring $A^{Q_3}$ is AS Gorenstein.
\item
$A^{Q_3}$ is a cci if $n=1$.
\item
$A^{Q_3}$ is not cyclotomic if $n>1$.
\end{enumerate}
\end{lemma}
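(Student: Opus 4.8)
The plan is to dispatch (1) and (2) directly from the general machinery and to reduce (3) and (4) to a single Molien-series computation. For (1), I would apply the bireflection criterion of Lemma~\ref{zzlem7.3}(3): the generator $c_\epsilon$ has $\det c_\epsilon=\epsilon\cdot\epsilon^{-1}=1$, so $\lambda\mu=1$ and it is a bireflection, while $d_1=\mathrm{diag}(-1,1)$ has eigenvalue $1$ and is thus a classical reflection, hence also a bireflection; therefore $Q_3$ is generated by bireflections. For (2), Lemma~\ref{zzlem7.3}(2) gives $\hdet g=(\det g)^2$ for diagonal $g$, so $\hdet c_\epsilon=\hdet d_1=1$ and the homological determinant of $Q_3$ is trivial; $A^{Q_3}$ is then AS Gorenstein by Proposition~\ref{zzpro0.2}(2).

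Molien's theorem together with the trace formula Lemma~\ref{zzlem7.3}(1) computes the fixed-ring Hilbert series, and since by Lemma~\ref{zzlem7.3}(4) cyclotomicity depends only on the eigenvalues, the answer is independent of $(\alpha,\beta)$. Summing $\frac{1}{(1-\lambda t)(1-\mu t)(1-\lambda\mu t^2)}$ over the $2n$ elements $d_1^{a}c_\epsilon^{\,j}$ (with eigenvalues $\lambda=(-1)^a\epsilon^j$, $\mu=\epsilon^{-j}$, so $\lambda\mu=(-1)^a$) and simplifying the two resulting sums yields
$$H_{A^{Q_3}}(t)=\frac{(1+t^4)(1+t^{2n})+t^n(1+t^2)^2}{(1-t^{2n})(1-t^4)^2}.$$
For $n=1$ this simplifies to $\dfrac{1-t^6}{(1-t)(1-t^2)(1-t^3)(1-t^4)}$, the Hilbert series of a hypersurface; to prove (3) I would then present $A^{Q_3}=A^{\langle d_1\rangle}$ explicitly as $C/(\Theta)$, where $C$ is an iterated Ore extension that is AS regular of global dimension $4$ (generators in degrees $1,2,3,4$) and $\Theta$ is a regular normal element of degree $6$, confirming the presentation by matching Hilbert series exactly as in Examples~\ref{zzex8.2} and~\ref{zzex8.3}. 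Being a hypersurface, $A^{Q_3}$ is then a cci.

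For (4), let $n>1$ be odd. I want to show that the numerator $P(t)=(1+t^4)(1+t^{2n})+t^n(1+t^2)^2$ has, in lowest terms, a root that is not a root of unity; since $(1-t^{2n})(1-t^4)^2$ has only roots of unity, any such root survives cancellation and obstructs cyclotomicity. One checks that $P$ is palindromic, so the substitution $x=t+t^{-1}$ converts $P(t)/t^{n+2}$ into
$$R(x)=(x^2-2)\,p_n(x)+x^2=p_{n+2}(x)+p_{n-2}(x)+p_2(x)+2,$$
where $p_j(x)=t^j+t^{-j}$ is the Dickson polynomial of degree $j$; a root of $P$ lies on the unit circle precisely when the corresponding root of $R$ lies in $[-2,2]$. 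Setting $x=2\cos\theta$ gives $R=2\bigl[\cos 2\theta\,(2\cos n\theta+1)+1\bigr]$, and I would show that every zero on $(0,\pi)$ forces $\cos 2\theta\le-\tfrac13$: in the complementary region the equation $\cos n\theta=\frac{1+\cos 2\theta}{-2\cos 2\theta}$ has right-hand side exceeding $1$ or is unsolvable. Since $\{\theta:\cos 2\theta\le-\tfrac13\}\cap(0,\pi)$ is an interval of length $\pi-\arccos(-\tfrac13)<\pi$, the number of zeros is only of order $0.4\,n$, far below the $n+1$ zeros that all of $R$ lying in $[-2,2]$ would require; hence $R$, and therefore $P$, has roots off the unit circle, and $A^{Q_3}$ is not cyclotomic.

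The main obstacle is this final root count: turning the heuristic ``order $0.4\,n$'' into a rigorous bound valid uniformly for all odd $n\ge 3$. The confinement of the zeros to the short range $\cos 2\theta\le-\tfrac13$ is what makes the estimate feasible, but one still must bound the number of crossings of $\cos n\theta$ against the slowly varying level $\frac{1+\cos 2\theta}{-2\cos 2\theta}$ — for instance by counting the monotone branches of $\cos n\theta$ on an interval of length $\pi-\arccos(-\tfrac13)$ — and then verify that this stays strictly below $n+1$. Everything else is routine once the Hilbert series above is in hand.
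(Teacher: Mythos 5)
Your treatment of (1), (2), and (3) matches the paper's: (1) and (2) are exactly the paper's one-line applications of Lemma \ref{zzlem7.3}(3) and of Lemma \ref{zzlem7.3}(2) with Proposition \ref{zzpro0.2}(2), and for (3) the paper does precisely what you propose, exhibiting $A^{\langle d_1\rangle}$ as $C/(\Omega)$ with $C$ an iterated Ore extension on the generators $d$, $u^2$, $u(du-aud)$, $(du-aud)^2$ of degrees $1,2,3,4$ and $\Omega=X_2X_4-b^{-1}X_3^2$ of degree $6$, confirmed by the same Hilbert series comparison. Your Hilbert series for general odd $n$ is also correct and agrees with the paper's
$$H_{A^{Q_3}}(t)=\frac{(1+t^n+t^{2n})(1+t^4)+2t^{n+2}}{(1-t^{2n})(1-t^4)^2},$$
since $(1+t^4)(1+t^{2n})+t^n(1+t^2)^2=(1+t^n+t^{2n})(1+t^4)+2t^{n+2}$; the paper reaches it through the filtration of Lemma \ref{zzlem7.2}(2) and Lemma \ref{zzlem1.6}(3) rather than by a direct Molien sum over eigenvalues, but that difference is cosmetic.

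The genuine gap is in (4). The entire content of (4) is the claim that the numerator $P(t)$ is not a product of cyclotomic polynomials, and you do not prove it: you correctly reduce to showing that $2\bigl[\cos 2\theta\,(2\cos n\theta+1)+1\bigr]$ has fewer than $2n+4$ zeros on the circle, correctly confine any zero in $(0,\pi)$ to the region $\cos 2\theta\le-\tfrac13$, and then stop at a heuristic count of ``order $0.4\,n$'' crossings, explicitly flagging the uniform bound as an unresolved obstacle. Bounding the number of solutions of $\cos n\theta=\frac{1+\cos 2\theta}{-2\cos 2\theta}$ branch by branch is genuinely delicate (the level is not constant, and near the turning points of $\cos n\theta$ its derivative need not dominate that of the level), so as written the argument is incomplete at its decisive step. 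The paper closes this step differently and more cheaply (Lemma \ref{zzlem8.6}(1)): writing $g_n(s)=\cos(4\pi s)\cos(2\pi ns)+\cos^2(2\pi s)$, one has $g_n(s)\ge\sin^2(2\pi s)>0$ on $(0,1/8]$, yet $g_n$ has a local minimum at some $s_0\in(1/4n,1/2n)\subset(0,1/8]$ for $n\ge 4$; a real trigonometric polynomial of degree $n+2$ all of whose $2n+4$ zeros are real cannot have a strictly positive local minimum (it would force too many critical points), so some root of $P$ lies off the unit circle, and the remaining small case $n=3$ is checked directly. To complete your version you would need either to make the crossing count rigorous uniformly for all odd $n\ge 3$ or to substitute an argument of this positive-local-minimum type.
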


\begin{proof} (1) Both $d_1$ and $c_{\epsilon}$ are
bireflections of $A$ by Lemma \ref{zzlem7.3}(3), and so $Q_3$ 
is a bireflection group for $A$.

(2) This follows from Proposition \ref{zzpro0.2}(2), as $(\det g)^2=1$ for all
$g\in Q_3$ and hence $\hdet g=1$ by Lemma \ref{zzlem7.3}(2).

(3) When $n=1$, $Q_3$ is the cyclic group of order 2 generated by $d_1$, and
$A^{Q_3}$ is generated by
$$X_1:=d, \; X_2=u^2, \; X_3:=u(du-a ud), \; {\text{and}}\; X_4:=(du-aud)^2$$
subject to the relations
$$\begin{aligned}
r_1: \quad X_1 X_2&= a^2 X_2X_1+(a+b)X_3,\\
r_2: \quad X_1 X_3&=ab X_3X_1+X_4,\\
r_3: \quad X_1 X_4&= b^2 X_4X_1,\\
r_4: \quad X_2 X_3&=b^{-2} X_3X_2,\\
r_5: \quad X_2 X_4&=b^{-4} X_4 X_2,\\
r_6: \quad X_3 X_4&= b^{-2} X_4X_3,\\
r_7: \quad X_2X_4 &=b^{-1} X_3^2.
\end{aligned}
$$
The first six relations define an AS regular algebra $C$.
Let  $\Omega:=X_2X_4-b^{-1} X_3^2$; it is not hard to check that 
$\Omega$ is normal in $C$,  and that
there is a graded homomorphism from $C/(\Omega)$ onto $A^{Q_3}$.  By 
Molien's Theorem,
the Hilbert series of $A^{Q_3}$ is
$$ \frac{1}{2} \biggl( \frac{1}{(1-t)^2 (1-t^2)} + 
\frac{1}{(1-t)(1+t)(1+t^2)}\biggr) =  
\frac{1-t^6}{(1-t)(1-t^2)(1-t^3)(1-t^4)},$$
which is the Hilbert series of $C/(\Omega)$.  Hence $A^{Q_3}$
is isomorphic to $C/(\Omega)$, and so $A^{Q_3}$
is a cci.

(4) Assume $n>1$. Since
$Q_3\subset O$, $Q_3$ preserves the filtration $F$ defined in Lemma
\ref{zzlem7.2}(2). By Proposition \ref{zzpro6.8} we need to 
show only that $(\gr_F A)^{\Phi(Q_3)}$ is not cyclotomic. By Lemma
\ref{zzlem7.2}(2), $\gr_F A$ is isomorphic to skew polynomial ring
$$B:=k\langle t_1, t_2, t_3\rangle/(t_2t_3=b t_3 t_2, b t_1t_3=t_3 t_1,
t_2 t_1=at_1 t_2),$$ where $t_1:=\widehat{u}, t_2:=\widehat{d}$ and
$t_3:=\widehat{\Omega}$. Hence $\deg t_1=\deg t_2=1, \deg t_3=2$, and
the group $\Phi(Q_3)\subset \Aut(B)$ is generated by $\widehat{d_1}:
t_1\to -t_1, t_2\to t_2, t_3\to -t_3$ and $\widehat{c_{\epsilon}}:
t_1\to \epsilon t_1, t_2\to \epsilon^{-1}t_2, t_3\to t_3$. Let $G$ be
the subgroup of $\Phi(Q_3)$ generated by $\widehat{c_{\epsilon}}$.
Then $B^G$ is isomorphic to the algebra
$S:=(k_{p_{ij}}[X_1,X_2,X_3]/(f))[t_3;\sigma]$, where $X_1:=t_1^n,
X_2:=t_2^n, X_3:=t_1t_2$ and $f:=X_2X_1-a^{{n \choose 2}}X_3^n$ 
(for some appropriate $p_{ij}$'s and $\sigma$, which are not essential
for the argument). The automorphism $\widehat{d_1}$ induces an
automorphism $h\in \Aut(S)$, which is determined by
$$h: X_1\to -X_1, \; X_2\to X_2, \; X_3\to -X_3, \; t_3\to -t_3.$$
By Lemma \ref{zzlem1.6}(3), the trace of $h$ is
$$Tr_S(h, t)=\frac{1+t^{2n}}{(1+t^n)(1-t^n)(1+t^2)(1+t^2)},$$
and the Hilbert series of $S$ is
$$H_S(t)=Tr_S(Id_S, t)=\frac{1-t^{2n}}{(1-t^n)(1-t^n)(1-t^2)(1-t^2)}.$$
Since $h$ is an involution, by Molien's theorem,
$$\begin{aligned}
H_{A^{Q_3}}(t)&=H_{B^{\Phi(Q_3)}}(t)=H_{S^h}(t)\\
&=\frac{1}{2}\biggl(Tr_S(h, t)+H_S(t)\biggr)\\
&=\frac{(1+t^n+t^{2n})(1+t^4)+2t^{n+2}}{(1-t^{2n})(1-t^4)^2}.
\end{aligned}
$$
By the following lemma, Lemma \ref{zzlem8.6}(a),
$(1+t^n+t^{2n})(1+t^4)+2t^{n+2}$ is not a product of cyclotomic
polynomials. Therefore $A^{Q_3}$ is not cyclotomic.
\end{proof}

\begin{remark}
\label{zzrem8.5}  In the classical case there
are groups generated by bireflections, where the fixed ring is not
a complete intersection.
Let $G$ be the finite subgroup of $SL_3(k)$ generated by
$$\begin{pmatrix} \epsilon &0&0\\0&\epsilon^{-1}&0\\0&0&1\end{pmatrix}
\quad {\text{and}}\quad
\begin{pmatrix} -1 &0&0\\0&1&0\\0&0&-1\end{pmatrix},$$
where $n$ is a primitive $n$th root of unity for odd integer
$n\geq 3$. Both matrices are
classical bireflections, so that $G$ is a bireflection group for
$k[t_1,t_2,t_3]$. A Hilbert series argument similar to the one
in the proof of the previous lemma  shows that $k[t_1,t_2,t_3]^G$ is
not cyclotomic, whence, not a complete intersection.
(In this case the numerator is $f(t)= t^{2n} + t^{n+2} + t^{n} + 1$ and 
$t= -1$ is a root;  further $f'(-1)=2$, so
$f(t)$ is increasing at $t=-1$.  Since 
$\lim_{t\to -\infty} f(t) = \infty$, there must be another real root smaller
than $-1$ (See also \cite{WR, Wa}).
\end{remark}

\begin{lemma}
\label{zzlem8.6} Each of the following polynomials is not a product of
cyclotomic polynomials.
\begin{enumerate}
\item
$q(t)=(1+t^n+t^{2n})(1+t^4)+2t^{n+2}$ for any $n>1$.
\item
$q(t)=(1+t^{2n})(1+t^4)+4t^{n+2}$ for any $n\geq 1$.
\end{enumerate}
\end{lemma}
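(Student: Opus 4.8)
The plan is to invoke Kronecker's theorem: a monic polynomial in $\mathbb{Z}[t]$ with nonzero constant term is a product of cyclotomic polynomials if and only if all of its complex roots lie on the unit circle. In both cases $q(t)$ is monic of degree $2n+4$, has integer coefficients, and has constant term $1$; hence it suffices to exhibit a root with $|t|\neq 1$. I would then exploit that each $q$ is palindromic, i.e. $t^{2n+4}q(1/t)=q(t)$. Setting $N=n+2$ and $y=t+t^{-1}$, the substitution $t^k+t^{-k}=s_k(y)$ (with $s_0=2$, $s_1=y$, $s_{k+1}=ys_k-s_{k-1}$, so $s_k\in\mathbb{Z}[y]$) produces a monic $Q\in\mathbb{Z}[y]$ of degree $N$ with $q(t)=t^{N}Q(y)$. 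Using $s_as_b=s_{a+b}+s_{|a-b|}$ one gets the compact forms
\[
Q_1(y)=s_n(y)(y^2-2)+y^2,\qquad Q_2(y)=s_n(y)(y^2-2)+4 .
\]
Because $t$ lies on the unit circle exactly when $y=t+t^{-1}\in[-2,2]$, the roots of $q$ on the unit circle correspond bijectively to the roots of $Q$ in $[-2,2]$. So the whole statement reduces to showing that $Q$ has a root \emph{outside} $[-2,2]$.

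For part (2) I would argue by contradiction. Writing $y=2\cos\theta$ gives $Q_2=4(\cos n\theta\cos2\theta+1)\ge 0$ on $[-2,2]$, while $Q_2(\pm2)=8\ne0$ and $Q_2(-2)=4((-1)^n+1)$. If every root of $Q_2$ lay in $[-2,2]$, then each interior root would have even multiplicity (the graph touches $0$ from above) and the only possible boundary root is at $y=-2$; by Gauss's Lemma the forced factorization has integer factors, so $Q_2=T(y)^2$ when $n$ is even and $Q_2=(y+2)T(y)^2$ when $n$ is odd, with $T\in\mathbb{Z}[y]$ monic. For $n$ odd this gives $Q_2(0)=2T(0)^2$, but a direct evaluation gives $Q_2(0)=4$, forcing $T(0)^2=2$, impossible. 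For $n$ even, matching the top coefficients of $Q_2=T^2$ (the $t^{n+1}$-coefficient vanishes, forcing the subleading coefficient of $T$ to be $0$, etc.) forces a coefficient of $T$ to equal $\tfrac{n^2-2n-4}{8}=\tfrac{(n/2)^2-(n/2)-1}{2}$, which is never an integer; the few degenerate small cases ($n=2$, where $Q_2(0)=8$ is not a square, and $n=4$) are checked directly. This contradiction proves (2).

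Part (1) runs in the same framework, reducing to showing that $Q_1=(y^2-2)(s_n(y)+1)+2$ has a root off $[-2,2]$. \textbf{The main obstacle is exactly here.} Unlike $Q_2$, the polynomial $Q_1$ is \emph{not} sign-definite on $[-2,2]$ (for instance $Q_1(0)=-4$ when $4\mid n$), so the ``all roots in $[-2,2]$ forces a perfect square'' mechanism no longer applies; moreover $q_1$ has no real root off the unit circle (one checks $q_1\ge0$ on $\mathbb{R}$, with $t=-1$ a double root when $n$ is odd), so a Remark~\ref{zzrem8.5}-style real-root argument is unavailable and one must detect a genuinely non-real root.

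To get past this I would first divide out the finitely many forced cyclotomic factors of $q_1$ — for odd $n$ these include $(t+1)^2(t^2+1)$, since $t=-1$ is a double root and $t=\pm i$ are roots — and apply the parity/perfect-square argument to the (still palindromic) quotient; alternatively I would bound the number of zeros on $[0,\pi]$ of the cosine polynomial $\cos2\theta(2\cos n\theta+1)+1$ and show it is strictly less than $n+2$, which again forces a root of $Q_1$ outside $[-2,2]$. A third, possibly cleaner, route is to transfer information from the settled case via the identity $q_1=q_2+t^{n}(t^2-1)^2$, equivalently $Q_1=Q_2+(y^2-4)$: on $[-2,2]$ this gives $Q_1\le Q_2$, and one can try to leverage the roots of $Q_2$ (already understood) to locate a sign pattern of $Q_1$ incompatible with all roots lying in $[-2,2]$. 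Making any of these uniform in $n$ is the delicate point, and is where I expect the real work to lie.
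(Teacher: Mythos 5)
Your reduction via Kronecker's theorem and the substitution $y=t+t^{-1}$ is sound, and your part (2) is a correct, if heavier, alternative to the paper's argument: the paper simply notes that $\cos(4\pi s)\cos(2\pi ns)+1\ge 0$ can vanish only when $\cos(4\pi s)=\pm 1$ and $\cos(2\pi ns)=\mp 1$, so the only candidate roots of unity are $t=-1$ (for $n$ odd) or $t=\pm i$ (for $4\mid n$), and $q$ is visibly not a constant times a power of $t+1$ or of $t^2+1$. Your even-multiplicity/Gauss's-lemma route reaches the same conclusion but pays for it with the coefficient computations and the small-case checks you list.

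The genuine gap is part (1): you do not prove it, you only name three candidate strategies and concede that making any of them uniform in $n$ ``is where the real work lies.'' None is carried out, and the first (stripping off forced cyclotomic factors and reapplying the sign-definiteness mechanism) has no reason to terminate, since the quotient need not be sign-definite either. The paper closes part (1) precisely along your second route. Writing $q(e^{2\pi is})=4e^{2\pi i(n+2)s}g_n(s)$ with $g_n(s)=\cos(4\pi s)\cos(2\pi ns)+\cos^2(2\pi s)$ (your $\tfrac14 Q_1(2\cos 2\pi s)$), one has $g_n(s)\ge \sin^2(2\pi s)>0$ on $(0,1/8]$, so $Q_1$ has no root in $[\sqrt{2},2]$; if all $n+2$ roots of $Q_1$ lay in $[-2,\sqrt{2})$, Rolle's theorem would consume all $n+1$ roots of $Q_1'$ there. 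But computing $g_n'(1/(4n))<0$ and $g_n'(1/(2n))>0$ produces a critical point $s_0\in(1/(4n),1/(2n))$, i.e.\ an extra critical point of $Q_1$ in $(\sqrt2,2)$ where $Q_1>0$ --- a contradiction for $n\ge 4$, with $n=2,3$ checked by hand. To complete your proof you would need to supply a zero/critical-point count of exactly this kind (or otherwise locate a root of $Q_1$ off $[-2,2]$) valid for all $n>1$; as it stands, part (1) is asserted but not established.
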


\begin{proof} (1)
Set $t=e(s) := e^{2\pi i s}$.  Then $f_n(t)$ is cyclotomic if and
only if $f_n(e(s))$ has $2n+4$ real roots in $[0,1)$.  Let
$$\begin{aligned}
g_n(s) &= \frac{e(-(n+2)s)}{4} f_n(e(s))\\
&=\cos(4 \pi s) \cos(2 \pi ns) + \cos^2(2 \pi s)
\end{aligned}
$$
(using the addition formula for $\cos(\alpha + \beta)$). Then for $0
\leq s \leq 1/8$ we have
$$\cos(4 \pi s) (\cos(2 \pi ns)+1) \geq 0,$$
and consequently
$$g_n(s) \geq \sin^2(2 \pi s) >0$$
for $0 < s \leq 1/8$.  But, for $n > 2$, we compute
$$g'(1/4n) = -2 \pi ( n \cos(\pi/n) + \sin(\pi/n)) < 0,$$
and
$$g'_n(1/2n) = 2 \pi \sin(2 \pi/n)> 0,$$
so $g_n(s)$ has a local minimum at a point $s_0 \in (1/4n,1/2n)$,
and $g_n(s_0) > 0$ if $n \geq 4$.  Thus $f_n(t)$ has a least one
root off the unit circle for $n \geq 4$, and one can check the cases
$n=2$ and $n=3$.

(2) As in the previous argument, let $t=e(s) := e^{2\pi i s}$.  Then
$f_n(t)$ is cyclotomic if and only if $f_n(e(s))$ has $2n+4$ real
roots in $[0,1)$.  Let
$$\begin{aligned}
g_n(s) &= \frac{e(-(n+2)s)}{4} f_n(e(s))\\
&=\cos(4 \pi s) \cos(2 \pi ns) + 1.
\end{aligned}$$
Hence $s$ is a root of $g_n(s)$ if and only if $s$ is a root of
$f_n(e(s))$, which happens in the following two cases:
either $\cos(4 \pi s) = 1$ and $\cos(2 \pi ns) = -1$, or
$\cos(4 \pi s) = -1$ and $\cos(2 \pi ns) = 1$.

\noindent
Case a: $\cos(4 \pi s) = 1$ implies that $s= 0$ or $1/2$ since $s
\in [0,1)$, and $\cos(2 \pi ns) = -1$ implies that
$s=1/2$ (and this equation holds only if $n$ is odd).  In this case,
the only possible cyclotomic root of $f_n(t)$ can be $e^{\pi i} = -1$.

\noindent
Case b: $\cos(2 \pi ns) = -1$ implies that $s= 1/4$ or $3/4$, and in
either case $\cos(2 \pi ns) = 1$ implies that $n$ is a multiple of
$4$, and the only cyclotomic roots of $f_n(t)$ are $\pm i$.

\noindent
Hence if $n$ is odd the only cyclotomic factor can be $t+1$, and for
any $n \geq 0$, $f_n(t) \neq k (t+1)^m$ for any $k \in k$.
When $n$ is even (and a multiple of 4) the only possible cyclotomic
factor is $t^2+1$,
 but for any $n >0$ $f_n(t) \neq k (t^2+1)^m$ for any $k \in k$.
\end{proof}

Similar to Lemma \ref{zzlem8.4} we have the following.

\begin{lemma}
\label{zzlem8.7}
Let $Q_4\subset O$ be the group generated by  $c_{\epsilon, -}$, 
for a primitive
$4n$th root of unity $\epsilon$ for any $n\geq 1$, which occurred 
in Lemma \ref{zzlem3.4}(4).
The invariant subring $A^{Q_4}$ is AS Gorenstein, but
not cyclotomic. The group $Q_4$ is not a bireflection group.
\end{lemma}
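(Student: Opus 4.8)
The plan is to treat the three assertions in turn, leaning on the tools already assembled for Lemma \ref{zzlem8.4}, since $Q_4\subset O$ so the group preserves the filtration $F$ of Lemma \ref{zzlem7.2}(2). For AS Gorensteinness I would observe that $\det(c_{\epsilon,-}^k)=(-1)^k=\pm1$, so Lemma \ref{zzlem7.3}(2) gives $\hdet(c_{\epsilon,-}^k)=(\det c_{\epsilon,-}^k)^2=1$; thus the $Q_4$-action has trivial homological determinant and Proposition \ref{zzpro0.2}(2) yields that $A^{Q_4}$ is AS Gorenstein.

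For the bireflection statement I would apply Lemma \ref{zzlem7.3}(3): the element $c_{\epsilon,-}^k$, with eigenvalues $(-\epsilon)^k$ and $\epsilon^{-k}$, is a bireflection iff $(-1)^k=(-\epsilon)^k\epsilon^{-k}=1$ or one eigenvalue equals $1$. For even $k$ the product is $1$, so these are bireflections; for odd $k$ the product is $-1$, and neither $\epsilon^{-k}=1$ (which needs $4n\mid k$) nor $(-\epsilon)^k=1$ (which needs $\epsilon^k=\epsilon^{2n}$, i.e. $k\equiv 2n$ with $k$ odd and $2n$ even) can occur. Hence the bireflections are exactly the even powers, generating $\langle c_{\epsilon,-}^2\rangle=\langle c_{\epsilon^2}\rangle\cong C_{2n}$, a proper subgroup of $C_{4n}=Q_4$, so $Q_4$ is not a bireflection group.

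The substantive part is showing $A^{Q_4}$ is not cyclotomic, which I would carry out exactly along the lines of Lemma \ref{zzlem8.4}(4). By Proposition \ref{zzpro6.8}(2) it suffices to show $(\gr_F A)^{\Phi(Q_4)}$ is not cyclotomic. Writing $B=\gr_F A=k\langle t_1,t_2,t_3\rangle/(\cdots)$ with $t_1=\widehat u,\ t_2=\widehat d,\ t_3=\widehat\Omega$ (so $\deg t_1=\deg t_2=1$, $\deg t_3=2$), the induced generator acts by $t_1\to-\epsilon t_1,\ t_2\to\epsilon^{-1}t_2,\ t_3\to-t_3$. Taking invariants first under $G_0=\langle\Phi(c_{\epsilon,-}^2)\rangle\cong C_{2n}$ yields $S:=B^{G_0}\cong (k_{p_{ij}}[X_1,X_2,X_3]/(f))[t_3;\sigma]$ with $X_1=t_1^{2n},\ X_2=t_2^{2n},\ X_3=t_1t_2$ and $f=X_2X_1-a^{\binom{2n}{2}}X_3^{2n}$, and the residual involution is $h\colon X_1\to-X_1,\ X_2\to-X_2,\ X_3\to-X_3,\ t_3\to-t_3$. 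The point where this case diverges from Lemma \ref{zzlem8.4}(4) is that here every exponent in $f$ is even, so $h$ \emph{fixes} $f$ rather than sending it to $-f$. Lemma \ref{zzlem1.6}(3) then gives
$$Tr_S(h,t)=\frac{1-t^{4n}}{(1+t^{2n})^2(1+t^2)^2},\qquad H_S(t)=\frac{1-t^{4n}}{(1-t^{2n})^2(1-t^2)^2},$$
and Molien's theorem $H_{S^h}=\tfrac12\bigl(H_S+Tr_S(h)\bigr)$ should simplify to
$$H_{A^{Q_4}}(t)=H_{S^h}(t)=\frac{(1+t^{4n})(1+t^4)+4t^{2n+2}}{(1-t^{4n})(1-t^4)^2}.$$
The numerator is precisely the polynomial of Lemma \ref{zzlem8.6}(2) after the substitution $n\mapsto 2n$; since the denominator is a product of cyclotomic polynomials, no cancellation can remove the non-cyclotomic root guaranteed by that lemma, so $A^{Q_4}$ fails to be cyclotomic.

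I expect the only real obstacle to be the bookkeeping in the final simplification: keeping the sign structure straight so that $h(f)=f$ (the feature that distinguishes this case from the odd case of Lemma \ref{zzlem8.4}(4)), and then verifying that combining $H_S$ and $Tr_S(h)$ indeed collapses to the stated numerator, which must be recognized as Lemma \ref{zzlem8.6}(2) with parameter $2n\ge 2$. The reduction from $A$ to $B$ to $S$, by contrast, is a direct transcription of the filtered-to-graded machinery of Section \ref{zzsec6} together with the invariant computations already performed for the cyclic case.
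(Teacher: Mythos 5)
Your proposal is correct and follows essentially the same route as the paper: trivial homological determinant via $(\det g)^2=1$ for AS Gorensteinness, the eigenvalue/trace criterion to see that only the even powers of $c_{\epsilon,-}$ are bireflections, and the filtration reduction to $S=(k_{p_{ij}}[X_1,X_2,X_3]/(f))[t_3;\sigma]$ followed by Lemma \ref{zzlem1.6}(3), Molien's theorem, and Lemma \ref{zzlem8.6}(2) to rule out cyclotomicity. The computed Hilbert series agrees with the paper's after cancelling the common factor $(1-t^{4n})$, and your extra remark that cancellation against a cyclotomic denominator cannot remove the non-root-of-unity root is a correct (and slightly more explicit) justification of the final step.
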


\begin{proof} Every element in $Q_4$ is of the form $c_{\epsilon,
-}^i$ for $i=0,1,\cdots, 4n-1$. It is easy to see that the trace of
$c_{\epsilon, -}^i$ (as an automorphism of $A$) is
$$Tr_A(c_{\epsilon, -}^i, t)=\frac{1}{(1-(-1)^i\epsilon^i
t)(1-\epsilon^{-i}t)(1-(-1)^i t^2)}.$$ Hence $c_{\epsilon,-}^i$ is a
bireflection if and only if $i$ is even, and this implies that $Q_4$ is
not a bireflection group for $A$.

By Lemma \ref{zzlem1.3}, one sees that  $\hdet c_{\epsilon,-}^i=1$ for
all $i$. Therefore $A^{Q_4}$ is AS Gorenstein [Proposition \ref{zzpro0.2}(2)].
It remains to show that $A^{Q_4}$ is not cyclotomic.

Since $Q_4\subset O$, $Q_4$ preserves the filtration $F$ defined in
Lemma \ref{zzlem7.2}(2). By Proposition \ref{zzpro6.8}(2) we need
to show only that $(\gr_F A)^{\Phi(Q_4)}$ is not cyclotomic. By Lemma
\ref{zzlem7.2}(2), $\gr_FA$ is isomorphic to skew polynomial ring
$$B:=k\langle t_1, t_2, t_3\rangle/(t_2t_3=b t_3 t_2, b t_1t_3=t_3 t_1,
t_2 t_1=at_1 t_2)$$ where $t_1:=\widehat{u}, t_2:=\widehat{d}$ and
$t_3:=\widehat{\Omega}$. Hence $\deg t_1=\deg t_2=1, \deg t_3=2$ and
the group $\Phi(Q_4)\subset \Aut(B)$ is generated
$\phi:=\Phi(c_{\epsilon,-}):t_1\to -\epsilon t_1, t_2\to
\epsilon^{-1}t_2, t_3\to -t_3$, where $\epsilon$ is a primitive
$4n$th root of unity. Let $G$ be the subgroup of $\Phi(Q_4)$
generated by $\phi^2$. Then $B^G$ is isomorphic to the algebra
$S:=(k_{p_{ij}}[X_1,X_2,X_3]/(f))[t_3;\sigma]$, where $X_1:=t_1^{2n},
X_2:=t_2^{2n} X_3:=t_1t_2$, where $p_{12}:=a^{4n^2}$, $p_{13}:=a^{2n}$
and $p_{23}:=a^{-2n}$, and where $f:=X_1X_2-a^{-{2n \choose
2}}X_3^{2n}$. The automorphism $\phi\in \Aut(B)$ induces an
involution $h\in \Aut(S)$, which is determined by
$$h: X_1\to -X_1, \quad X_2\to -X_2, \quad X_3\to -X_3, \quad t_3\to -t_3.$$
Note that $h(f)=f$. By Lemma \ref{zzlem1.6}(3), the trace of $h$ is
$$Tr_S(h, t)=\frac{1-t^{4n}}{(1+t^{2n})(1+t^{2n})(1+t^2)(1+t^2)},$$
and the Hilbert series of $S$ is
$$H_S(t)=Tr_S(Id_S, t)=\frac{1-t^{4n}}{(1-t^{2n})(1-t^{2n})(1-t^2)(1-t^2)}.$$
Since $h$ is an involution, by Molien's theorem,
$$\begin{aligned}
H_{A^{Q_4}}(t)&=H_{B^{\Phi(Q_4)}}(t)=H_{S^h}(t)\\
&=\frac{1}{2}\biggl(Tr_S(h, t)+H_S(t)\biggr)\\
&=\frac{(1-t^{4n})
[(1+t^{4n})(1+t^4)+4t^{2n+2}]}{(1-t^{4n})^2(1-t^4)^2}.
\end{aligned}
$$
By the previous lemma, Lemma \ref{zzlem8.6}(b),
$(1+t^{4n})(1+t^4)+4t^{2n+2}$ is not a product of cyclotomic
polynomials. Therefore $A^{Q_4}$ is not cyclotomic.
\end{proof}

\begin{proposition}
\label{zzpro8.8} Suppose that $\beta=-1$ and $\alpha\neq 2$. Let $H$
be any of the groups $Q_5$, $Q_6$, $Q_7$ or $Q_8$ in Lemma \ref{zzlem3.4}. 
Then the invariant subring $A^{H}$ is a gci.
\end{proposition}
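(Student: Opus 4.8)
The plan is to prove the stronger assertion that each $A^{H}$ is a complete intersection of type GK, reducing every case to the explicit complete intersections of Example~\ref{zzex5.9}. First I would isolate inside each group the cyclic group $N$ of diagonal automorphisms that fix \emph{both} $\Omega_1$ and $\Omega_2$: take $N=\langle c_\epsilon\rangle$ for $H\in\{Q_5,Q_6,Q_7\}$ and $N=\langle c_{\epsilon,-}^2\rangle=\langle c_{\epsilon^2}\rangle$ for $H=Q_8$. For a primitive $m$th root $\epsilon$, Example~\ref{zzex8.2} presents $A^{N}$ as the hypersurface $C/(\Theta)$ with $C=k\langle X_1,X_2,X_3,X_4\rangle/(r_1,\dots,r_6)$ AS regular of dimension $4$, where $X_1=u^{m},X_2=d^{m},X_3=\Omega_1,X_4=\Omega_2$ and $\Theta=X_1X_2-f_m(X_3,X_4)$ is central. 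The residual group $\overline H:=H/N$ is $C_2$ (the swap $s$ or $s_1$) for $Q_5,Q_6$, and $C_2\times C_2$ for $Q_7$ (generated by $\overline{d_1},\overline{s}$) and for $Q_8$ (generated by $\overline{c_{\epsilon,-}},\overline{s}$). Since $\overline H$ acts on $X_1,\dots,X_4$ by signed permutations it lifts to $\Aut(C)$ and stabilises the ideal $(\Theta)$, so Lemma~\ref{zzlem5.4}(2) gives $A^{H}=(A^{N})^{\overline H}\cong C^{\overline H}/(\Theta)$; by Lemma~\ref{zzlem5.4}(3) it then suffices to show that $C^{\overline H}$ is a gci.

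The key step is to recognise $C$ as a filtered deformation of the twisted tensor product $T_q$ of Example~\ref{zzex5.9}. Filtering $C$ by the $\{X_1,X_2\}$-length of a monomial (weight $1$ on $X_1,X_2$, weight $0$ on $X_3,X_4$) degenerates the single inhomogeneous relation $r_6\colon X_2X_1=X_1X_2+(g_m-f_m)(X_3,X_4)$ to the commutation relation $X_2X_1=X_1X_2$, while leaving the skew relations $r_1$--$r_5$ unchanged. Matching $x_1,x_2\leftrightarrow X_1,X_2$ and $y_1,y_2\leftrightarrow X_3,X_4$, and using $ab=1$ so that $b^{m}=a^{-m}$, this exhibits $\gr C\cong T_q$ with $q=b^{m}$; in the degenerate parameter $A(-2,-1)$ (where $a=b=-1$) one instead filters through the single-$\Omega$ presentation of Lemma~\ref{zzlem7.2}(2) and lands in the $q=\pm1$ form of $T_q$. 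This filtration is $\overline H$-stable, and on $\gr C=T_q$ the swap $s$ (or $s_1$) induces $\zeta_2$, while $d_1$ induces $\zeta_1$, since $d_1$ fixes $u^{m},d^{m}$ and negates $\Omega_1,\Omega_2$.

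For $Q_5,Q_6,Q_7$ the induced group on $T_q$ is thus $\langle\zeta_2\rangle$ or $\langle\zeta_1,\zeta_2\rangle$, so $(\gr C)^{\overline H}$ is $T_q^{\zeta_2}$ or $T_q^{\langle\zeta_1,\zeta_2\rangle}$, each a cci and hence a gci by Example~\ref{zzex5.9}. I would then apply Proposition~\ref{zzpro0.4}(1) to the pair $(C,\overline H)$: $C$ is AS regular, $\gr C\cong T_q$ is AS regular and therefore AS Cohen--Macaulay, the filtration is $\overline H$-stable with noetherian Rees ring, and $(\gr C)^{\overline H}$ is a gci; the proposition yields that $C^{\overline H}$ is a gci. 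Lemma~\ref{zzlem5.4}(3) then gives that $A^{H}=C^{\overline H}/(\Theta)$ is a gci, settling these three cases.

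The hard part is $Q_8$. Here $\overline{c_{\epsilon,-}}$ acts on all four generators of $C$ by $-1$, so on $T_q$ the relevant group is $\langle-\mathbb{I},\zeta_2\rangle$ rather than $\langle\zeta_1,\zeta_2\rangle$, and Example~\ref{zzex5.9}(iii) does not apply verbatim. I expect the parity condition recorded in Table~4 ($n=4k$) to be exactly what reconciles the two: with $\epsilon$ of order $4n$ the sign on $\{u^{m},d^{m}\}$ can be absorbed into an element of the chosen $C_{2n}$, so that modulo $N$ the induced $C_2\times C_2$ is conjugate to $\langle\zeta_1,\zeta_2\rangle$; alternatively one invokes Lemma~\ref{zzlem5.5}, whose hypotheses (an involutory reflection negating a single normal generator, together with $h(v)=-v$ on an adjoined Ore variable) are tailored to produce a cci from precisely this kind of sign action. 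The remaining routine points are to confirm that $(\Theta)$ is indeed $\overline H$-stable (so that Lemma~\ref{zzlem5.4} applies) and to carry out the $a=b$ specialisation $A(-2,-1)$ against the $q=\pm1$ instance of Example~\ref{zzex5.9}.
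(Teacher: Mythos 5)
Your overall strategy -- take invariants under the normal cyclic diagonal subgroup $N$ first, present $A^{N}$ as the hypersurface $C/(\Theta)$ of Example~\ref{zzex8.2}, then filter $C$ down to the algebra $T_q$ of Example~\ref{zzex5.9} and apply Proposition~\ref{zzpro0.4}(1) together with Lemma~\ref{zzlem5.4} -- is a genuine reordering of the paper's argument, which instead filters $A$ itself first (Lemma~\ref{zzlem7.2}(3), landing in $(k[t_1,t_2]/(t_1t_2))\langle\Omega_1,\Omega_2\rangle$) and only then takes invariants in two stages. For $Q_5$, $Q_6$, $Q_7$ with $\alpha\neq -2$ your route would plausibly go through, and it has the advantage of working directly with $A^{N}$ rather than with $\gr_F A$. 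But as written the proof is incomplete in two of the cases and has one unverified technical step throughout.

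First, the $Q_8$ case is not actually resolved. The residual group acting on $T_q$ is $\langle\zeta_2,\tau\rangle$, where $\tau$ negates all four generators, and your proposed repair fails: the diagonal elements of $Q_8$ are exactly the powers $c_{\epsilon,-}^{\,j}$, whose even powers lie in $N=\langle c_{\epsilon^2}\rangle$ and whose odd powers carry the $-1$ twist on the first coordinate, so no element of $Q_8$ induces $x_i\mapsto -x_i$, $\Omega_j\mapsto\Omega_j$; the sign on $u^{2n},d^{2n}$ cannot be absorbed into $N$, and $\langle\zeta_2,\tau\rangle$ is not conjugate to $\langle\zeta_1,\zeta_2\rangle$. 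Lemma~\ref{zzlem5.5} does not apply either, since $T_q$ is not presented as $B[v;\sigma]$ with the group acting as that lemma requires. The paper needs a separate multi-step argument here: pass to $T_q^{\zeta_2}\cong C/(\Theta_1,\Theta_2)$, identify the induced involution $\eta_2$, and reduce $C^{\eta_2}$ by a chain of further filtrations down to a Veronese subring. Second, the case $\alpha=-2$ (i.e.\ $A(-2,-1)$) is not handled: there $a=b=-1$, so $\Omega_1=\Omega_2$ and the four-generator presentation of Example~\ref{zzex8.2}, together with the $f_m,g_m$ machinery, is unavailable; the paper treats this case separately via $\gr_F A\cong k_{-1}[t_1,t_2,t_3]$, using Lemma~\ref{zzlem5.5}, Theorem~\ref{zzthm0.1}, and commutative computations, and a one-line appeal to ``the $q=\pm1$ instance of Example~\ref{zzex5.9}'' does not substitute for this. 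Finally, a smaller but real gap: invoking Lemma~\ref{zzlem5.4}(2) for $C/(\Theta)$ requires the residual automorphisms to lift to $\Aut(C)$ and to fix $\Theta$ \emph{exactly}, not merely to stabilise the ideal $(\Theta)$. Since $r_6$ and $\Theta$ lie in the same degree and both involve $X_1X_2$, neither the existence of the lift nor the exact invariance of $\Theta$ follows from the ``signed permutation'' description alone; both must be checked against the identities relating $f_m$ and $g_m$ (and against the scalars $\pm a^{\pm1}$ appearing in the swap of $\Omega_1,\Omega_2$).
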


\begin{proof} First we assume that $\alpha\neq -2$.
Since $\beta=-1$ and $\alpha\neq \pm 2$, $b=a^{-1}\neq
\pm 1$, so that $\Omega_1=du-aud$ and $\Omega_2=du-a^{-1} ud$ are 
distinct elements of $A$.

We consider each of the four different groups $Q_5$, $Q_6$, $Q_7$, 
and $Q_8$ for $A=A(\alpha, -1)$
when $\alpha\neq -2$. Each of these groups preserves the filtration
of Lemma \ref{zzlem7.2}(3), defined by $F(n)=V^{-n}$, where $V=kd+ku+k\Omega_1
+k\Omega_2$, and the associated graded ring is isomorphic to
$$B:=(k[t_1,t_2]/(t_1t_2))\langle \Omega_1,\Omega_2\rangle
/(relations),$$
where $relations$ are
$$t_1\Omega_1 =a^{-1}\Omega_1 t_1,\; \Omega_1 t_2=a^{-1}t_2 \Omega_1,
\; t_1\Omega_2
=a\Omega_2 t_1, \; \Omega_2 t_2=at_2\Omega_2,\; [\Omega_1,\Omega_2]=0.$$
By Proposition \ref{zzpro0.4}(1), in each of the four cases it 
suffices to show that $B^{\Phi(H)}$
is a gci.\\
~\\
{\bf Case (i): $H=Q_5$.}\\
Recall that $Q_5\subset U$ is the binary dihedral group $BD_{4n}$
generated $s_1$ and $c_{\epsilon}$, where $\epsilon$ is a primitive
$2n$th root of unity. Here we also include the abelian case $BD_{4}$.
Let $G$ be the subgroup of $\widehat{H}:=\Phi(H)$ generated by
$\widehat{c_{\epsilon}}$ which sends
$$t_1\to \epsilon t_1, \quad t_2\to \epsilon^{-1} t_2, \quad
\Omega_1\to \Omega_1, \quad \Omega_2\to \Omega_2.$$
It is easy to see that, by setting $x_i=t_i^{2n}$ for $i=1,2$, $B^G$ is
isomorphic to
$$D:=(k[x_1,x_2]/(x_1x_2))\langle \Omega_1,\Omega_2\rangle
/(relations)$$
where $relations$ are
$$x_1\Omega_1 =a^{-2n}\Omega_1 x_1,\; \Omega_1 x_2=a^{-2n}x_2 \Omega_1,
\; x_1\Omega_2
=a^{2n}\Omega_2 x_1, \; \Omega_2 x_2=a^{2n}x_2\Omega_2,$$
and
$$[\Omega_1,\Omega_2]=0.$$
It is easy to check that $D\cong T_{a^{2n}}/(x_1x_2)$, where
$T_{a^{2n}}$ was studied in
Example \ref{zzex5.9}. By identifying $\Omega_i\in D$ with $y_i\in
T_{a^{2n}}$ we have $D=T_{a^{2n}}/(x_1x_2)$.
The quotient group $\widehat{H}/G$ is generated by the involution
$\widehat{s_1}$, which sends
$$x_1\to x_2, \quad x_2\to x_1, \quad
\Omega_1\to a\Omega_2, \quad \Omega_2\to a^{-1}\Omega_1.$$
In particular, $\widehat{s_1}$ preserves the relation $x_1x_2=0$.
Therefore
$$B^{\Phi(H)}=D^{\widehat{s_1}}=(T_{a^{2n}}/(x_1x_2))^{\widehat{s_1}}
\cong (T_{a^{2n}})^{\widehat{s_1}}/(x_1x_2),$$
where the last equation follows from Lemma \ref{zzlem5.4}(2). Therefore
it suffices to show that $(T_{a^{2n}})^{\widehat{s_1}}$ is a gci.
Now $\widehat{s_1}=\zeta_2$ as defined in Example \ref{zzex5.9} for $c=a$.
By Example \ref{zzex5.9}(ii), $(T_{a^{2n}})^{\widehat{s_1}}$ is a cci,
whence a gci by Lemma \ref{zzlem5.3}(1). This finishes the first case.\\
~\\
{\bf Case (ii): $H=Q_6$.}\\
The first part of argument is similar to the
argument for $H=Q_5$ (where we replace $2n$ by $n$). The
quotient group $\widehat{H}/G$ is generated by the involution
$\widehat{s}$, which sends
$$x_1\to x_2, \quad x_2\to x_1, \quad  \Omega_1\to -a\Omega_2, 
\quad \Omega_2\to -a^{-1}\Omega_1.$$
So the difference from the previous case 
is that $\widehat{s_1}=\zeta_2$ for $c=-a$ (not $a$).
Hence $A^{Q_6}$ is a gci.\\
~\\
{\bf Case (iii): $H=Q_7$.}\\
The first part of the proof is similar to the argument
above. Recall that $Q_7\subset U$ is generated $d_1$, $s$ and
$c_{\epsilon}$, where $\epsilon$ is a primitive $2n$th root of unity.
We use again the filtration of Lemma \ref{zzlem7.2}(3), and let $B$ be the
associate graded algebra. Let $G$ be the subgroup of $\widehat{H}:=\Phi(H)$ 
generated by $\widehat{c_{\epsilon}}$, which sends
$$t_1\to \epsilon t_1, \quad t_2\to \epsilon^{-1} t_2, \quad
\Omega_1\to \Omega_1, \quad \Omega_2\to \Omega_2.$$
Let $x_i=t_i^{2n}$ for $i=1,2$. Then $B^G=T_{a^{2n}}/(x_1x_2)$
by identifying $x_i\in B^G$ with $x_i\in T_{a^{2n}}$
and $\Omega_i\in B^G$ with $y_i\in T_{a^{2n}}$ for $i=1,2$.
The quotient group $W:=\widehat{H}/G$ is generated by the involutions
$$\widehat{s}: x_1\to x_2, \quad x_2\to x_1, \quad
\Omega_1\to -a\Omega_2, \quad \Omega_2\to -a^{-1}\Omega_1,$$
and
$$\widehat{d_1}:x_1\to x_1, \quad x_2\to x_2, \quad
\Omega_1\to -\Omega_1, \quad \Omega_2\to -\Omega_2.$$
Note that $\widehat{d_1}=\zeta_1$ and $\widehat{s}=\zeta_2$.
Therefore
$$B^{\Phi(H)}=(B^G)^W=(T_{a^{2n}}/(x_1x_2))^{W}
\cong (T_{a^{2n}})^{W}/(x_1x_2),$$
where the last equation follows from Lemma \ref{zzlem5.4}(2).
By Example \ref{zzex5.9}(iii) $(T_{a^{2n}})^{W}$ is a cci,
whence a gci by Lemma \ref{zzlem5.3}(1).
Therefore $B^{\Phi(H)}$ is a gci, and $A^{Q_7}$ is a gci.\\
~\\
{\bf Case (iv): $H=Q_8$.}\\
 The argument is similar, but a bit more
complicated.

Recall that $Q_8\subset U$ is generated $s$ and $c_{\epsilon,-}$,
where $\epsilon$ is a primitive $4n$th root of unity. We again use the
filtration given in Lemma \ref{zzlem7.2}(3), and let $B$ be the
associated graded algebra.
Let $G$ be the subgroup of $\widehat{H}:=\Phi(H)$ generated by
$\widehat{c_{\epsilon^2}}=\widehat{c_{\epsilon,-}^2}$, which sends
$$t_1\to \epsilon^2 t_1, \quad t_2\to \epsilon^{-2} t_2, \quad
\Omega_1\to \Omega_1, \quad \Omega_2\to \Omega_2.$$
Let $x_i=t_i^{2n}$ for $i=1,2$. Then $B^G=T_{a^{2n}}/(x_1x_2)$
by identifying $x_i\in B^G$ with $x_i\in T_{a^{2n}}$
and $\Omega_i\in B^G$ with $y_i\in T_{a^{2n}}$ for $i=1,2$.
The quotient group $W:=\widehat{H}/G$ is generated by the involutions
$$\phi_1:=\widehat{s}: x_1\to x_2, \quad x_2\to x_1, \quad
\Omega_1\to -a\Omega_2, \quad \Omega_2\to -a^{-1}\Omega_1,$$
and
$$\phi_2:=\widehat{c_{\epsilon,-}}:x_1\to -x_1, \quad x_2\to -x_2, \quad
\Omega_1\to -\Omega_1, \quad \Omega_2\to -\Omega_2.$$ Hence both $\phi_1$ and
$\phi_2$ preserve the relation $x_1x_2=0$. To show $A^{Q_8}$ is a
gci, it suffices to show that $B^{\Phi(Q_8)}$ is a gci, or,
equivalently, that $(B^G)^W=(T_{a^{2n}}/(x_1x_2))^W$ is. By Lemma
\ref{zzlem5.4}(3), we need to show only that $T_{a^{2n}}^W$ is a gci. Set
$q=a^{2n}$. We need to show that $T_q^{W}$ is a gci. Note that
$\phi_1=\zeta_1$ in Example \ref{zzex5.9}. By Example \ref{zzex5.9}
case (ii), $R:=T_q^{\zeta_2}$ is isomorphic to $C/(\Theta)$ or
$C/(\Theta_1,\Theta_2)$. The proofs for these two cases are similar,
so let us assume that $R\cong C/(\Theta_1,\Theta_2)$, where $C$ and
$\Theta_i$ are given in Example \ref{zzex5.9}. Now $\phi_2$ induces
an automorphism of $R$ which is equal to
$$\eta_2: X_1\to -X_1, \quad X_2\to X_2, \quad Y_1\to -Y_1, \quad Y_2\to Y_2, \quad Z_1\to
Z_1, \quad Z_2\to Z_2$$ where $X_1,X_2,Y_1,Y_2, Z_1,Z_2$ are the
generators of $R$ and $C$. Further, $\eta_2$ preserves $\Theta_1$ and
$\Theta_2$. By Lemma \ref{zzlem5.4}(3), it is enough to show that
$C^{\eta_2}$ is a gci. One can check that fact directly, or use the
filtration to reduce to a simpler case. To illustrate the argument, let us
use the filtration. We need to return to the algebra $C$ defined in
Example \ref{zzex5.9}. One can check easily that $X_2$ is a regular
normal element. Define
$$F(n)= \begin{cases} x_2^n C, & n\geq 0\\ C & n\geq 0\end{cases}.$$
Then $\eta_2$ preserves the filtration. By Proposition
\ref{zzpro0.4}(1), it suffices to show that $(gr_F C)^{\eta_2}$ is
a gci. The filtration $F$ gives us $\gr_F C=C/(X_2)[X_2;\sigma]$,
for some automorphism $\sigma$. By Lemma
\ref{zzlem5.4}(1), it is enough to show that $(C/(X_2))^{\eta_2}$ is
a gci, where we re-use $\eta_2$ for the automorphism of $C/(X_2)$
induced by $\eta_2$. We need to return to the algebra in Example
\ref{zzex5.9} to see that $Y_2$ is a regular normal element in
$C/(X_2)$. The same argument shows that we need to show that
$(C/(X_2,Y_2))^{\eta_2}$ is a gci. By repeating this argument, the assertion
eventually follows from the fact that
$(C/(X_2,Y_2,Z_1,Z_2))^{\eta_2}$ is a gci (which is isomorphic to
the second Veronese subring of $k[X_1,Y_1]$). This finishes the case
when $\alpha\neq -2$.\\
~\\
Next we consider the case when $\alpha=-2$, and $A=A(-2,-1)$. In this case, $a=b=-1$, and we
let $\Omega=du+ud$, and  use the filtration defined in Lemma
\ref{zzlem7.2}(2), which is $H$-stable for each of the four groups.
The associated graded ring $B:=\gr_F A$ is isomorphic to $k_{-1}[t_1,t_2,t_3]$, where $t_1:=\widehat{u}$,
$t_2:=\widehat{d}$ and $t_3:=\widehat{\Omega}$. By Proposition
\ref{zzpro0.4}(1), we need to show only that
$B^{\Phi(H)}$ is a gci. Again we have four groups to consider.\\
~\\
{\bf Case (i): $H= Q_5 = BD_{4n}$.}\\
The group $Q_5$ is generated by $s_1$ and
$c_{\epsilon}$, where $\epsilon$ is a primitive $2n$th root of unity.
Let $G$ be the subgroup of $\widehat{H}:=\Phi(H)$ generated by
$\widehat{c_{\epsilon}}$, which sends
$$t_1\to \epsilon t_1, \quad t_2\to \epsilon^{-1} t_2, \quad t_3\to t_3.$$
Then $B^G$ is isomorphic to
$$D:=k[x_1,x_2, x_2]/(x_1x_2-(-1)^{{2n \choose 2}}x_3^{2n})\otimes k[t_3],$$
where $x_1=t_1^{2n}$, $x_2=t_2^{2n}$ and $x_3=t_1t_2$. The quotient
group $\widehat{H}/G$ is generated by the involution $\widehat{s_1}$,
which sends
$$x_1\to x_2, \quad x_2\to x_1, \quad x_3\to x_3, \quad t_3\to -t_3.$$
Now $h:=\widehat{s_1}$ satisfies the hypotheses of Lemma \ref{zzlem5.5}.
Therefore $D^h$ is a gci, and since $B^{\widehat{H}}=(B^G)^h=D^h$, we have $B^{\widehat{H}}$, and hence $A^H$, is a gci.\\
~\\
{\bf Case (ii): $H=Q_6$.}\\
 The first part of argument is similar to the
argument for $H=Q_5$ (replacing $2n$ by $n$). Let
$B=\gr_F A=(k_{-1}[t_1,t_2])[t_3,\sigma]$. Then every $\phi\in H$
preserves $\Omega$, we have $B^{\Phi(H)}=( k_{-1}[t_,t_2]^H)[t_3,\sigma]$.
Since $H$ has trivial homological determinant when acting on
$k_{-1}[t_,t_2]$, Theorem \ref{zzthm0.1} says that
$k_{-1}[t_,t_2]^H$ is a cci. Thus $B^{\Phi(H)}=(
k_{-1}[t_,t_2]^H)[t_3,\sigma]$ is a cci, whence a gci, and $A^H$ is a gci.\\
~\\
{\bf Case (iii):  $H=Q_7$.}\\
Recall that $Q_7\subset U$ is generated by
$d_1$, $s$ and $c_{\epsilon}$, where $\epsilon$ is a primitive $2n$th
root of unity. Let $B=\gr_F A=k_{-1}[t_1,t_2,t_3]$ as above. Let $G$
be the subgroup of $\Phi(H)$ generated by $\widehat{c_{\epsilon}}$.
Then $B^G$ is isomorphic to
$$D:=k[x_1,x_2, x_2]/(x_1x_2-(-1)^{{2n \choose 2}}x_3^{2n})\otimes k[t_3],$$
where $x_1=t_1^{2n}$, $x_2=t_2^{2n}$ and $x_3=t_1t_2$. The quotient
group $W:=\widehat{H}/G$ is generated by the involutions
$$\phi_1:=\widehat{s}: x_1\to x_2, \quad x_2\to x_1, \quad x_3\to -x_3, 
\quad t_3\to t_3,$$
and
$$\phi_2:=\widehat{d_1}: x_1\to x_1, \quad x_2\to x_2, \quad x_3\to -x_3, 
\quad t_3\to -t_3.$$
Since both $\phi_1$ and $\phi_2$ preserve the relation
$f=x_1x_2-(-1)^{{2n \choose 2}}x_3^{2n}$, we can lift $\phi_1$ and
$\phi_2$ to the level of $D_1:=k[x_1,x_2,x_3,t_3]$. By Lemma
\ref{zzlem5.4}(3), it suffices to show that $D_1^W$ is a gci. Now
$D_1$ is commutative, $W$ is a group of order 4, so it is an easy
computation to show that $D_1^W$ is a cci (see also \cite{G2, Na}), 
whence a gci .\\
~\\
{\bf Case (iv): $H=Q_8$.}\\
 Recall that $Q_8\subset U$ is generated by $s$
and $c_{\epsilon,-}$, where $\epsilon$ is a primitive $4n$th root of
unity. Let $B=\gr_F A=k_{-1}[t_1,t_2,t_3]$ where $t_1:=\widehat{u}$,
$t_2:=\widehat{d}$ and $t_3:=\widehat{\Omega}$. Let $G$ be the
subgroup of $\Phi(H)$ generated by $\widehat{c_{\epsilon^2}}$. Then
$B^G$ is isomorphic to
$$D:=k[x_1,x_2, x_2]/(x_1x_2-(-1)^{{2n \choose 2}}x_3^{2n})\otimes k[t_3],$$
where $x_1:=t_1^{2n}$, $x_2:=t_2^{2n}$ and $x_3:=t_1t_2$. The quotient
group $W:=\widehat{H}/G$ is generated by the involutions
$$\phi_1:=\widehat{s}: x_1\to x_2, \quad x_2\to x_1, \quad x_3\to -x_3, 
\quad t_3\to t_3$$
and
$$\phi_2:=\widehat{c_{\epsilon,-}}: x_1\to -x_1, \quad x_2\to -x_2, 
\quad x_3\to -x_3, \quad t_3\to -t_3.$$
Both $\phi_1$ and $\phi_2$ preserves the relation
$f:=x_1x_2-(-1)^{{2n \choose 2}}x_3^{2n}$.  We can lift $\phi_1$ and
$\phi_2$ to the level of $D_1:=k[x_,x_2,x_3,t_3]$. By Lemma
\ref{zzlem5.4}(3), it suffices to show that $D_1^W$ is a gci. Now
$D_1$ is commutative, $W$ is a group of order 4, and it is an easy
computation to show that $D_1^W$ is a cci (see also \cite{G2, Na}), 
whence a gci. This case finishes the
argument when $\alpha=-2$.

Combining all the cases above, the proof is now complete.
\end{proof}

\section{Proof of Theorem \ref{zzthm0.3}}
\label{zzsec9}

We are now ready to prove Theorem \ref{zzthm0.3}.

\begin{proof}[Proof of Theorem \ref{zzthm0.3}] 
Recall that $A=A(\alpha,\beta)$ is a noetherian
graded down-up algebra, and $H$ is a finite subgroup of $\Aut(A)$. By
Lemma \ref{zzlem5.3}(2), if $A^H$ is a gci, then $A^H$ is a cyclotomic 
Gorenstein. To prove
Theorem \ref{zzthm0.3} one needs to show that, if $A^H$ is
cyclotomic Gorenstein, then $A^H$ is a gci and $H$ is generated by
bireflections. This statement is equivalent to the following claim.
\begin{enumerate}
\item[Claim 1:]
{\it For every $H$, either $A^H$ is not cyclotomic Gorenstein, or
both $A^H$ is a gci and $H$ is generated by bireflections.}
\end{enumerate}

If $H=Q_3$, or is conjugate to $Q_3$, and $n>1$, then by Lemma 
\ref{zzlem8.4} there
is an $A$ such that $Q_3\subset \Aut(A)$ and $A^{Q_3}$ is not
cyclotomic Gorenstein. By Lemma \ref{zzlem7.3}(4), if $Q_3\subset \Aut(B)$
for any other noetherian graded down-up algebra $B$, then $B^{Q_3}$
is not cyclotomic Gorenstein. By the same reasoning, $B^{Q_3}$ is cyclotomic
Gorenstein when $n=1$, and $Q_3$ is generated by bireflections. 
Similarly, if $Q_4\subset \Aut(A)$ for
any noetherian graded down-up algebra $A$, then $A^{Q_4}$ is not
cyclotomic Gorenstein [Lemma \ref{zzlem8.7}].

Now Claim 1 is equivalent to the following claim.
\begin{enumerate}
\item[Claim 2:]
{\it For every $H$, if $H$ is not conjugate to $Q_3$ or $Q_4$, then
$A^H$ is a gci and $H$ is generated by bireflections.}
\end{enumerate}

We will analyze $A^H$ dependent on $\Aut(A)$, which is dependent on
the parameters $(\alpha, \beta)$. First, we present a lemma that 
deals with some subgroups of
$SL_2(k)$.

\begin{lemma}
\label{zzlem9.1} Let $\Omega =du-aud$ where $a\neq -1$, and let $F$
be the filtration defined in Lemma {\rm{\ref{zzlem7.2}(2)}}. Let $H\subset
SL_2(k)$ be a finite subgroup. Suppose $g(\Omega)=\Omega$ for all
$g\in H$. Then $A^H$ is a gci.
\end{lemma}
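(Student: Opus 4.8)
The plan is to push the problem down to the associated graded algebra and then strip off the normal element $\Omega$ one step at a time. First I would check that $H$ preserves the filtration $F$ of Lemma \ref{zzlem7.2}(2): each $g\in H$ acts linearly on the degree-one space $A_1=ku+kd$ and fixes $\Omega$ by hypothesis, so $g(V)=V$ for $V=k+kd+ku+k\Omega$, whence $g(F(-n))=g(V^n)=V^n=F(-n)$ for all $n$. By Lemma \ref{zzlem7.2}(2) the associated graded algebra $B:=\gr_F A$ is the iterated Ore extension $k\langle d,u,\Omega\rangle/(d\Omega-b\Omega d,\ \Omega u-bu\Omega,\ du-aud)$, which is a noetherian AS regular (hence AS Cohen--Macaulay) domain of dimension three with noetherian Rees ring. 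Writing $\widehat H=\Phi(H)\subset\Aut(B)$ for the induced group, Proposition \ref{zzpro0.4}(1) reduces the statement to showing that $B^{\widehat H}$ is a gci.

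Next I would exhibit $B$ as an Ore extension over a two-dimensional piece. Let $C_0\subset B$ be the subalgebra generated by $u$ and $d$; the relation $du=aud$ gives $C_0\cong k_a[u,d]$, which is AS regular of dimension two (note $a\neq 0$ since $\beta\neq0$). The remaining relations say precisely that $B\cong C_0[\Omega;\tau]$, where $\tau\in\Aut(C_0)$ is determined by $\Omega c=\tau(c)\Omega$, i.e. $\tau(u)=bu$ and $\tau(d)=b^{-1}d$. The group $\widehat H$ preserves $C_0$ (acting on $u,d$ through the matrices of $H\subset SL_2(k)$) and fixes $\Omega$. The crux of the argument — the step I expect to be the main obstacle to phrase cleanly — is that every $\widehat g$ commutes with $\tau$. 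This, however, falls out directly from $\widehat g(\Omega)=\Omega$: for $c\in C_0$,
$$\tau(\widehat g(c))\,\Omega=\Omega\,\widehat g(c)=\widehat g(\Omega c)=\widehat g(\tau(c)\Omega)=\widehat g(\tau(c))\,\Omega,$$
and cancelling the regular element $\Omega$ (recall $B$ is a domain) yields $\tau\widehat g=\widehat g\tau$ on $C_0$. Thus $\tau$ commutes with all of $\widehat H$ and $\widehat H$ fixes $\Omega$, so Lemma \ref{zzlem5.4}(1) applies and reduces the problem to showing that $C_0^{\widehat H}$ is a gci.

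Finally I would identify $C_0^{\widehat H}$ as a hypersurface. Here $C_0=k_a[u,d]$ is a noetherian AS regular algebra of dimension two generated in degree one, and $\widehat H$ acts on it through $SL_2(k)$; in particular the homological determinant of the $\widehat H$-action is trivial, since for $g\in SL_2(k)$ it equals $\det g=1$ (by Lemma \ref{zzlem1.6}(2) in the diagonal cases, and directly in the commutative case $a=1$). By Theorem \ref{zzthm0.1}, $C_0^{\widehat H}$ is therefore a hypersurface, hence a cci, hence a gci by Lemma \ref{zzlem5.3}(1). Combining the three reductions gives that $B^{\widehat H}$ is a gci, and then Proposition \ref{zzpro0.4}(1) yields that $A^H$ is a gci, as required. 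The hypothesis $a\neq -1$ plays only a bookkeeping role, keeping $C_0$ and its induced $SL_2(k)$-action within the two-dimensional cases treated in Section \ref{zzsec2}; the excluded value $a=-1$ (which forces $A(-2,-1)$, where $\Omega_1=\Omega_2$) is handled by a separate filtration elsewhere.
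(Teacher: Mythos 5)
Your proposal is correct and follows essentially the same route as the paper's proof: reduce via the $H$-stable filtration to $\gr_F A\cong k_a[u,d][\Omega;\tau]$, strip off $\Omega$ using Lemma \ref{zzlem5.4}(1), and conclude from Theorem \ref{zzthm0.1} that $k_a[u,d]^{H}$ is a hypersurface, hence a gci. Your explicit verification that $\tau$ commutes with $\widehat H$ (by cancelling the regular element $\Omega$) is a detail the paper leaves implicit, but it is the same argument.
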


\begin{proof} Since $g(\Omega)=\Omega$ for all $g\in H$, the filtration
given in \ref{zzlem7.2}(2) is $H$-stable. Since $B:=\gr_F A$ is a
skew polynomial ring $k_{p_{ij}}[t_1,t_2,t_3]$ that is generated by
$t_1=\widehat{d}, t_2=\widehat{u}$ and $t_3=\widehat{\Omega}$, the
$\widehat{H}$-action on $B$ fixes $t_3$. By Lemma
\ref{zzlem5.4}(1), it suffices to that $k_a[t_1,t_2]^H$ is a gci.
Since $H\subset SL_2(k)$ and since $a\neq -1$, $H\subset
\AutSL(k_a[t_1,t_2])$. By \cite[Theorem 3.3]{JoZ}, $k_a[t_1,t_2]^H$
is AS Gorenstein. By Theorem \ref{zzthm0.1}, $k_a[t_1,t_2]^H$ is a
gci, and therefore the assertion follows. 
\end{proof}

We note that Lemma \ref{zzlem9.1} applies to
all values of $\alpha$ and $\beta$, except in the case that all roots
of the ``character polynomial" are $-1$, which happens when
$$x^2-\alpha x  -\beta = x^2 +2x +1,$$
hence when $\alpha = -2$ and $\beta = -1$.  The argument in the proof of Lemma \ref{zzlem9.1}
also applies to the case when
$H \subset O \cap SL_2(k)$ and $a= -1$.\\

We now use the results we have assembled to prove Theorem \ref{zzthm0.3}.\\
~\\
{\bf Case 1: $(\alpha, \beta)=(2,-1)$ or $(0,1)$.}
In these two case  $\Aut(A)=GL_2(k)$ by Lemma \ref{zzlem7.2}(2).  In
the first case the roots of the character equation are $a=b=1$, and in the second case they are $a=1$ and $b=-1$\\

Let $\Omega=du-ud$. Then $g(\Omega)=(\det g)\Omega$ for all $g\in
GL_2(k)$. If $H$ is a subgroup of $SL_2(k)$,
Lemma \ref{zzlem9.1} shows that $A^H$ is always a gci, and Lemma \ref{zzlem7.3}(3)
shows that all elements ($\neq \mathbb{I}$) of $H$ are bireflections, so
Claim 2 follows in this case. If
$H$ is not a subgroup of $SL_2(k)$, by Lemma \ref{zzlem7.3}(2), $H$
satisfies the exact sequence \eqref{E3.0.1}. If $G:=H\cap SL_2(k)$
is cyclic, then $H$ is isomorphic to a group given in $(A_{n,1})$,
$(A_{n,2})$ or $(A_{n,5})$. (Note that the groups occurring in
$(A_{n,3})$ and $(A_{n,4})$ are $Q_3$ and $Q_4$, respectively, so have been
eliminated). For any of the cases $(A_{n,i})$ for $i=1,2,5$, the
proof is very similar, so we do only one case, say $(A_{n,5})$. The
filtration given in Lemma \ref{zzlem7.2}(2) is $H$-stable for $\Omega = du-ud$
(since $a=1$ is a root of the character equation in both cases, and $g(\Omega)=(\det g)\Omega$). By
Proposition \ref{zzpro0.4}(1), it suffices to show that
$(\gr_F A)^{\Phi(H)}$ is a gci. Now  $\gr_F A=(k[t_1,t_2])[t_3;\sigma]$
and $(\gr_F A)^{\Phi(G)}=(k[t_1,t_2]^G)[t_3;\sigma]$. Note that  $k[t_1,t_2]^G$
is the algebra $S$ given in the discussion of case $(A_{n,5})$. Let
$g\in H\setminus G$, and let $h$ be the induced action of $g$ on $S$.
Then $h$ extends to an action on $S[t_3;\sigma]$ by $h(t_3)=-t_3$.
One can check that $\widehat{g}=h$.  By Lemma \ref{zzlem3.3}, the
hypotheses in Lemma \ref{zzlem5.5} hold. Hence, by Lemma
\ref{zzlem5.5}, $(S[t_3;\sigma])^h$ is a gci. Finally,
$$(\gr_F
A)^{\Phi(H)}=((\gr_F A)^{\Phi(G)})^h=(S[t_3;\sigma])^h,$$ which is
now a gci. Combining the above, we have that $A^H$ is a gci when $H$
is in $(A_{n,5})$.

A similar argument, using Lemmas \ref{zzlem3.3} and \ref{zzlem5.5},
 works when $G$ is of type $D_n$, $E_6$, $E_7$, or
$E_8$, finishing Case 1.\\
~\\
{\bf Case 2: $H\subset O$.} Then $H$ is listed as in Lemma \ref{zzlem3.4}.\\

Since $H$ is not conjugate to $Q_3$ and $Q_4$. One needs to
consider only $Q_1$ and $Q_2$. An argument as in the proof of
Lemma \ref{zzlem9.1} establishes the result for $Q_1$, and an argument as in the proof of
case $(A_{n,5})$ establishes the theorem for $Q_2$.

If $\beta\neq -1$, then $\Aut(A)=O$, and hence the theorem is established
when $\beta\neq -1$.\\
~\\
{\bf Case 3: $\beta=-1$ and $H\subset U$.}\\
By Lemma \ref{zzlem3.5}, $H$ is
isomorphic to $Q_5$, $Q_6$, $Q_7$ or $Q_8$. The case where $\alpha=2$ is
covered in Case 1.  The case where $\alpha\neq  2$ is covered in
Proposition 8.8.\\

Hence the proof is complete in all cases.
\end{proof}

\section*{Acknowledgments}
The authors thank Michael Mossinghoff from Davidson College for the
proof of Lemma \ref{zzlem8.6}.  Wake Forest University undergraduate
Brian Smith computed several examples that were useful in this
work. E. Kirkman was partially supported by grant \#208314 from 
the Simons Foundation. J.J. Zhang was partially supported by the 
US National Science Foundation: NSF grant DMS 0855743.  

\end{document}